\newtheorem{theorem}{Theorem}[section]
\newtheorem{proposition}{Proposition}[section]
\newtheorem{lemma}{Lemma}[section]
\newtheorem{corollary}{Corollary}[section]
\theoremstyle{definition}
\newtheorem{definition}{Definition}[section]
\newtheorem{remark}{Remark}
\newtheorem{assumptions}{Hypothesis}[section]
\newcommand{\ds}{\displaystyle}
\def\supp{\mathop{\rm supp}\nolimits}
\title{Null controllability for a degenerate population equation with memory}
\author{{\sc Brahim Allal}\thanks{The author thanks the MAECI (Ministry of Foreign
Affairs and International Cooperation, Italy) for funding that
greatly facilitated scientific collaboration between Universit\'e Hassan $1^{er}$
(Morocco) and Universit\`{a} di Bari Aldo Moro (Italy).}\\
Facult\'e des Sciences et Techniques\\
Universit\'e Hassan 1er\\
Laboratoire MISI, B.P. 577\\
Settat 26000, Morocco
\\ email: b.allal@uhp.ac.ma
\\
{\sc Genni Fragnelli}\thanks{The author is a member of the Gruppo Nazionale per l'Analisi Ma\-te\-matica, la Probabilit\`a e le loro Applicazioni (GNAMPA) of the
Istituto Nazionale di Alta Matematica (INdAM) and she is supported by the FFABR {\it Fondo
per il finanziamento delle attivit\`a base di ricerca} 2017, by the INdAM - GNAMPA Project 2020 {\it Problemi inversi e di controllo per equazioni di evoluzione e loro applicazioni},  by  Fondi di Ateneo 2015/16, 2017/2018 of the University of  Bari {\em Problemi differenziali non linearii} and by PRIN 2017-2019 {\it Qualitative and quantitative aspects of nonlinear PDEs}.}\\
Dipartimento di Matematica\\ Universit\`{a} di Bari "Aldo Moro"\\
Via
E. Orabona 4\\ 70125 Bari - Italy\\ email: genni.fragnelli@uniba.it\\
{\sc  Jawad Salhi}\\
Moulay Ismail University of Meknes,\\
FST Errachidia, MAIS Laboratory, MAMCS Group,\\
P.O. Box 509, Boutalamine 52000, Errachidia, Morocco\\
email: sj.salhi@gmail.com}
\date{}
\begin{document}
\maketitle

\begin{abstract}
In this paper we consider the null controllability for a population model depending on time, on space and on age. Moreover, the diffusion coefficient degenerate at the boundary of the space domain. The novelty of this paper is that for the first time we consider the presence of a memory term, which makes the computations more difficult. However, under a suitable condition on the kernel we deduce a null controllability result for the original problem via new Carleman estimates for the adjoint problem associated to a suitable nonhomogeneous parabolic equation.
\end{abstract}
\section{Introduction}\label{Sect_Introduction}
In this work we are concerned with the null controllability result for a degenerate parabolic population equation with memory by a distributed control force. More precisely, we consider the following controlled system:
\begin{equation}\label{sys-memory1}
\left\{
\begin{array}{lll}
\displaystyle y_t  + y_a- (k(x)  y_x )_x + \mu(t,a,x) y= \int\limits_0^t b(t,s,a,x)  y(s,a,x) \, ds + f(t,a,x) \chi_\omega & \text{in } Q, \\
y(t,a,1)= y(t,a,0)=0, & \text{in } Q_{T,A},  \\
y(0,a,x)= y_{0}(a,x),  &  \text{in }  Q_{A,1},\\
y(t,0,x)=\int_0^A \beta(a,x) y(t,a,x) da, & \text{in } Q_{T,1},
\end{array}
\right.
\end{equation}
where $Q:=(0,T)\times(0,A)\times(0,1)$, $Q_{T,A} := (0,T)\times (0,A)$, $Q_{A,1}:=(0,A)\times(0,1)$ and
$Q_{T,1}:=(0,T)\times(0,1)$. Moreover, $y(t,a,x)$  is the distribution of certain individuals  at location $x \in (0,1)$, at time $t\in(0,T)$, where $T$ is fixed, and of age $a\in (0,A)$. $A$ is the maximal age of life, while $\beta$ and $\mu$ are the natural fertility and the natural death rates, respectively. Thus, the formula $\int_0^A \beta y da$ denotes the distribution of newborn individuals at time $t$ and location $x$; the function $b=b(t,s,a, x)\in L^\infty((0, T) \times Q)$ is a memory kernel. In the model $ \chi_\omega$ is the characteristic function of  the control region $\omega\subset(0,1)$ and $f$ is the control. Finally, $k$ is the dispersion coefficient and we assume that it depends on the space variable $x$ and degenerates at the boundary of the state space. In particular, we say that the function $k$ is 
\begin{definition}\label{Wdef} {\bf Weakly degenerate (WD)}  if $k \in  W^{1,1}([0,1])$,
\[
 k>0 \text{ in }(0,1)  \text{ and } k(0)=k(1)=0,
 \]
and there exist $M_1, M_2\in (0,1)$ such that
$x k'(x)  \le M_1k(x) $ and $(x-1)k'(x) \le M_2 k(x)$ for a.e. $x\in [0,1]$.
 \end{definition} 

or 
\begin{definition}\label{Sdef} {\bf Strongly degenerate (SD)}  if $k \in  W^{1,\infty}([0,1])$,
\[
 k>0 \text{ in }(0,1)  \text{ and } k(0)=k(1)=0,
 \]
and there exist $M_1, M_2\in [1,2)$ such that
$x k'(x)  \le M_1k(x) $ and $(x-1)k'(x) \le M_2 k(x)$ for a.e. $x\in [0,1]$.
 \end{definition} 
 For example, as $k$ one can consider $k(x)=x^{M_1}(1-x)^{M_2}$, with $M_1, M_2 >0$. Clearly, we say that $k$ is weakly or strongly degenerate only at $0$ if Definition \ref{Wdef} is satisfied only at $0$, i.e. $k \in  W^{1,1}([0,1])$, $
 k>0 \text{ in }(0,1], k(0)=0
$
and, there exists $M_1\in (0,1)$  or $M_1\in [1,2)$ such that
$x k'(x)  \le M_1k(x) $ for a.e.  $x\in [0,1]$.  Analogously at $1$.\medskip

Recently, population models have been widely investigated by many authors from many points of view. Mostly in the last period it is become clear how  the so called net reproduction rate $R_0$ affects the asymptotic behavior of the solution of the population. In particular, if $R_0>1$, the solution growths exponentially; if $R_0<1$, the solution decays exponentially; if $R_0=1$, the solution tends to the steady state solution.   Clearly, if $R_0>1$ and the system represents the distribution of a damaging insect population or of a pest population, it is very worrying.  It is enough to think what is happening in this period with the pandemic; this fact explains the reason why the governments try to keep the net reproduction rate below $1$. 
For this reason, even before the pandemic, great attention has been given to null controllability of damage population.

In the case that the dispersion coefficient 
{\it $k$ is a constant or a positive function} and the memory kernel is zero, null controllability for \eqref{sys-memory1} is studied, for example, in \cite{An} (see \cite{webb} for the well-posedness). If {\it $k$ degenerates at the boundary or at an interior point of the domain} and  again $b\equiv 0$ we refer, for example, to \cite{Alabau2006}, \cite{fm}, \cite{fm1} and, if $\mu$ is singular at the same point of $k$,  to \cite{fm2}, \cite{fm_hatvani}, \cite{fm_opuscola} and recently to \cite{yamamoto}. In particular, in \cite{yamamoto}, \eqref{sys-memory1} depends also on the size $\tau \in (\tau_1, \tau_2)$ and on a function $g(\tau)$, which is the growth modulus (i.e. $\int_{\tau_1}^{\tau_2} \frac{1}{ g(\tau)} d\tau$ is a spending time to grow the individual from size $\tau_1$ to size $\tau_2$). 

To our best knowledge, \cite{aem} is the first paper where  the dispersion coefficient can degenerate at the boundary of the domain (for example $k(x) = x^\alpha,$  being $x \in (0,1)$ and $\alpha >0$). Using Carleman estimates for the adjoint problem, the authors prove null controllability for \eqref{sys-memory1} under the condition $T\ge A$. 
The case $T<A$ is considered  in \cite{idriss}, \cite{em}, \cite{fJMPA} and \cite{fAnona}. In particular, in \cite{em} the problem is in {\it divergence form}, the function $k$ degenerates only at a point of the boundary and
the birth rate $\beta$ belongs to $C^2(Q)$ (necessary requirement in the proof of \cite [Proposition 4.2]{em}). A more general result is obtained in \cite{fAnona} where $\beta$ is only a continuous function, but $k$ can degenerate at both extremal points. In \cite{idriss} the problem is always  in {\it divergence form} and $k$ degenerates at an interior point $x_0$ and it belongs to $C[0,1]\cap C^1([0,1]\setminus\{x_0\})$; see also the recent paper \cite{f_irena}, where the functions are less regular and both cases $T<A$ and $T >A$ are considered.
The {\it non divergence form} is considered  in \cite{fJMPA}, where $k$ can degenerate at
 a one point of the boundary domain or in an interior point (for a cascade system we refer to \cite{bf}). We underline again that in all the previous papers, the memory kernel is zero.

The first  results on null controllability when $k=b=1$ and $y$ is independent of $a$ can be found in \cite{Guerrero2013}. In particular, S. Guerrero and O. Imanuvilov prove that \eqref{sys-memory1} fails to be null controllable with a boundary control. Indeed, there exists a set of initial states that cannot be driven to $0$ in any positive final time; see also\cite{Zhou2014} for a similar result if $b$ is a non-trivial constant. In \cite{Zhou2018} these results are then extended  to the context of one dimensional degenerate parabolic equation where $k(x) = x^{\alpha}$, with $x\in (0, 1)$ and $ 0\leq \alpha < 1 $, and it is proved that the null controllability of \eqref{sys-memory1} fails whereas the approximate property holds in a suitable state space with a boundary control acting at the extremity $x = 0$ or $x = 1$. In \cite{Lavanya2009, Saktiv2008}  the authors consider a nonlinear and non degenerate version of \eqref{sys-memory1} in the case that $y$ is independent of $a$ and proved that the problem is null controllable assuming that the memory kernel  is sufficiently smooth and vanishes at the neighborhood of initial and final times. This assumption has been relaxed by Q. Tao and H. Gao in  \cite{TG2016}; for related results on this subject, we refer to \cite{Zuazua2017} for wave equation, \cite{Barbu2000} for viscoelasticity equation, \cite{Munoz2003} for thermoelastic system and \cite{Yong2005} in the case of heat equation with hyperbolic memory kernel (see also the bibliography therein).

To our knowledge, the first results on null controllability for a degenerate equation similar to \eqref{sys-memory1} with $b$ different from zero and $y$ independent of $a$ can be found in \cite{Allal2020} (see also \cite{AFS2020} for a singular/degenerate equation). In particular, in \cite{Allal2020} assuming that the kernel $b$ satisfies

\[
	(T-t)^{k} e^{\frac{C}{(T-t)^4}} b \in L^{\infty}((0, T) \times Q),
	\]
for some positive constant $C$,
the authors prove that the system is null controllable.

The purpose of this paper is to give a suitable condition on the memory kernel  $b$ similar to the previous one in such a way that  the degenerate parabolic equation with memory \eqref{sys-memory1} is null controllable. We underline that this is the first paper where the diffusion function $k$ is degenerate, the memory $b$ is a general nontrivial function and $y$ depends, not only on $t$ and $x$, but also on the age $a$. Due to the presence of $a$, the method used in \cite{Allal2020} cannot be applied directly, but some modifications are required, so the result is not a simple adaptation of the previous results.

\vspace{0.5cm}
We include here a brief description of the proof strategy:
in a first step, we focus on the following nonhomogeneous degenerate parabolic system
\begin{equation}\label{sys-nonhom}
\begin{cases}
\displaystyle y_t  + y_a- (k(x)  y_x )_x + \mu(t,a,x) y = h+ \chi_{\omega} f,  & (t, x) \in Q, \\
y(t,a,1)= y(t,a,0)=0, & \text{in } Q_{T,A},  \\
y(0,a,x)= y_{0}(a,x),  &  \text{in }  Q_{A,1},\\
y(t,0,x)=\int_0^A \beta(a,x) y(t,a,x) da, & \text{in } Q_{T,1},
\end{cases}
\end{equation}
for a given function $h \in L^2(Q)$.

In particular, we establish suitable Carleman estimates for the associated adjoint problem using some classical weight time functions that blow up to $+\infty$ as $t\rightarrow 0^-, T^+$. Then, using a weight time function not exploding in the neighborhood of $t=0$, we derive a new  modified Carleman estimate that would allow us to show null controllability of the underlying parabolic equation. As a consequence, we deduce a null controllability result for the following problem
\begin{equation}\label{sys-nonhom_new}\left\{
\begin{array}{lll}
\displaystyle y_t  + y_a- (k(x)  y_x )_x + \mu(t,a,x) y= \int\limits_0^t b(t,s,a,x)  w(s,a,x) \, ds + f(t,a,x) \chi_\omega & \text{in } Q, \\
y(t,a,1)= y(t,a,0)=0, & \text{in } Q_{T,A},  \\
y(0,a,x)= y_{0}(a,x),  &  \text{in }  Q_{A,1},\\
y(t,0,x)=\int_0^A \beta(a,x) y(t,a,x) da, & \text{in } Q_{T,1},\\
\end{array}
\right.\end{equation}
for a fixed $w \in L^2(Q)$.
 Finally, this controllability result combined with an appropriate application of Kakutani's fixed point Theorem allows us to obtain the null controllability result for the original system \eqref{sys-memory1} under a suitable condition on the kernel $b$.  Observe that in this paper, as in \cite{fJMPA} or in \cite{fAnona}, we do not consider the positivity of the solution, even if it is clearly interesting. This will be the subject of a forthcoming paper.

The  paper is organized in the following way: in Section \ref{sect_well_posed} we consider the well-posedness of systems \eqref{sys-memory1} and \eqref{sys-nonhom} in suitable weighted spaces. In Section \ref{sect_carleman_estimate}, we develop a new Carleman estimate for the adjoint problem associated to the nonhomogeneous parabolic equation \eqref{sys-nonhom}. In Section \ref{sect_null_control_nonhomog}, we apply such an estimate to deduce null controllability for \eqref{sys-nonhom_new} and, as a consequence, the null controllability result for  \eqref{sys-memory1} using the Kakutani's fixed point Theorem. Finally, in Section \ref{appendix}, we give the proofs of some results given in Sections \ref{sect_well_posed} and \ref{sect_carleman_estimate} for the reader's convenience.

\section{Well-posedness results} \label{sect_well_posed} 
The goal of this section is to study the well-posedness results for \eqref{sys-memory1} and \eqref{sys-nonhom}. First, we recall the following weighted Sobolev spaces (in the sequel, a.c. means  absolutely continuous):
\[
\begin{aligned}
 H^1_k(0,1):=\{ u \in L^2(0,1) \ \mid \ u \text{ a.c. in } [0,1], \sqrt{k} u_x \in  L^2(0,1) \text{ and } u(1)=u(0)=0 \}
\end{aligned}
\]
and
\[
H^2_k(0,1):=  \{ u \in H^1_k(0,1) |\,ku_x \in
H^1(0,1)\},
\]
equipped with the norms
\begin{align*}
\| y \|_{H_k^1(0,1)}^2:=  \| y \|_{L^2(0,1)}^2 +  \| \sqrt{k} y_x \|_{L^2(0,1)}^2\quad
\text{ and } \quad \| y \|_{H_k^2(0,1)}^2 :=  \| y \|_{H_k^1(0,1)}^2 +  \| (k y_x )_x \|_{L^2(0,1)}^2.
\end{align*}
 As in \cite{Alabau2006} or \cite{fm1}, we have that the  operator
\[\mathcal A_0u:= (ku_{x})_x,\qquad    D(\mathcal A_0): = H^2_{k}(0,1)\]
is self--adjoint, nonpositive  and generates a strongly continuous
semigroup  on the space $L^2(0,1)$.

Now, setting $ \mathcal A_a u := \ds \frac{\partial  u}{\partial a}$, we have that
\[
\mathcal Au:= \mathcal A_a u - 
\mathcal A_0 u,
\]

for 
\[
u \in D(\mathcal A) =\left\{u \in L^2(0,A;D(\mathcal A_0)) : \frac{\partial u}{\partial a} \in  L^2(0,A;H^1_k(0,1)), u(0, x)= \int_0^A \beta(a, x) u(a, x) da\right\},
\]
generates a strongly continuous semigroup on $L^2(Q_{A,1}):= L^2(0,A; L^2(0,1))$ (see also \cite{iannelli}). Therefore, 
$
(\mathcal A + B(t), D(\mathcal A))
$ generates a strongly continuous semigroup. Here
$B(t)$ is defined as
\[
B(t) u:= \mu(t,a,x) u,
\]
for $u \in D(\mathcal A)$, thus it can be seen as a bounded perturbation of $\mathcal A$ (see, for example, \cite{Alabau2006}).

Setting $L^2(Q):= L^2(0,T;L^2(Q_{A,1}))$
and assuming the next hypothesis:
\begin{assumptions}\label{ratesAss}
 The  functions $b$,  $\mu$ and $\beta$ are such that
\begin{equation}\label{3}
\begin{aligned}
&\bullet b=b(t,s,a, x)\in L^\infty((0, T) \times Q),\\
&\bullet \beta \in  C(\bar Q_{A,1}) \text{ and } \beta \geq0  \text{ in } Q_{A,1}, \\
&\bullet \mu \in C(\bar Q) \text{ and }  \mu\geq0\text{ in } Q,
\end{aligned}
\end{equation}
\end{assumptions}
the following well posedness  result holds:
\begin{theorem}\label{theorem_existence}
Assume that $b$, $\mu$ anf $\beta$ satisfy Hypothesis $\ref{ratesAss}$ and suppose that $k$ is weakly or strongly degenerate at $0$ and/or at $1$.  For all $f, h \in
L^2(Q)$ and $y_0 \in L^2(Q_{A,1})$, the system \eqref{sys-nonhom} admits a unique solution 
\[y \in \mathcal W_T:= C\big([0,T];
L^2(Q_{A,1}))\big) \cap L^2 \big(0,T;
L^2(0,A; H^1_k(0,1))\big)\]
and 

\begin{align}\label{stimau}
\sup_{t \in [0,T]} \|y(t)\|^2_{L^2(Q_{A,1})} & +\int_0^T\int_0^A\|\sqrt{k}y_x\|^2_{L^2(0,1)}dadt\notag\\
& \le C\left( \|y_0\|^2_{L^2(Q_{A,1})}  + \|f\chi_\omega\|^2_{L^2(Q)}+ \|h\|^2_{L^2(Q)}\right),
\end{align}
where $C$ is a positive constant independent of $b, k, y_0, h$ and $f$. In addition, if $f\equiv h\equiv 0$,  then
$
y\in C^1\big([0,T];L^2(Q_{A,1})\big).
$ 
Moreover, if $y_0 \in L^2(0,A; H^1_k(0,1))$, then
\[y \in \mathcal Z_T:= L^2(0, T;L^2(0,A; H^1_k(0,1)))\cap H^1(0,T; L^2(Q_{A,1})) \] and \eqref{stimau} holds.
\end{theorem}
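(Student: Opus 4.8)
The plan is to treat \eqref{sys-nonhom} as an abstract non-autonomous Cauchy problem and to combine the generation result recalled above with energy estimates. First I would set $F := h + \chi_\omega f \in L^2(Q)$ and rewrite \eqref{sys-nonhom} as
\[
y'(t) + (\mathcal{A}+B(t))\,y(t) = F(t),\quad t\in(0,T),\qquad y(0)=y_0,
\]
posed in $L^2(Q_{A,1})$, where the nonlocal renewal condition $y(t,0,x)=\int_0^A\beta\, y\,da$ is encoded in $D(\mathcal{A})$. Since $\mathcal{A}$ generates a strongly continuous semigroup and $B(t)$ is a bounded perturbation depending continuously on $t$ (because $\mu\in C(\bar Q)$), the family $\mathcal{A}+B(t)$ governs a strongly continuous evolution family $\{U(t,s)\}_{0\le s\le t\le T}$; hence for every $y_0\in L^2(Q_{A,1})$ and $F\in L^2(0,T;L^2(Q_{A,1}))$ there is a unique mild solution given by the variation of constants formula $y(t)=U(t,0)y_0+\int_0^t U(t,s)F(s)\,ds$, which lies in $C([0,T];L^2(Q_{A,1}))$. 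Uniqueness is immediate from this representation.

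The core of the argument is the a priori estimate \eqref{stimau}, which upgrades the mild solution to $\mathcal{W}_T$. I would derive it first for sufficiently regular data (e.g. $y_0\in D(\mathcal{A})$, or along a Faedo--Galerkin scheme built on the eigenfunctions of $\mathcal{A}_0$) and then pass to the general case by density. Multiplying the equation by $y$ and integrating over $Q_{A,1}$ produces $\tfrac12\frac{d}{dt}\|y(t)\|^2_{L^2(Q_{A,1})}$ from the time derivative. The diffusion term is integrated by parts in $x$: the boundary contributions vanish thanks to $y(t,a,0)=y(t,a,1)=0$ (the factor $ky_x$ being continuous for $y\in H^2_k$), yielding the good dissipative term $+\|\sqrt{k}\,y_x\|^2_{L^2(Q_{A,1})}$. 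The transport term is integrated by parts in $a$, giving $\tfrac12\|y(t,A,\cdot)\|^2_{L^2(0,1)}-\tfrac12\|y(t,0,\cdot)\|^2_{L^2(0,1)}$; the first term is nonnegative and kept, while for the second I substitute the renewal condition and use $\beta\in C(\bar Q_{A,1})$ with Cauchy--Schwarz to get $\|y(t,0,\cdot)\|^2_{L^2(0,1)}\le A\|\beta\|^2_\infty\|y(t)\|^2_{L^2(Q_{A,1})}$. Since $\mu\ge0$ the zero-order term has a favourable sign, and the source is absorbed by Young's inequality. Collecting terms gives
\[
\frac{d}{dt}\|y(t)\|^2_{L^2(Q_{A,1})}+\|\sqrt{k}\,y_x\|^2_{L^2(Q_{A,1})}\le C\|y(t)\|^2_{L^2(Q_{A,1})}+\|F(t)\|^2_{L^2(Q_{A,1})},
\]
and Gr\"onwall's lemma followed by integration in $t$ yields \eqref{stimau} with a constant depending only on $T$, $A$ and $\|\beta\|_\infty$; in particular it is independent of $b$ (which does not appear in \eqref{sys-nonhom}), of $k$, and of the data.

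For the last two assertions I would argue as follows. When $f\equiv h\equiv0$ the solution reduces to $y(t)=U(t,0)y_0$, which inherits the time-differentiability of the underlying semigroup (the part generated by $\mathcal{A}_0$ being analytic, since $\mathcal{A}_0$ is self-adjoint and nonpositive), giving $y\in C^1([0,T];L^2(Q_{A,1}))$; being essentially standard, the details can be relegated to Section \ref{appendix}. Finally, for $y_0\in L^2(0,A;H^1_k(0,1))$ the improved regularity $y\in\mathcal{Z}_T$ follows from a second energy estimate: multiplying the equation by $y_t$ (again at the Galerkin level) and integrating by parts produces $\|y_t\|^2_{L^2(Q)}$ together with $\tfrac12\frac{d}{dt}\|\sqrt{k}\,y_x\|^2$, the age and zero-order contributions being absorbed exactly as above, so that $y_t\in L^2(0,T;L^2(Q_{A,1}))$ and $y\in L^\infty(0,T;L^2(0,A;H^1_k(0,1)))$, while \eqref{stimau} persists in the limit. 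I expect the main obstacle to be the rigorous justification of the integrations by parts in the degenerate setting, namely controlling the boundary behaviour of $ky_x$ at $x=0,1$ for functions in $H^2_k$ and keeping the constant in \eqref{stimau} independent of $k$, together with the careful treatment of the nonlocal age boundary term, which is precisely what distinguishes this computation from the non-degenerate, age-independent case.
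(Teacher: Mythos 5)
Your proposal follows essentially the same route as the paper: existence and uniqueness via the semigroup generated by $\mathcal A + B(t)$ in $L^2(Q_{A,1})$, followed by the energy estimate obtained by multiplying the equation by $y$, integrating by parts in $x$ and $a$, bounding the renewal term $\int_0^1 y^2(t,0,x)\,dx$ through the boundedness of $\beta$, applying Gr\"onwall and then integrating in time to recover the $\sqrt{k}\,y_x$ term, with the extension to general $y_0\in L^2(Q_{A,1})$ by density. The one point where you go slightly beyond (and slightly astray of) the paper is the justification of the $C^1$ regularity when $f\equiv h\equiv 0$ via analyticity: the full generator contains the age-transport term $\partial_a$, so the semigroup is not analytic and that argument does not apply as stated, though the paper itself asserts this claim without proof.
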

For the reader's convenience, we postpone the proof of the previous theorem to the Appendix. 

The following result establishes existence and uniqueness of solution for system \eqref{sys-memory1}.

\begin{theorem}\label{prop-Well-posed_memory}Assume Hypothesis $\ref{ratesAss}$ and suppose  that $k$ is weakly or strongly degenerate at $0$ and/or at $1$. If $y_0 \in L^2(Q_{A,1})$ and $f\in L^2(Q)$. Then, system \eqref{sys-memory1} admits a unique solution
$
y \in \mathcal W_T$.
\end{theorem}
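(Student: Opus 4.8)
The plan is to recast \eqref{sys-memory1} as a fixed point problem for the memory term and to solve it by a contraction (Banach) argument resting on the well-posedness of the nonhomogeneous system already granted by Theorem \ref{theorem_existence}. Given $w\in L^2(Q)$, set
\[
h_w(t,a,x):=\int_0^t b(t,s,a,x)\,w(s,a,x)\,ds .
\]
Since $b\in L^\infty((0,T)\times Q)$, the Cauchy--Schwarz inequality yields
\[
\|h_w\|_{L^2(Q)}^2\le \|b\|_{L^\infty}^2\,\|w\|_{L^2(Q)}^2\int_0^T t\,dt=\frac{T^2}{2}\,\|b\|_{L^\infty}^2\,\|w\|_{L^2(Q)}^2,
\]
so $h_w\in L^2(Q)$. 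Hence, by Theorem \ref{theorem_existence}, problem \eqref{sys-nonhom_new} (which is exactly \eqref{sys-nonhom} with source $h=h_w$) admits a unique solution in $\mathcal W_T$, and I denote it $\Phi(w)$. Since any fixed point of $\Phi$ is precisely a solution of \eqref{sys-memory1} belonging to $\mathcal W_T$, it suffices to show that $\Phi:L^2(Q)\to L^2(Q)$ has a unique fixed point.

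Next I would exploit that $\Phi$ is affine: for $w_1,w_2\in L^2(Q)$ the difference $z:=\Phi(w_1)-\Phi(w_2)$ solves \eqref{sys-nonhom} with $y_0\equiv0$, $f\equiv0$ and source $h_{w_1-w_2}$. Applying \eqref{stimau} (whose constant $C$ is, crucially, independent of $b$) together with the convolution bound above gives
\[
\sup_{t\in[0,T]}\|z(t)\|_{L^2(Q_{A,1})}^2\le C\,\|h_{w_1-w_2}\|_{L^2(Q)}^2\le \frac{C\,T^2}{2}\,\|b\|_{L^\infty}^2\,\|w_1-w_2\|_{L^2(Q)}^2 .
\]
Bounding $\|z\|_{L^2(Q)}^2\le T\sup_{t}\|z(t)\|^2$ shows $\Phi$ is Lipschitz on $L^2(Q)$ and, as soon as $\tfrac{C\,T^3}{2}\|b\|_{L^\infty}^2<1$, a strict contraction; on such a short horizon Banach's theorem produces the unique solution.

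The only genuinely delicate point I expect is passing from small $T$ to an arbitrary horizon, but the Volterra (convolution) structure makes this a technical rather than conceptual obstacle. I would use the localized form of the energy estimate underlying Theorem \ref{theorem_existence}, namely $\sup_{\tau\le t}\|z(\tau)\|_{L^2(Q_{A,1})}^2\le C\int_0^t\|h_{w_1-w_2}(\tau)\|_{L^2(Q_{A,1})}^2\,d\tau$, and feed in $\|h_{w_1-w_2}(\tau)\|^2\le \|b\|_{L^\infty}^2\,\tau\int_0^\tau\|(w_1-w_2)(s)\|^2\,ds$. Iterating $\Phi$ then yields the standard factorial gain: the contraction constant of the $n$-th iterate is proportional to $(C\,\|b\|_{L^\infty}^2\,T^2)^n/(2n)!\to0$, so some power $\Phi^n$ is a strict contraction on $C([0,T];L^2(Q_{A,1}))$, and the generalized contraction principle furnishes a unique fixed point on all of $[0,T]$. (Alternatively one may split $[0,T]$ into finitely many short subintervals and restart, absorbing the already-computed past of the memory term into a known source.)

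Finally, uniqueness of the solution of \eqref{sys-memory1} is immediate from the same contraction estimate, and the fixed point identity $y=\Phi(y)$ shows that $y$ is the output of $\Phi$, hence automatically lies in $\mathcal W_T$ with the regularity asserted by Theorem \ref{theorem_existence}. Everything beyond the localized energy estimate is a direct consequence of \eqref{stimau} and the boundedness of $b$.
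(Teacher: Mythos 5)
Your argument is correct, but it follows a genuinely different route from the paper. The paper's proof is a two-line reduction: it rewrites \eqref{sys-memory1} as the abstract Volterra integro-differential Cauchy problem \eqref{Cauch_problem} and invokes the abstract existence result of Grasselli and Lorenzi (\cite[Theorem 1.1]{Grasselli1991}), ``proceeding as in \cite{Allal2020}''. You instead build the solution by hand: you define the affine solution map $w\mapsto\Phi(w)$ through Theorem \ref{theorem_existence} applied with source $h_w=\int_0^t b\,w\,ds$, verify via Cauchy--Schwarz and the $b$-independence of the constant in \eqref{stimau} that $\Phi$ is Lipschitz, and then exploit the Volterra structure to show that some iterate $\Phi^n$ is a strict contraction, so no smallness condition on $T$ or $\|b\|_\infty$ is needed. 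Both proofs are sound; the paper's buys brevity by outsourcing the work to an abstract framework designed for exactly this class of kernels, while yours is self-contained, uses only machinery already proved in the paper, and makes transparent \emph{why} the memory term causes no obstruction on an arbitrary time horizon (the factorial gain in the Picard iteration). The only points you should make explicit are (i) that the ``localized'' energy inequality $\sup_{\tau\le t}\|z(\tau)\|^2\le C\int_0^t\|h(\tau)\|^2\,d\tau$ is not the statement of \eqref{stimau} but is exactly what the Gronwall step in the Appendix proof of Theorem \ref{theorem_existence} delivers, and (ii) that the fixed point of $\Phi$ solves \eqref{sys-memory1} in the same (mild/weak) sense in which Theorem \ref{theorem_existence} produces solutions; both are routine.
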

\begin{proof}
First of all, we transform \eqref{sys-memory1} into the following Cauchy problem
\begin{equation}\label{Cauch_problem}
\left\{
\begin{array}{ll}
\displaystyle y'(t) + (\mathcal A+B(t))y(t) = \int\limits_0^th(t,s, y(s)) \, ds + g(t), \quad t \in (0, T), \\
y(0)= y_{0}, 
\end{array}
\right.
\end{equation}
where $\mathcal A$ and $B(\cdot)$ are defined as before, $g(t) := 1_{\omega} f(t), \quad \text{for a.e. } t \in (0, T), $ and
$$ h(t,s,y(s)) : =  b(t,s, \cdot)  y(s), \quad \text{for a.e. } (t,s) \in (0, T)\times (0,T).$$
Proceeding as in \cite{Allal2020}, the thesis follows by \cite[Theorem 1.1]{Grasselli1991}. 

\end{proof}
We emphasis that, in order to prove the main null controllability result for \eqref{sys-memory1} it is not necessary to require more regularity for the initial data. Indeed, we only need existence and uniqueness in the case $y_0 \in L^2(Q_{A,1})$.

\section{Carleman estimates} \label{sect_carleman_estimate}
The goal of this section is to establish suitable Carleman estimates for the following adjoint parabolic system 
\begin{equation}\label{sys-adj-nonhom}
\begin{cases}
\ds \frac{\partial v}{\partial t} + \frac{\partial v}{\partial a}
+(k(x)v_{x})_x-\mu(t, a, x)v +\beta(a,x)v(t,0,x) =g,& (t,x,a) \in  Q,
\\[5pt]
v(t,a,0)=v(t,a,1) =0, &(t,a) \in Q_{T,A},\\
  v(t,A,x)=0, & (t,x) \in Q_{T,1},
\end{cases}
\end{equation}
where $g\in L^2(Q)$ and the rates $\mu$ and $\beta$ satisfy Hypothesis \ref{ratesAss}.

Moreover, from now on, we assume that the control set $\omega$ is such that
\begin{equation}\label{omega}
\omega=  (\alpha, \rho)  \subset\subset  (0,1).
\end{equation}
\subsection{The case $k(0)=0$}

In this subsection we will consider the case when $k$ degenerates only at $x=0$. In particular, we make the following assumptions:
\begin{assumptions}\label{BAss01} The function
$k\in C^0[0,1]\bigcap C^1(0,1]$  is such that $k(0)=0$, $k>0$ on
$(0,1]$ and there exists $M_1\in (0,2)$ such that
$x k'(x)  \le M_1k(x) $ for a.e. $x \in [0,1]$. Moreover, if $M_1\ge1$ one has to require that there exists $\theta \in (0,M_1]$, such that the function $x \ds \mapsto \frac{k(x)}{x^\theta}$ is nondecreasing near $0$.
\end{assumptions}
Now, we consider, as in \cite{fAnona}, the following weight functions 
\begin{equation}\label{varphi}
\varphi(t,a,x):=\Theta(t,a)\psi(x),
\end{equation}
where
\begin{equation}\label{Theta} 
\Theta(t, a)= \frac{1}{t^{4}(T-t)^{4}a^{4}},
\end{equation}
\begin{equation}\label{psi} 
\psi(x):=p(x) - 2 \|p\|_{L^\infty(0,1)}
\end{equation}
and
$\displaystyle p(x):=\int_0^x\frac{y}{k(y)}dy$. Observe that $\psi$ is increasing; in particular, $\psi(0)=-2\|p\|_{L^\infty(0,1)}$ and $\psi(1)=\int_0^1 \frac{x}{k(x)}dx-2\|p\|_{L^\infty(0,1)}= -\|p\|_{L^\infty(0,1)}$.
Clearly  $\psi <0$, thus $ \varphi(t,a, x)  <0$ for all $(t,a,x) \in Q$ and
$\varphi(t,a, x)  \rightarrow - \infty \, \text{ as } t \rightarrow
0^+, T^-$  or  $a \rightarrow
0^+$.

We also define
\begin{equation}
\eta(t,a, x)=\Theta(t, a) \Psi(x), \quad  \Psi(x)=e^{\kappa \rho(x)}-e^{2 \kappa\|\rho\|_{\infty}},
\end{equation}
$\ds
(t, a, x) \in Q, \quad \kappa>0 \quad\text {and} \quad \rho(x):= \mathfrak{d} \int_{x}^{1} \frac{1}{k(t)} d t, \quad\text { where} \quad\mathfrak{d}=\left\|k^{\prime}\right\|_{L^{\infty}(0,1)}$.
By taking the parameter $\kappa$ such that
\begin{equation}\label{cond_kappap}
\ds \kappa \leq \frac{\ln\left(\|p\|_{L^\infty(0,1)}+ 1\right)}{2\|\rho\|_{\infty}}
\end{equation}
we have  $\max\limits_{x \in [0,1]}\psi(x) \le \min\limits_{x \in [0,1]}\Psi(x)$,
 thus
\begin{equation}\label{compar_varphi_eta}
\varphi(t,a,x) \leq \eta(t,a,x), \;\, \text{for all} \;(t,a,x) \in Q.
\end{equation}
The following Carleman estimate holds.
\begin{theorem}\label{Cor1}
Assume Hypotheses $\ref{ratesAss}$ and $\ref{BAss01}$. Then,
there exist two strictly positive constants $C$ and $s_0$ such that every
solution 
$z \in
\mathcal Z_T$
of
\begin{equation} \label{adjoint}
\begin{cases}
\ds \frac{\partial z}{\partial t} + \frac{\partial z}{\partial a}
+(k(x)z_{x})_x-\mu(t, a, x)z =g,& (t,a,x) \in Q,\\
  z(t, a, 0)=z(t, a, 1)=0, & (t,a) \in Q_{T,A},\\
   z(t,A,x)=0, & (t,x) \in Q_{T,1}
\end{cases}
\end{equation}
satisfies, for all $s \ge s_0$,

\begin{align*}
\int_{Q} \Big(s \Theta k  z_x^2 & + s^3\Theta^3  \frac{x^2}{k} z^2\Big)e^{2s\varphi} \, dxdadt \\
&\leq C \Big(\int_{Q}g^{2} e^{2s\eta} dxdadt+
 \int_0^T \int_0^A\int_{\omega} s^2 \Theta^2 z^2 e^{2s\eta}  dx dadt\Big).
\end{align*}
\end{theorem}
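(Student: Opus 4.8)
The plan is to follow the classical Carleman strategy adapted to the degenerate operator and the age variable, proceeding along the lines of \cite{fAnona}. First I would introduce the conjugated function $w := e^{s\varphi} z$ and compute the equation it satisfies. Writing $z = e^{-s\varphi} w$ and substituting into \eqref{adjoint}, after multiplying back by $e^{s\varphi}$ one obtains an operator identity $P_s^+ w + P_s^- w = g_s$, where $g_s := e^{s\varphi}g$, the part $P_s^+$ collects the self-adjoint terms (essentially $(kw_x)_x + s^2 k\varphi_x^2 w$ plus a chosen portion of the zeroth-order terms) and $P_s^-$ collects the skew-adjoint terms (essentially $w_t + w_a - 2 s k\varphi_x w_x$). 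From the orthogonality-type identity $\|P_s^+ w\|^2 + \|P_s^- w\|^2 + 2\langle P_s^+ w, P_s^- w\rangle = \|g_s\|^2$, it then suffices to bound the double product $\langle P_s^+ w, P_s^- w\rangle$ from below.

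The core of the argument is the computation of this double product through integration by parts in $t$, $a$ and $x$. The distributed terms produce the two dominant quantities: the product of $(kw_x)_x$ with $-2sk\varphi_x w_x$ yields, after one integration by parts, a term controlled from below by $\int_Q s\Theta (k - xk') w_x^2$, while the product of $s^2 k\varphi_x^2 w$ with $-2sk\varphi_x w_x$ yields, using $\varphi_x = \Theta x/k$ and a further integration by parts, a term bounded below by $\int_Q 2 s^3 \Theta^3 \frac{x^2(3k-xk')}{k^2} w^2$. Here the degeneracy condition $xk' \le M_1 k$ with $M_1 < 2$ is exactly what guarantees positivity of the coefficient $3k - xk' \ge (3-M_1)k$ (and, for the first quantity in the weakly degenerate range $M_1 < 1$, of $k - xk'$); in the strongly degenerate regime $M_1 \in [1,2)$ one must instead exploit the auxiliary monotonicity of $x \mapsto k(x)/x^\theta$ from Hypothesis \ref{BAss01} to keep the first-derivative term under control near $x=0$. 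The terms arising from $w_t$ and $w_a$ against the symmetric part, together with those coming from $\varphi_t$, $\varphi_a = -\tfrac{4}{a}\Theta\psi$ and $(k\varphi_x)_x$, and the zeroth-order contribution of $\mu$, are all of lower order in $s$ or in $\Theta$ and are absorbed into the two dominant terms once $s \ge s_0$ is taken large enough.

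Next I would dispose of the boundary contributions generated by the integrations by parts. At $a = A$ the condition $z(t,A,x)=0$ annihilates the corresponding terms, while as $a \to 0^+$, $t \to 0^+$ and $t \to T^-$ the factor $\Theta$ blows up, so $e^{2s\varphi} \to 0$ forces all the temporal and age boundary terms to vanish; thus the age variable is treated in complete analogy with the time variable, both being first-order transport terms. The genuinely delicate boundary analysis occurs in the space variable: at $x=1$ the homogeneous Dirichlet condition eliminates $w$ but leaves a term in $w_x(t,a,1)$ that cannot be signed and must be reabsorbed later; at the degenerate endpoint $x=0$ one uses the vanishing of $k$ together with the explicit form $\psi' = x/k$ (and, in the strongly degenerate case, the $\theta$-monotonicity) to show that the boundary term either vanishes or has the favorable sign. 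I expect this degenerate boundary term at $x=0$ in the strongly degenerate range to be the principal technical obstacle.

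Finally, to convert the estimate into the stated form I would localize the residual boundary term at $x=1$ and any leftover interior mass. Since the operator is uniformly parabolic away from $x=0$ and $\omega = (\alpha,\rho) \subset\subset (0,1)$, a Caccioppoli-type inequality combined with a smooth cutoff supported near the nondegenerate boundary allows me to estimate these remainders by a localized integral of the form $\int_0^T\int_0^A\int_\omega s^2 \Theta^2 z^2 e^{2s\eta}\,dx\,da\,dt$; the switch from the weight $\varphi$ to $\eta$ is legitimate here precisely because of the comparison $\varphi \le \eta$ in \eqref{compar_varphi_eta}. The same comparison bounds the source contribution, since $\int_Q g^2 e^{2s\varphi}\,dx\,da\,dt \le \int_Q g^2 e^{2s\eta}\,dx\,da\,dt$. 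Returning from $w$ to $z$ via $w = e^{s\varphi}z$ then reproduces the factors $e^{2s\varphi}$ on the left-hand side and yields the claimed Carleman estimate for all $s \ge s_0$.
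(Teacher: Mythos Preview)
Your proposal is correct and follows essentially the same route as the paper: the paper simply refers to \cite[Theorem 4.2]{fAnona} for the conjugation $w=e^{s\varphi}z$, the symmetric/antisymmetric splitting, the integration-by-parts computation, and the boundary analysis you outline, and then singles out as the only novelty the Caccioppoli-type inequality (Lemma \ref{lemma_caccio}) with the specific right-hand side $\int_0^T\int_0^A\int_\omega (s^2\Theta^2 z^2 + g^2)e^{2s\psi}\,dx\,da\,dt$, which is exactly the localization step you describe at the end.
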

\begin{proof}
The proof of this theorem mainly follows the same ideas of \cite[Theorem 4.2]{fAnona}. The only difference to point out is the following: instead of applying  Caccioppoli's inequality given in \cite[Proposition 4.2]{fAnona}, we apply the following Lemma (whose proof is postpone to the Appendix).

\begin{lemma}\label{lemma_caccio}
Let $\omega^{\prime}$ and $\omega$ two open subintervals of (0,1) such that $\omega^{\prime} \Subset \omega \Subset (0,1) .$ Let $\psi(t, a, x):=\Theta(t, a) \phi(x),$ where $\Theta$ is defined in \eqref{Theta} and $\phi \in C^{2}(0,1)$ is a strictly negative function. Then, there exist two strictly positive constants $C$ and $s_{0}$ such that, for all $s \geq s_{0}$,  every solution $z$ of \eqref{adjoint} satisfies
\begin{equation}\label{inequality_caccio}
\int_{0}^{T} \int_{0}^{A} \int_{\omega^{\prime}} z_{x}^{2} e^{2 s \psi} d x d a d t \leq C\left(\int_{0}^{T} \int_{0}^{A} \int_{\omega}  ( s^2 \Theta^2 z^{2} +  g^{2} ) e^{2 s \psi} d x d a d t\right).
\end{equation}
\end{lemma}
Hence, one can easily deduce the desired estimate given in Theorem \ref{Cor1}.
\end{proof}

As a consequence of the previous theorem, one has the next result.
\begin{corollary}\label{Cor}
Assume Hypotheses $\ref{ratesAss}$ and $\ref{BAss01}$. Then,
there exist two strictly positive constants $C$ and $s_0$ such that every
solution $v \in
\mathcal Z_T$ of \eqref{sys-adj-nonhom}
satisfies, for all $s \ge s_0$,
\begin{align}\label{Carl_estimate-2}
\int_{Q} \Big(s \Theta k  v_x^2 & + s^3\Theta^3  \frac{x^2}{k} v^2\Big)e^{2s\varphi} \, dxdadt \notag\\
&\leq C \Big(\int_{Q}g^{2} e^{2s\eta} dxdadt + \int_Q v^2(t,0,x) e^{2s\eta} dxdadt \Big)\notag\\
&+ C \int_0^T \int_0^A\int_{\omega} s^2 \Theta^2 v^2 e^{2s\eta}  dx dadt.
\end{align}
\end{corollary}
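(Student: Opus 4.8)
The plan is to derive Corollary \ref{Cor} from Theorem \ref{Cor1} by recognizing that the adjoint system \eqref{sys-adj-nonhom} is precisely the system \eqref{adjoint} studied in Theorem \ref{Cor1}, but with an extra zeroth-order source term arising from the birth coupling $\beta(a,x)v(t,0,x)$. Concretely, I would set $z := v$, where $v\in\mathcal Z_T$ solves \eqref{sys-adj-nonhom}, and observe that $v$ then solves \eqref{adjoint} with the right-hand side
\[
\tilde g := g - \beta(a,x)\, v(t,0,x),
\]
since moving $\beta(a,x)v(t,0,x)$ to the other side of the equation reproduces exactly the form of \eqref{adjoint}. The boundary and terminal conditions $v(t,a,0)=v(t,a,1)=0$ and $v(t,A,x)=0$ match those required in \eqref{adjoint}, so $v$ is an admissible solution and Theorem \ref{Cor1} applies verbatim with $g$ replaced by $\tilde g$.

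Applying Theorem \ref{Cor1} to $z=v$ with source $\tilde g$ yields
\[
\int_{Q} \Big(s \Theta k  v_x^2 + s^3\Theta^3  \frac{x^2}{k} v^2\Big)e^{2s\varphi} \, dxdadt
\le C \Big(\int_{Q}\tilde g^{2} e^{2s\eta} dxdadt+ \int_0^T \int_0^A\int_{\omega} s^2 \Theta^2 v^2 e^{2s\eta}  dx dadt\Big).
\]
The next step is to estimate the new source term. Using the elementary inequality $(a+b)^2 \le 2a^2 + 2b^2$ together with the boundedness of $\beta$ from Hypothesis \ref{ratesAss} (namely $\beta\in C(\bar Q_{A,1})$, hence $\|\beta\|_{L^\infty(Q_{A,1})}<\infty$), I would bound
\[
\int_{Q}\tilde g^{2} e^{2s\eta}\, dxdadt
\le 2\int_{Q} g^{2} e^{2s\eta}\, dxdadt
+ 2\|\beta\|_{L^\infty(Q_{A,1})}^2 \int_{Q} v^2(t,0,x)\, e^{2s\eta}\, dxdadt.
\]
Substituting this back and absorbing the constant $2\|\beta\|_{L^\infty}^2$ into $C$ produces exactly the three-term right-hand side appearing in \eqref{Carl_estimate-2}, with the genuinely new contribution being the observability-type term $\int_Q v^2(t,0,x)\, e^{2s\eta}\, dxdadt$ carried by the birth coupling.

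Since each step is a direct substitution or a routine application of the triangle/Young inequality combined with the a priori boundedness of $\beta$, I do not expect a serious obstacle here; the mechanism is the standard trick of treating a linear coupling term as part of the forcing. The only point deserving care is verifying that $v(t,0,x)$ indeed produces a well-defined $L^2$ quantity so that the integral $\int_Q v^2(t,0,x)\,e^{2s\eta}\,dxdadt$ makes sense — this is guaranteed by the regularity $v\in\mathcal Z_T$, which via the trace in the age variable controls $v(\cdot,0,\cdot)$. I would therefore present the argument succinctly as an application of Theorem \ref{Cor1} with the modified right-hand side, followed by the splitting of the source term, remarking that the term $\int_Q v^2(t,0,x)\,e^{2s\eta}\,dxdadt$ will later be handled (for instance by integrating in $a$ and exploiting the structure of the problem) when deducing the final observability inequality used for null controllability.
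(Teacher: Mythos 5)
Your proposal matches the paper's own proof: the paper likewise rewrites the equation of \eqref{sys-adj-nonhom} with source $\bar g := g - \beta(a,x)v(t,0,x)$, applies Theorem \ref{Cor1}, and splits $\bar g^2$ using the boundedness of $\beta$ to obtain the three-term right-hand side of \eqref{Carl_estimate-2}. The argument is correct and essentially identical to the one in the paper.
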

\begin{proof}
Rewrite the equation of
   \eqref{sys-adj-nonhom} as $ \ds\frac{\partial v}{\partial t} + \ds \frac{\partial v}{\partial a}
+(k(x)v_{x})_x-\mu(t, a, x)v =\bar g, $ where $\bar{g}
    := g -\beta(a,x) v(t,0,x)$. Then, applying Theorem \ref{Cor1}, there exists
    two positive constants $C$ and $s_0 >0$, such that
    \begin{equation*}
    \begin{aligned}
\int_{Q}\left(s \Theta k v_x^2
                + s^3\Theta^3\text{\small$\displaystyle\frac{x^2}{k}$\normalsize}
                  v^2\right)e^{2s\varphi}dxdadt
&\le
C\int_{Q}\bar g^{2}\text{\small$e^{2s\eta}$\normalsize}dxdadt
\\&+ C\int_0^T \int_0^A\int_ \omega s^2 \Theta^2 v^2 e^{2s\eta} dx dadt\\
& \le C\int_{Q}g^{2}\text{\small$e^{2s\eta}$\normalsize}dxdadt + C \int_Q v^2(t,0,x) e^{2s\eta} dxdadt
\\&+ C\int_0^T \int_0^A\int_ \omega s^2 \Theta^2 v^2 e^{2s\eta} dx dadt
   \end{aligned}
    \end{equation*}
    for all $s \ge s_0$.
\end{proof}
Observe that, if the problem is considered in $(T_1, T_2)\times(a_0,A)\times (0,1)$ with $T_1, T_2, a_0>0$, the previous result still holds considering $\Theta(t,a):=\frac{1}{[(t- T_1)(T_2-t)]^4(a-a_0)^4}$ for all $(t,a) \in (T_1, T_2)\times(a_0, A)$.

Following \cite{Allal2020}, by \eqref{Carl_estimate-2} we will derive a new modified Carleman inequality with a weight time function exploding only at the final time $t=T$. This new weight allows us to derive a null controllability result for system \eqref{sys-memory1} imposing a restriction on the kernel $b$ only at the final time $t=T$. 
To this end, let us introduce the following weight functions:
\begin{equation}\label{modif_weight_funct}
\begin{aligned}
&\gamma(t,a):=
\begin{cases}
\Theta(\frac{T}{2}, a) = \big(\frac{4}{T^2 }\big)^4\frac{1}{a^4},   & \text{for} \; t\in\big[0,\frac{T}{2}\big],\\
\Theta(t,a), & \text{for} \; t\in \big[\frac{T}{2}, T\big],
\end{cases} \quad \forall\; a \in (0,A), \\
& \Phi(t,a,x) :=  \gamma(t,a)\psi(x), \qquad \sigma(t,a,x): = \gamma(t,a)\Psi(x),  \quad \forall\; (t,a, x) \in Q, \\
& \widehat{\Phi}(t):= \min_{a\in(0,A]}\max\limits_{x\in[0,1]} \Phi(t,a,x)= \gamma(t,A)\psi(1),  \quad \forall \;t \in (0,T),\\
& \Phi^*(t, a):=\min\limits_{x\in[0,1]} \Phi(t,a,x)= \gamma(t, a)\psi(0),  \quad \forall\; (t,a) \in Q_{T,A}.
\end{aligned}
\end{equation}
Now, we are ready to state the following modified Carleman estimate, which reveals to be a major tool to obtain the null controllability result of the memory system \eqref{sys-memory1}.
 \begin{proposition}\label{Prop_modif_Carl_estimate_1} Assume Hypotheses $\ref{ratesAss}$, $\ref{BAss01}$ and let $T_0 \in [T/2, T)$. Then, for all $\delta \in (0,A)$,
there exist two positive constants $s_0$  and $C=C(s, T_0, \delta)$ such that every solution $v \in \mathcal Z_T$ of \eqref{sys-adj-nonhom} satisfies
\begin{equation}\label{modif_Carl_estimate}
\begin{aligned}
&\| e^{s\widehat{\Phi}(0)} v(0) \|_{L^2(Q_{A,1})}^2  + \int_Q e^{2s\Phi} v^2   \,dxdadt  \leq C \bigg(\int_{Q}e^{2s\hat \Phi(0)} g^{2} dxdadt \\
&+ \int_0^T \int_0^1 e^{2s\hat \Phi(0)} v^2(t,0,x)  dxdt + \int_0^T \int_0^A\int_{\omega} s^2 \gamma^2 v^2 e^{2s\sigma} dx dadt + \int_{T_0}^{T}\int_0^{\delta}\int_0^1 e^{2s\hat \Phi(0)} v^2 dxdadt\bigg)
\end{aligned}
\end{equation}
for all $s \geq s_0$. 
\end{proposition}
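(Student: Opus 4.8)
The plan is to exploit the special structure of the modified weight $\gamma$: on the late interval $[T/2,T]$ one has $\gamma\equiv\Theta$, hence $\Phi=\varphi$ and $\sigma=\eta$, while on the early interval $[0,T/2]$ the weight $\gamma(t,a)=\Theta(T/2,a)$ is frozen in time, so $\partial_t\Phi=0$ there. Accordingly I would split $(0,T)$ at $t=T/2$ and treat the two pieces by different tools, gluing them through the slice $t=T/2$: on $[T/2,T]$ I would inherit the estimate of Corollary \ref{Cor}, whereas on $[0,T/2]$ I would run a parabolic energy (dissipation) estimate. Two elementary weight comparisons make this possible. First, arguing as in the derivation of \eqref{compar_varphi_eta} and using that $\gamma$ is nonincreasing in $a$ while $\psi$ is increasing and negative, one checks that $\Phi\le\widehat{\Phi}(0)$ on all of $Q$ and that $\varphi\le\widehat{\Phi}(0)$ on $[T/2,T]\times(0,A)\times(0,1)$. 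Second, since $\widehat{\Phi}(0)$ is a fixed negative constant and $\eta$ is bounded above on $[T/2,T]$, one has $e^{2s\eta}\le C(s)\,e^{2s\widehat{\Phi}(0)}$; this is precisely why the final constant is allowed to depend on $s$.

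On $[T/2,T]$ I would apply Corollary \ref{Cor}. To pass from the Carleman integrand to the plain weighted term $\int_{T/2}^{T}\!\int e^{2s\Phi}v^2$, I would observe that wherever $s^3\Theta^3 x^2/k\ge c>0$, that is, away from the spatial degeneracy and for all $a$ and $t$ (including near $t=T$ and $a=0$, where $\Theta\to+\infty$), one simply has $e^{2s\varphi}v^2\le c^{-1}s^3\Theta^3\tfrac{x^2}{k}v^2 e^{2s\varphi}$; near the degeneracy $x=0$ the missing contribution is recovered from the first Carleman term $s\Theta k v_x^2 e^{2s\varphi}$ by the weighted Hardy--Poincar\'e inequality available in the degenerate setting. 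In this way the integral $\int_{T/2}^{T}\!\int e^{2s\Phi}v^2$ is bounded by the right-hand side of \eqref{modif_Carl_estimate} without any additional term.

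On $[0,T/2]$ I would multiply the first equation of \eqref{sys-adj-nonhom} by $v$ and integrate over $Q_{A,1}$. Since $v(t,A,x)=0$, integration by parts in $a$ produces the nonnegative boundary term $\int_0^1 v^2(t,0,x)\,dx$; integration by parts in $x$, using the Dirichlet conditions, produces $2\int\!\int k v_x^2\ge0$; the zeroth-order term has the good sign because $\mu\ge0$; and the contributions of $\beta\,v(t,0,x)$ and of $g$ are handled by Young's inequality. This yields a differential inequality of the form $\tfrac{d}{dt}\|v(t)\|^2\ge -C\|v\|^2-C\int_0^1 v^2(t,0,x)\,dx-\|g(t)\|^2$, and a backward Gronwall argument on $[0,T/2]$ bounds both $\|v(0)\|^2$ and $\int_0^{T/2}\|v(t)\|^2\,dt$ by $\|v(T/2)\|^2$ together with $\int_0^{T/2}\!\int_0^1 v^2(t,0,x)$ and $\int_0^{T/2}\|g\|^2$. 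Multiplying by the constant $e^{2s\widehat{\Phi}(0)}$ and invoking $e^{2s\Phi}\le e^{2s\widehat{\Phi}(0)}$ on $[0,T/2]$ turns these into the first term on the left of \eqref{modif_Carl_estimate} and into the $\Phi$-weighted integral over $[0,T/2]$, at the price of the source and newborn contributions already present on the right.

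It remains to absorb the junction quantity $e^{2s\widehat{\Phi}(0)}\|v(T/2)\|^2$, and this is the step I expect to be the main obstacle. I would control it by the same energy identity integrated against a time cutoff $\zeta$ equal to $1$ on $[0,T/2]$ and vanishing before $t=T$: integrating $\tfrac{d}{dt}(\zeta\|v\|^2)$ over $[T/2,T]$ makes the boundary contribution at $T$ drop and leaves $\|v(T/2)\|^2$ controlled by $\int|\zeta'|\,\|v\|^2$, supported on a compact subinterval of $(T/2,T)$, plus lower-order terms treated by Young's inequality. On that subinterval the weights are bounded below for ages $a\ge\delta$, so the part $\{a\ge\delta\}$ is absorbed by Corollary \ref{Cor} (again with Hardy--Poincar\'e near $x=0$), while the part carried by the small ages $\{a<\delta\}$, where $\Theta\sim a^{-4}$ forces $e^{2s\varphi}\to0$ and no weighted estimate can recover the unweighted $L^2$ norm, is exactly what must be conceded as the localized term $\int_{T_0}^{T}\!\int_0^{\delta}\!\int_0^1 e^{2s\widehat{\Phi}(0)}v^2$. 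The genuine difficulty, and the point where the age variable makes the argument heavier than in \cite{Allal2020}, is precisely this degeneracy of the weight as $a\to0^+$, which is what dictates both the age cutoff $\delta$ and the late-time localization $[T_0,T]$; the remaining work is the bookkeeping needed to keep every unweighted remainder either absorbed on the left or confined to this single term.
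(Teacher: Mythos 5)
Your proposal is correct and follows essentially the same strategy as the paper's proof: an energy/cut-off estimate on the early time interval combined with the comparison $\Phi\le\widehat{\Phi}(0)$, the Hardy--Poincar\'e inequality together with Corollary \ref{Cor} (and the conversion $e^{2s\eta}\le C(s)e^{2s\widehat{\Phi}(0)}$, $s^2\Theta^2e^{2s\eta}\le s^2\gamma^2e^{2s\sigma}$) to recover the plain $\Phi$-weighted integral on the late interval, and an age-split at $\delta$ that concedes exactly the localized term $\int_{T_0}^{T}\int_0^{\delta}\int_0^1 e^{2s\widehat{\Phi}(0)}v^2$. The only detail to tidy up is that your junction cutoff $\zeta$ must have its transition region contained in $[T_0,T^*]\subset[T_0,T]$ (not merely in a compact subinterval of $(T/2,T)$) so that the unabsorbed small-age remainder lands inside the conceded term; the paper arranges this automatically by using a single cutoff equal to $1$ on $[0,T_0]$ and vanishing on $[T^*,T]$ and applying the well-posedness estimate \eqref{stimau} to $\xi e^{s\widehat{\Phi}(0)}v$, in place of your two-step Gronwall-plus-junction argument.
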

\begin{proof} Fix $T^* \in (T_0, T)$
and $ \xi \in C^{\infty}[0, T]$ be a smooth cut-off function such that
\begin{equation}\label{Xi}
\begin{aligned} 
\left\{
\begin{array}{lll}
0\leq \xi(t) \leq 1, & \text{for} \; t\in [0, T],\\
\xi(t)=1, & t\in\big[0,T_0\big],\\
\xi(t)=0, &  \; t\in \big[T^*, T\big]
\end{array}
\right. 
\end{aligned}
\end{equation}
and define $ w = \tilde{\xi} v$,  where $ \tilde{\xi} =  \xi  e^{s\widehat{\Phi}(0)}$, where $v $ solves \eqref{sys-adj-nonhom}.

Hence $w$ satisfies
\begin{equation}\label{sys-adj-nonhom-w}
\begin{cases}
\ds \frac{\partial w}{\partial t} + \frac{\partial w}{\partial a}
+(k(x)w_{x})_x-\mu(t, a, x)w =\hat{g},& (t,x,a) \in  Q,
\\[5pt]
w(t,a,0)=w(t,a,1) =0, &(t,a) \in Q_{T,A},\\
w(T,a,x) = 0, &(a,x) \in Q_{A,1}, \\
  w(t,A,x)=0, & (t,x) \in Q_{T,1}
\end{cases}
\end{equation}
where $\hat{g}:= \tilde \xi_t v + \tilde \xi g  - \beta(a,x)w(t,0,x)$.

By \eqref{stimau} applied to the above system, there exists a positive constant $C$ such that
\begin{equation*}
\sup_{t \in [0,T]} \|w(t)\|^2_{L^2(Q_{A,1})} \leq C \int_{Q}\hat{g}^2  \,dxdadt,
\end{equation*}
which yields
\begin{equation}\label{energy-w}
\begin{aligned}
&\|w(0)\|^2_{L^2(Q_{A,1})} +\|w\|^2_{L^2(Q)} \leq C  \int_{Q} \left(\tilde{\xi}_t v  + \tilde{\xi} g- \beta(a,x)  \xi  e^{s\widehat{\Phi}(0)} v (t,0,x)\right)^2  \,dxdadt.
\end{aligned}
\end{equation}
Moreover, one can see that
\begin{equation}\label{left-enrgy-1}
\| w(0)\|_{L^2(Q_{A,1})}^2 = \| e^{s\widehat{\Phi}(0)} v (0)\|_{L^2(Q_{A,1})}^2 
\end{equation}
and
\begin{equation}\label{left-enrgy-w-2}
\begin{aligned}
 \|w\|^2_{L^2(Q)} &=   \int_{0}^{T^*}\int_0^A\int_0^1\xi^2  e^{2s\widehat{\Phi}(0)} v ^2 dxdadt \\
 &\geq \int_{0}^{T_0}\int_0^A\int_0^1 v ^2 e^{2s\Phi}  \,dxdadt 
\end{aligned}
\end{equation}
since $ \Phi \leq \widehat{\Phi}(0) \; \text{in} \; Q$. 

We also have
\begin{equation}\label{right-enrgy-w}
\begin{aligned}
&\int_{Q}  \left( \tilde{\xi}_t v + \tilde{\xi} g- \beta(a,x)  \xi  e^{s\widehat{\Phi}(0)}v(t,0,x)\right)^2 \,dxdadt \notag\\ 
& \leq C \bigg( \int_{Q}  (\xi_t)^2  e^{2s\widehat{\Phi}(0)} v^2  \,dxda dt
 + \int_0^{T^*}\int_0^A\int_0^1   \xi^2 e^{2s\widehat{\Phi}(0)} g^2 \,dxdadt\\ &+ \|\beta\|^2_{\infty}\int_0^{T^*}\int_0^A\int_0^1 \xi^2 e^{2s\hat\Phi(0)}v^2(t,0,x) dxdadt\bigg).
\end{aligned}
\end{equation}
Observing
that $ \supp \xi_t\subset [T_0, T^*]$, we deduce
\begin{equation}\label{right-energy-w-2}
\begin{aligned}
 \int_{Q}  (\xi_t)^2 e^{2s\widehat{\Phi}(0)}v^2  \,dxdadt &\le  C\int_{T_0}^{T^*} \int_0^A\int_0^1e^{2s\widehat{\Phi}(0)} v^2 dxdadt.
\end{aligned}
\end{equation}
Hence, by the estimates \eqref{energy-w}-\eqref{right-energy-w-2} and the fact that $ \supp \xi \subset [0, T^*]$, we find that
\begin{equation}\label{estim-obs-v-1}
\begin{aligned}
\|e^{s\widehat{\Phi}(0)}  v(0) &\|_{L^2(Q_{A,1})}^2  + \int_{0}^{T_0}\int_0^A\int_0^1 v^2 e^{2s\Phi}  \,dxdadt \\
& \leq C \Big(\int_{T_0}^{T^*}\int_0^A\int_0^1    v^2 e^{2s\widehat{\Phi}(0)}  \,dxdadt + \int_{0}^{T^*}\int_0^A\int_0^1e^{2s\hat \Phi(0)}  g^2 \,dxdadt\\
&+ \|\beta\|^2_{\infty} \int_{0}^{T^*}\int_0^A\int_0^1 e^{2s\widehat{\Phi}(0)}  v^2(t,0,x) dxdadt \Big).
\end{aligned}
\end{equation}
Now, we estimate the first term in the right-hand side of \eqref{estim-obs-v-1}. To this purpose,
fixed $\delta \in(0,A)$, observe that
\begin{equation}\label{inequality_delta_Phi}
- \varphi (t,a,x) \le -\Phi^*(T^*, \delta)
\end{equation}
for all $t \in [T_0, T^*]$, for all $a \in [\delta,A]$ and for all $x \in [0,1]$ and
\begin{equation}\label{=}
\Phi= \varphi \; \text{in} \; \left( T_0, T\right)\times Q_{A,1},
\end{equation}
since $\gamma(t,a) =\Theta(t,a)$ in $\left( T_0, T\right)\times(0,A)$,
Thus, by \eqref{inequality_delta_Phi} and \eqref{=}, we have
\begin{equation}\label{disv}
\begin{aligned}
&\int_{T_0}^{T^*}\int_0^A\int_0^1  v^2    e^{2s \widehat{\Phi}(0)}\,dxdadt  =\int_{T_0}^{T^*}\left(\int_0^\delta + \int_\delta^A\right)\int_0^1  v^2    e^{2s \widehat{\Phi}(0)}\,dxdadt\\
&= \int_{T_0}^{T^*}\int_0^\delta\int_0^1v^2e^{2s \widehat{\Phi}(0)}\, dxdadt+\int_{T_0}^{T^*}\int_\delta^A\int_0^1   v^2 e^{2s [\widehat{\Phi}(0)-\Phi]} e^{2s\Phi}\,dxdadt\\
&\leq \int_{T_0}^{T^*}\int_0^\delta\int_0^1 v^2 e^{2s\widehat{\Phi}(0)}
\, dxdadt +  e^{2s [\widehat{\Phi}(0)-\Phi^*(T^*,\delta)]} \int_{T_0}^{T^*}\int_\delta^A\int_0^1 v^2 e^{2s\Phi}\,dxdadt\\
&\leq \int_{T_0}^{T^*}\int_0^\delta\int_0^1 v^2 e^{2s\widehat{\Phi}(0)}
\, dxdadt + e^{2s [\widehat{\Phi}(0)-\Phi^*(T^*,\delta)]} \int_{T_0}^T\int_0^A\int_0^1  v^2 e^{2s\varphi}  \,dxdadt.
\end{aligned}
\end{equation}
Now, using the monotonicity of the function $x \mapsto \displaystyle \frac{x^2}{k(x)}$ and the Hardy-Poincar\'e inequality given in \cite[Proposition 2.1]{Alabau2006}, we have
\begin{align}
\int_{0}^{1} v^2 e^{2s\varphi}\,dx & \le \frac{1}{k(1)} \int_{0}^{1} \frac{k(x)}{x^{2}} (v e^{s\varphi})^2\,dx \notag \\ 
&\leq C \int_{0}^{1} k(x) (v e^{s\varphi})_x^2\,dx.
\end{align}
Hence, since $\varphi_x(t,a,x) = \Theta(t,a)\ds\frac{x}{k(x)}$, it follows that
\begin{align}
\int_0^1 v^2  e^{2s\varphi}\,dx & \leq C \int_{0}^{1}  \Big(k(x) v_x^2 + s^2 \Theta^2 \frac{x^2}{k(x)} v^2 \Big) e^{2s\varphi}\,dx.
\end{align}
By the previous inequality, \eqref{Carl_estimate-2} and \eqref{disv}, one has for $s$ large enough and for a positive constant $C$,
\begin{equation}\label{disv1}
\begin{aligned}
&\int_{T_0}^{T^*}\int_0^A\int_0^1  v^2  e^{2s \widehat{\Phi}(0)}\,dxdadt  \\
&
\leq \int_{T_0}^{T^*}\int_0^\delta\int_0^1 v^2 e^{2s\widehat{\Phi}(0)}
\, dxdadt + e^{2s [\widehat{\Phi}(0)-\Phi^*(T^*,\delta)]} \int_{T_0}^T\int_0^A\int_0^1  v^2 e^{2s\varphi}  \,dxdadt\\ &
\leq \int_{T_0}^{T^*}\int_0^\delta\int_0^1 v^2 e^{2s\widehat{\Phi}(0)}
 \, dxdadt \\
 &+Ce^{2s [\widehat{\Phi}(0)-\Phi^*(T^*,\delta)]}  \int_{0}^{T}\int_0^A\int_{0}^{1}  \Big(sk(x) v_x^2 + s^3 \Theta^2 \frac{x^2}{k(x)} v^2 \Big) e^{2s \varphi}\,dx\\
&\leq \int_{T_0}^{T^*}\int_0^\delta\int_0^1 v^2 e^{2s\widehat{\Phi}(0)}
 \, dxdadt +Ce^{-2s \Phi^*(T^*,\delta)}\int_{Q}e^{2s\hat \Phi(0)} g^{2}dxdadt \\
 &+Ce^{-2s\Phi^*(T^*,\delta)}\left( \int_{Q}e^{2s\hat \Phi(0)}  v^2(t,0,x) dxdadt +\int_{0}^{T} \int_0^A\int_{\omega} s^2 \Theta^2 v^2 e^{2s \eta}  dx dadt\right).
\end{aligned}
\end{equation}
Now, since the function $ s \rightarrow s^h e^{cs},$ with $h\geq 0$ and  $c<0$, is nonincreasing for larger values of $s$, then, from the fact that $\gamma \le \Theta $ in $(0, T) \times (0, A)$ we get that,  
\begin{equation}\label{dispesi}
 (s\Theta)^h e^{2s\eta}  \leq (s\gamma)^h e^{2s\sigma}, \quad \text{in} \; Q,
 \end{equation}
for $s$ large enough. Hence,
\[
\int_{0}^{T} \int_0^A\int_{\omega} s^2 \Theta^2 v^2 e^{2s\eta}  dx dadt\le\int_{0}^{T} \int_0^A\int_{\omega} s^2 \gamma^2 v^2 e^{2s\sigma}  dx dadt.
\]
Hence, \eqref{disv1} becomes
\begin{equation}\label{stima2new}
\begin{aligned}
&\int_{T_0}^{T^*}\int_0^A\int_0^1  v^2  e^{2s \widehat{\Phi}(0)}\,dxdadt \\
&\leq \int_{T_0}^{T^*}\int_0^\delta\int_0^1 v^2 e^{2s\widehat{\Phi}(0)}
 \, dxdadt +Ce^{-2s \Phi^*(T^*,\delta)} \int_{Q}e^{2s\hat \Phi(0)} g^{2}  dxdadt \\
 &+Ce^{-2s \Phi^*(T^*,\delta)}\left( \int_{Q} e^{2s\hat \Phi(0)} v^2(t,0,x)  dxdadt +\int_{0}^{T} \int_0^A\int_{\omega} s^2 \gamma^2 v^2 e^{2s\sigma}  dx dadt\right).
\end{aligned}
\end{equation}

This, together with the estimates \eqref{estim-obs-v-1} and \eqref{stima2new}  implies that
\begin{equation*}
\begin{aligned}
&\|e^{s\widehat{\Phi}(0)}  v(0) \|_{L^2(Q_{A,1})}^2  + \int_Q v^2 e^{2s\Phi}  \,dxdadt \\
&=\|e^{s\widehat{\Phi}(0)}v(0) \|_{L^2(Q_{A,1})}^2  + \int_{0}^{T_0}\int_0^A\int_0^1 v^2 e^{2s\Phi}  \,dxdadt +  \int_{T_0}^T\int_0^A\int_0^1 v^2e^{2s\varphi}   \,dxdadt \\
&\text{(proceeding as in \eqref{disv1})}\\
&  \leq C(s, T^*, \delta)\bigg(\int_{Q}e^{2s\hat \Phi(0)} g^{2}  dxdadt + \int_{0}^{T}\int_0^1 e^{2s\hat \Phi(0)} v^2(t,0,x) dxdt \\
&  + \int_0^T \int_0^A\int_{\omega} s^2 \gamma^2 v^2 e^{2s\sigma}  dx dadt + \int_{T_0}^{T^*}\int_0^{\delta}\int_0^1e^{2s\hat \Phi(0)}  v^2  dxdadt\bigg).
\end{aligned}
\end{equation*}

Thus the thesis follows.
\end{proof}
From now on we make the following hypothesis
\begin{assumptions}\label{conditionbeta} There exists  
$\bar a < A$
 such that 
\begin{equation}\label{conditionbeta1}
\beta(a, x)=0 \;  \text{for all $(a, x) \in [0, \bar a]\times [0,1]$}.
\end{equation}
\end{assumptions} 
Thanks to this assumption, we can prove the next estimate.
 \begin{proposition}\label{Prop_v(0)} Assume Hypotheses $\ref{ratesAss}$, $\ref{BAss01}$, $\ref{conditionbeta}$, $\bar a \le T$ and fix $T_0 \in [T-\bar a, T)$. Then, for all $\zeta \in (0,A)$ there exist
 two positive constants $C=C(s, \zeta, T_0, \bar a)$ and $s_0$ such that every solution $v \in \mathcal Z_T$ of  
\eqref{sys-adj-nonhom}
satisfies
\[
\begin{aligned}
&\int_{0}^{T}\int_0^1 v^2(t,0,x) dxdt \le C\int_{T-\bar a}^T\int_0^1 \int_0^{T-t}g^2(t+\tau, \tau,x) d\tau dxdt\\
& + C\Big(\int_Q g^2dxdadt+ \int_{{T-\bar a}}^{T} \int_0^A\int_{\omega} s^2 \gamma^2 v^2 e^{2s\sigma}  dx dadt \Big)\\
&+C\Big( \int_0^{\bar a}\int_0^1 v_T^2dxda+   \int_{T_0}^{T}\int_0^\zeta \int_0^1 v^2 dxdadt \Big).
\end{aligned}
\]
for all $s \geq s_0$. 
\end{proposition}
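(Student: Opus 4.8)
The plan is to exploit the transport structure of \eqref{sys-adj-nonhom}: the operator $\partial_t+\partial_a$ is the derivative along the characteristic lines $\sigma\mapsto(t+\sigma,\sigma)$, and Hypothesis \ref{conditionbeta} is tailored so that on the age slab $\{0\le a\le\bar a\}$ the birth coupling $\beta(a,x)v(t,0,x)$ drops out and $v$ solves there the uncoupled equation \eqref{adjoint}. Writing $W(\sigma):=v(t+\sigma,\sigma,\cdot)\in L^2(0,1)$ for the trace of $v$ on the characteristic issued from $(t,0)$, I would first note that, as long as $\sigma\le\bar a$, $W$ satisfies $W'(\sigma)+\mathcal A_0 W(\sigma)=\mu(t+\sigma,\sigma,\cdot)W(\sigma)+g(t+\sigma,\sigma,\cdot)$ with the homogeneous Dirichlet data inherited from $v(t,a,0)=v(t,a,1)=0$. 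Multiplying by $W$ and integrating over $(0,1)$, the boundary flux vanishes and $-\langle\mathcal A_0 W,W\rangle=\int_0^1 kW_x^2\,dx\ge0$, while $\mu\ge0$; hence $\frac{d}{d\sigma}\|W(\sigma)\|_{L^2(0,1)}^2\ge-\|W(\sigma)\|_{L^2(0,1)}^2-\|g(t+\sigma,\sigma,\cdot)\|_{L^2(0,1)}^2$. Integrating this differential inequality downward from any level $\sigma=a\le\bar a$ to $\sigma=0$ (the stable, decreasing–age direction) yields the pointwise–in–$(t,x)$ bound
\[
\|v(t,0,\cdot)\|_{L^2(0,1)}^2\le C\Big(\|v(t+a,a,\cdot)\|_{L^2(0,1)}^2+\int_0^a\|g(t+\sigma,\sigma,\cdot)\|_{L^2(0,1)}^2\,d\sigma\Big),
\]
valid whenever $0<a\le\bar a$ and $t+a\le T$.

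Second, I would split $\int_0^T\|v(t,0,\cdot)\|_{L^2(0,1)}^2\,dt=\int_{T-\bar a}^T+\int_0^{T-\bar a}$. On the near–terminal range $t\in(T-\bar a,T)$ the characteristic issued from $(t,0)$ leaves $Q$ through the face $t=T$ at age $a_*=T-t<\bar a$, still inside the uncoupled slab; taking $a=a_*$ in the pointwise bound gives $\|v(t,0,\cdot)\|^2\le C(\|v_T(T-t,\cdot)\|^2+\int_0^{T-t}\|g(t+\sigma,\sigma,\cdot)\|^2\,d\sigma)$. Integrating in $t$ and changing variables $a=T-t$ (which, since $\bar a\le T$, sweeps all of $(0,\bar a)$) turns the first term into $\int_0^{\bar a}\int_0^1 v_T^2\,dx\,da$ and the second into $\int_{T-\bar a}^T\int_0^1\int_0^{T-t}g^2(t+\tau,\tau,x)\,d\tau\,dx\,dt$, i.e. exactly the $v_T$–term and the first term of the claim.

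Third, on the complementary range $t\in(0,T-\bar a)$ the characteristic reaches the top of the slab $a=\bar a$ at the interior time $t+\bar a<T$ before meeting any terminal face. Here I would take $a=\bar a$ in the pointwise bound and integrate in $t$; the source contribution is $\le C\int_Q g^2$, while $\int_0^{T-\bar a}\|v(t+\bar a,\bar a,\cdot)\|^2\,dt=\int_{\bar a}^T\|v(s,\bar a,\cdot)\|^2\,ds$ reduces the whole far range to a control of the \emph{age trace at $a=\bar a$}. To bound this trace I would propagate characteristics from the level $a=\bar a$ toward the terminal set in the stable decreasing–age sense, anchoring them either at $t=T$ or at the face $a=A$ where $v$ vanishes; since $\beta$ is now active, the same energy computation produces, besides the source and a term already estimated in the second step, an interior quadratic term in $v$ that I would localize through the Caccioppoli–type inequality of Lemma \ref{lemma_caccio} together with the good, non–exploding part $(\gamma,\sigma)$ of the weights near $t=T$. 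This is what manufactures the weighted observation $\int_{T-\bar a}^T\int_0^A\int_\omega s^2\gamma^2 v^2 e^{2s\sigma}$ and the low–age near–terminal remainder $\int_{T_0}^T\int_0^\zeta\int_0^1 v^2$, the constraint $T_0\ge T-\bar a$ dictating how much of the slab is left to the interior term.

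The main obstacle is precisely this age–$\bar a$ trace, i.e. the contribution of the characteristics issued far from $t=T$. Such characteristics never reach the low–age terminal datum $v_T|_{a\le\bar a}$ while the birth coupling is switched off, so $v(t,0,\cdot)$ cannot be estimated by the data alone and one is forced to feed interior information on $v$ back through the coupled region $a>\bar a$. Closing this feedback (a Gronwall–type bootstrap that re-uses the near–terminal estimate of the second step), while keeping the degenerate operator $\mathcal A_0$ at the purely $L^2(0,1)$–energy level since no pointwise–in–$x$ analysis is available, and while arranging that only the near–terminal, small–age mass of $v$ survives rather than an uncontrollable full–cylinder integral, is the delicate and computation-heavy heart of the proof; it is also where Hypothesis \ref{conditionbeta} together with $\bar a\le T$ and $T_0\ge T-\bar a$ enter in an essential way.
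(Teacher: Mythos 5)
Your treatment of the near-terminal range $t\in(T-\bar a,T)$ is exactly the paper's: there the characteristic from $(t,0)$ stays in the slab $a\le\bar a$ where $\beta\equiv 0$ (Hypothesis \ref{conditionbeta}), reaches $t=T$ at age $T-t<\bar a$, and the mild/energy representation yields precisely the terms $\int_0^{\bar a}\int_0^1 v_T^2$ and $\int_{T-\bar a}^T\int_0^1\int_0^{T-t}g^2(t+\tau,\tau,x)\,d\tau\,dx\,dt$. The divergence, and the problem, is in your third step. The paper does \emph{not} reduce $\int_0^{T-\bar a}\|v(t,0,\cdot)\|^2\,dt$ to the age trace at $a=\bar a$; it performs a weighted energy estimate (with $w=e^{\varsigma a}v$, $\varsigma$ large to absorb the $\beta$-coupling) over the whole cylinder $(0,\tilde T)\times(0,A)\times(0,1)$, which bounds that integral by the \emph{time} trace $\int_{Q_{A,1}}v^2(\tilde T,a,x)\,dx\,da$ plus $\int g^2$; this time trace is then propagated forward to $t\in(T_0,T^*)$ by a second weighted energy estimate, split in age at $\zeta$, and the high-age part $(\zeta,A)$ is absorbed by the Carleman estimate \eqref{Carl_estimate-2} restated on $(\tilde T,T)$ — never touching $v_T$ at ages above $\bar a$.

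Your reduction to $\int_{\bar a}^{T}\|v(s,\bar a,\cdot)\|^2\,ds$ followed by ``anchoring the characteristics at $t=T$'' has a concrete defect: a characteristic issued from $(s,\bar a)$ that reaches the face $t=T$ does so at age $\bar a+(T-s)>\bar a$, so its terminal anchor is $v_T$ evaluated at ages \emph{strictly greater} than $\bar a$. The statement you must prove allows only $\int_0^{\bar a}\int_0^1 v_T^2$ on the right-hand side (this restriction is the whole point of Hypothesis \ref{conditionbeta} and of the functional $\Gamma$ in the controllability argument), and you give no mechanism for eliminating or replacing that high-age terminal contribution; the Caccioppoli inequality of Lemma \ref{lemma_caccio} and the weights $(\gamma,\sigma)$ localize interior quadratic terms in $v$, not terminal data. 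Moreover, the closing of the feedback through $v(\cdot,0,\cdot)$ and the emergence of the specific remainder $\int_{T_0}^T\int_0^\zeta\int_0^1 v^2$ (which in the paper comes from the low-age region where the Carleman weight $\Theta\sim a^{-4}$ gives no information) are asserted rather than carried out. As it stands, the heart of the proposition — estimating $\int_0^{T-\bar a}\|v(t,0,\cdot)\|^2\,dt$ using only $v_T|_{(0,\bar a)}$, the source, the localized observation, and the small-age near-terminal remainder — is missing.
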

\begin{proof} Let $(S(t))_{t \ge0}$ be the semigroup generated by the operator $\mathcal A_0 -\mu Id$ for all $u \in D(\mathcal A_0)$  ($Id$ is the identity operator). Observe that, since the semigroup generated by $\mathcal A_0$ is a contraction semigroup and thanks to the hypothesis on $\mu$, we have that also the semigroup generated by $\mathcal A_0 -\mu Id$ is bounded. Now, set $\tilde T:= T- \bar a$ and
\begin{equation}\label{Gamma}
\Gamma:= \min \{\bar a, \Gamma_{A,T}\},
\end{equation}
where
$\Gamma_{A,T}:= A -a +t-\tilde T$. Then, as in \cite{fJMPA} and using the method of characteristic lines, one can prove
the following implicit formula for $v$ solution of \eqref{sys-adj-nonhom1}: here it should be \eqref{sys-adj-nonhom}

 if $t \ge  \tilde T + a$
\begin{equation}\label{implicitformula}
v(t,a, \cdot)=S(T-t)v_T(T+a-t, \cdot)- \int_a^{T-t+a} S(s-a) g(t-a+s,s)ds;
\end{equation}
if $t <  \tilde T + a$ and $\Gamma\!= \!\bar a$
\begin{equation}\label{implicitformula1}
v(t,a, \cdot)=
S(T-t) v_T(T+a-t, \cdot) +\int_a^{T+a-t}S(s-a)\left[\beta(s, \cdot)v(s+t-a, 0, \cdot) - g(s+t-a,s)\right] ds;
\end{equation}
finally if $t <  \tilde T + a$ and $\Gamma\!= \!\Gamma_{A,T}$
\begin{equation}\label{implicitformula2}
v(t,a, \cdot) = \int_a^AS(s-a)\left[\beta(s, \cdot)v(s+t-a, 0, \cdot) - g(s+t-a,s)\right]  ds.
\end{equation}
In particular, it results
\begin{equation}\label{v(0)}
v(t,0, \cdot):= S(T-t) v_T(T-t, \cdot) -  \int_0^{T-t} S(s) g(t+s,s)ds,
\end{equation}
if $t \ge T-\bar a$.
Obviously,
\begin{equation}\label{eq1}
\int_0^T \int_0^1 v^2(t,0,x) dxdt =\left( \int_0^{\tilde T} + \int_{\tilde T}^T\right) \int_0^1v^2(t,0,x)  dxdt.
\end{equation}
 Now, we estimate $\int_{\tilde T}^T \int_0^1v^2(t,0,x) dxdt$. By \eqref{v(0)}, we have
\begin{equation}\label{eq2}
\begin{aligned}
&\int_{\tilde T}^T \int_0^1 v^2(t,0,x) dxdt\le C \int_0^{\bar a}\int_0^1v_T^2(a,x)dxda+  C \int_{\tilde T}^T\int_0^1\left(\int_0^{T-t} g^2(t+\tau, \tau,x)  d\tau\right)dxdt.
\end{aligned}
\end{equation}
In order to estimate $\int_0^{\tilde T} \int_0^1v^2(t,0,x) dx dt$, we  define, for $\varsigma >0$, the function $w= e^{\varsigma a }v$.
Then $w$ satisfies
\begin{equation}\label{h=0'}
\begin{cases}
\ds \frac{\partial w}{\partial t} + \frac{\partial w}{\partial a}
+(k(x)w_{x})_x-(\mu(t, a, x)+ \varsigma) w =-\beta(a,x)w(t,0,x)+ e^{\varsigma a }g ,& (t,x,a) \in   Q,
\\[5pt]
w(t,a,0)=w(t,a,1) =0, &(t,a) \in  Q_{T,A},\\
w(T,a,x) =e^{\varsigma a} v_T(a,x), &(a,x) \in Q_{A,1}, \\
w(t,A,x)=0, & (t,x) \in  Q_{T,1},\\
w(t,0,x)=v(t,0,x), & (t,x) \in  Q_{T,1}.
\end{cases}
\end{equation}
Hence, multiplying the equation of \eqref{h=0'} by $-\ds w$ and integrating by parts on $\tilde Q:= (0,\tilde T) \times(0,A) \times(0,1)$, it results
\[
\begin{aligned}
&-\frac{1}{2}\int_{Q_{A,1}} w^2(\tilde T,a,x) dxda + \frac{1}{2} \int_0^{ \tilde T}\int_0^1 w^2(\tau,0,x)dx d\tau\\
&+ \varsigma \int_{\tilde Q}w^2(\tau,a,x) dxdad\tau \le \int_{\tilde Q}\beta w(\tau,0,x)wdxdad\tau - \int_{\tilde Q} e^{\varsigma a }gwdxdad\tau
\\
& \le \|\beta\|_{L^\infty(Q)}\frac{1}{\epsilon}\int_{ \tilde Q}w^2 dxdad\tau+ \epsilon A\|\beta\|_{L^\infty(Q)} \int_0^{\tilde T}\int_0^1 w^2(\tau,0,x)dxd\tau
\\
&+\epsilon\int_{ \tilde Q} g^2dxdad\tau+ \frac{ e^{2\varsigma A}}{\epsilon} \int_{\tilde Q} w^2dxdad\tau
\end{aligned}
\]
for $\epsilon >0$. Hence,
\[
\begin{aligned}
& \frac{1}{2} \int_0^{\tilde T}\int_0^1 w^2(\tau,0,x) dx d\tau+ \varsigma \int_{\tilde Q}w^2(\tau,a,x) dxdad\tau \\
&\le \frac{\|\beta\|_{L^\infty(Q)}+e^{2\varsigma A}}{\epsilon}\int_{\tilde Q}w^2dxdad\tau+ \epsilon A\|\beta\|_{L^\infty(Q)} \int_0^{\tilde T}\int_0^1 w^2(\tau,0,x)dxd\tau\\
&+\frac{1}{2}\int_{Q_{A,1}} w^2(\tilde T,a,x) dxda
+  \epsilon\int_{\tilde Q} g^2dxdad\tau.
\end{aligned}
\]
Choosing $\ds\epsilon= \frac{1}{4\|\beta\|_{L^\infty(Q)}A}$,  $\ds \varsigma =\frac{ \|\beta\|_{L^\infty(Q)}+e^{2\varsigma A}}{\epsilon}$  and recalling that $w =e^{\varsigma a}v$,  we have
\begin{equation}\label{3.56}
\begin{aligned}
 \frac{1}{4} \int_0^{ \tilde T}\int_0^1 v^2(\tau,0,x) dx d\tau &\le C(A, \|\beta\|_{L^\infty(Q)}) \Big( \int_{Q_{A,1}}  v^2(\tilde T,a,x) dxda
+ \int_{\tilde Q} g^2dxdad\tau \Big).
\end{aligned}
\end{equation}
Now, we will estimate $\int_{Q_{A,1}}  v^2(\tilde T,a,x) dxda$. To this purpose, define, for $\nu >0$, the function $z= e^{\nu t }v$.
Then $z$ satisfies
\begin{equation}\label{h=0''}
\begin{cases}
\ds \frac{\partial z}{\partial t} + \frac{\partial z}{\partial a}
+(k(x)z_{x})_x-(\mu(t, a, x)+ \nu) z =-\beta(a,x)z(t,0,x)+ e^{\nu t} g,& (t,x,a) \in  \hat Q,
\\[5pt]
z(t,a,0)=z(t,a,1) =0, &(t,a) \in \hat Q_{T,A},\\
  z(T,a,x) = e^{\nu T}v_T(a,x), &(a,x) \in Q_{A,1}, \\
  z(t,A,x)=0, & (t,x) \in \hat Q_{T,1},
\end{cases}
\end{equation}
where $\hat Q:= (\tilde T, T) \times Q_{A,1}$, $\hat Q_{T,A}:= (\tilde T, T) \times (0,A)$ and $\hat Q_{T,1}:= (\tilde T,T)\times (0,1)$.
Multiplying the equation of \eqref{h=0''} by $-\ds z$ and integrating by parts on $Q_t:= (\tilde T,t) \times(0,A) \times(0,1)$, it results
\begin{equation}\label{3.56new}
\begin{aligned}
&-\frac{1}{2}\int_{Q_{A,1}} z^2(t,a,x) dxda + \frac{e^{\nu  \tilde T}}{2} \int_{Q_{A,1}}  v^2(\tilde T,a,x) dxda + \frac{1}{2} \int_{\tilde T}^t\int_0^1  z^2(\tau,0,x) dx d\tau\\
&+ \nu \int_{Q_t} z^2(\tau,a,x) dxdad\tau \le \int_{Q_t} \beta z(\tau,0,x)zdxdad\tau - \int_{Q_t}e^{\nu \tau } g zdxdad\tau 
\\
& \le\frac{ \|\beta\|_{L^\infty(Q)}+ e^{2\nu T}}{\epsilon}\int_{Q_t}z^2dxdad\tau+ \epsilon A\|\beta\|_{L^\infty(Q)} \int_{\tilde T}^t\int_0^1 z^2(\tau,0,x)dxd\tau + \epsilon\int_{Q_t}g^2dxdad\tau,
\end{aligned}
\end{equation}
for $\epsilon >0$. Choosing $\ds\epsilon= \frac{1}{2\|\beta\|_{L^\infty(Q)}A}$ and $\ds\nu=\frac{ \|\beta\|_{L^\infty(Q)}+ e^{2 \nu T}}{\epsilon}$, 
 we have
\begin{equation}\label{eq10}
\begin{aligned}
 &\int_{Q_{A,1}}  v^2(\tilde T,a,x) dxda  \le C\int_{Q_{A,1}}z^2(t,a,x) dxda  + \epsilon\int_{Q_t}g^2dxdad\tau  \\
 &\le C  \Big( \int_{Q_{A,1}}v^2(t,a,x) dxda+ \int_{Q_t}g^2dxdad\tau \Big).
\end{aligned}
\end{equation}
Now, take $\zeta \in (0, A)$ and fix $T^*\in (T_0,T)$. Then, proceeding as in \cite{fJMPA} or in \cite{fAnona} and integrating the previous inequality over $\ds \left[{T_0}, {T^*} \right]$, one has
\begin{equation}\label{t=0_new}
\begin{aligned}
 \int_{Q_{A,1}} v^2(\tilde T,a,x) dxda  &\le C\int_{{T_0}}^{{T^*}} \int_{Q_{A,1}} v^2(t,a,x) dxdadt+  \epsilon C\int_{T_0}^{{T^*}} \int_{Q_{A,1}} g^2dxdadt\\
 &=  C \int_{{T_0}}^{{T^*}} \left(\int_0^\zeta + \int_\zeta^A \right)\int_0^1 v^2(t,a,x) dxdadt+  C\int_Q g^2dxdadt.
\end{aligned}
\end{equation}
Now, consider the term $\ds\int_{{T_0}}^{{T^*}}  \int_\zeta^A\int_0^1 v^2(t,a,x) dxdadt$. Proceeding as in \eqref{disv} - \eqref{disv1}, applying Carleman inequality \eqref{Carl_estimate-2} and \eqref{dispesi}, one has for $s$ large enough and for a positive constant $C$
\begin{equation}\label{estimav}
\begin{aligned}
&\int_{{T_0}}^{{T^*}}  \int_\zeta^A\int_0^1v^2(t,a,x)  dxdadt  \le e^{-2s\Phi^*(T^*, \zeta)} \int_{{T_0}}^{{T^*}}  \int_\zeta^A\int_0^1 v^2(t,a,x)e^{2s\tilde \varphi} dxdadt\\
&\leq  e^{-2s\Phi^*(T^*, \zeta)} \int_{{T-\bar a}}^{T} \int_\zeta^A\int_0^1\left(\tilde \Theta k v_x^2+ \tilde\Theta^3\frac{x^2}{k(x)}v^2 \right) e^{2s\tilde \varphi}  dxdadt\\
&\le  Ce^{-2s\Phi^*(T^*, \zeta)}\left(\int_{\hat Q}g^{2} e^{2s\tilde \eta} dxdadt +\int_{{T-\bar a}}^{T} \int_0^A\int_{\omega} s^2 \tilde \Theta^2 v^2 e^{2s\tilde \eta}  dx dadt\right) \\
 &+Ce^{-2s\Phi^*(T^*, \zeta)}\int_{\hat Q} v^2(t,0,x) e^{2s\tilde \eta} dxdadt\\
 &\le  Ce^{-2s\Phi^*(T^*, \zeta)} \Big(\int_{\hat Q}g^{2}  dxdadt  + \int_{{T-\bar a}}^{T} \int_0^A\int_{\omega} s^2 \gamma^2 v^2 e^{2s\sigma}\Big)  dx dadt\\
&+ Ce^{-2s\Phi^*(T^*, \zeta)} \int_{{T-\bar a}}^{T} \int_0^1 v^2(t,0,x) dxdt,
\end{aligned}
\end{equation}
where $\hat Q$ is defined as before, 
$\tilde \varphi (t,a,x)= \tilde \Theta(t,a) \psi(x)$, $\tilde \eta (t,a,x)= \tilde \Theta(t,a) \Psi(x)$ and $\tilde \Theta(t,a):=\ds\frac{1}{[(t-\tilde T)(T-t)]^4a^4}$.
In the last inequality we have used that
$$  \tilde\eta \leq \eta \leq \sigma \quad \text{in} \, Q .$$
Now, since the semigroup $(S(t))_{t \ge0}$ is bounded and $T-\bar a< T_0 <t$, one has, as in \eqref{eq2}:
\begin{equation}\label{t=02new}
\begin{aligned}
&\int_{{T-\bar a}}^{T} \int_0^1 v^2(t,0,x)dxdt\le C\int_0^{\bar a}\int_0^1v_T^2 dxda+  C \int_{\tilde T}^T\int_0^1\int_0^{T-t}g^2(t+\tau, \tau,x) d\tau dxdt.
\end{aligned}
\end{equation}
Hence, from \eqref{estimav} and \eqref{t=02new}, we get
\begin{equation}\label{eq3}
\begin{aligned}
\int_{{T_0}}^{{T^*}}  \int_\zeta^A\int_0^1 v^2  dxdadt &\leq C e^{-2s\Phi^*(T^*, \zeta)}\left( \int_{\hat Q}g^{2}  dxdadt+ \int_{T-\bar a}^T\int_0^1\int_0^{T-t}g^2(t+\tau, \tau,x) d\tau dxdt\right)\\
&+ C e^{-2s\Phi^*(T^*, \zeta)}  \Big( \int_{{T-\bar a}}^{T}  \int_0^A\int_{\omega} s^2 \gamma^2 v^2 e^{2s\sigma}  dx dadt+\int_0^{\bar a}\int_0^1 v_T^2 dxda\Big).
\end{aligned}
\end{equation}
Adding \eqref{t=0_new} and \eqref{eq3}, 
we infer that
\begin{equation}\label{tondo}
\begin{aligned}
\int_{Q_{A,1}}v^2(\tilde T,a,x) dxda &\le C\left( \int_{T_0}^{T^*}\int_0^\zeta \int_0^1 v^2 dxdadt  +  \int_Q g^2 dxdadt\right) \\
&+ C e^{-2s\Phi^*(T^*, \zeta)}  \Big( \int_{{T-\bar a}}^{T}\int_0^1 \int_0^{T-t}g^2(t+\tau, \tau,x) d\tau dxdt\\
&+ \int_0^{\bar a}\int_0^1 v_T^2 dxda +\int_{{T-\bar a}}^{T} \int_0^A\int_{\omega} s^2 \gamma^2 v^2  e^{2s\sigma}   dx dadt\Big).
\end{aligned}
\end{equation}
Thus, by \eqref{3.56} and \eqref{tondo}
\[
\begin{aligned}
&\int_0^{\tilde T} \int_0^1v^2(t,0,x) dxdt \le
 C\left(\int_{Q_{A,1}}  v^2(\tilde T,a,x) dxda 
+  \int_{ Q} g^2 dxdadt\right)\\
&\le C \left( \int_{T_0}^{T^*}\int_0^\zeta \int_0^1 v^2  dxdadt  +\int_Q g^2dxdadt\right)\\
&+ C e^{-2s\Phi^*(T^*, \zeta)}  \Big(\int_{\tilde T}^T\int_0^1 \int_0^{T-t}g^2(t+\tau, \tau,x) d\tau dxdt\\
&+ \int_0^{\bar a}\int_0^1 v_T^2 dxda + \int_{{T-\bar a}}^{{T}} \int_0^A\int_{\omega} s^2 \gamma^2 v^2 e^{2s\sigma}  dx dadt\Big).
\end{aligned}
\]
By the previous inequality, \eqref{eq1} and \eqref{eq2}, the thesis follows.
\end{proof}

As a consequence of Propositions \ref{Prop_modif_Carl_estimate_1} and \ref{Prop_v(0)}, taking $ T_0 \in \left[\max\left\{T-\bar a, \ds \frac{T}{2}\right\}, T \right)$ $\Big($ observe that $\max\left\{T-\bar a, \ds \frac{T}{2}\right\}= T-\bar a$ if $\bar a < \ds \frac{T}{2}$$\Big)$ and $\zeta=\delta$, we have the next result. 

\begin{proposition}\label{Lemma_modif_Carl_estimate_1} Assume Hypotheses $\ref{ratesAss}$, $\ref{BAss01}$, $\ref{conditionbeta}$,  $\bar a \le T$ and fix $T_0 \in \left[\max\left\{T-\bar a, \ds \frac{T}{2}\right\}, T\right)$ with $T_0 < T^*$. Then, for all $\zeta \in (0,A)$,
there exist two positive constants $C = C(s, T_0, T^*, T,\bar a, \zeta) $ and $s_0$ such that every solution $v \in \mathcal Z_T$ of \eqref{sys-adj-nonhom} satisfies
\begin{equation}\label{modif_Carl_estimate_2}
\begin{aligned}
&\| e^{s\widehat{\Phi}(0)} v(0) \|_{L^2(Q_{A,1})}^2  + \int_Qe^{2s \Phi} v^2   \,dxdadt \\
&  \leq C\bigg(\int_Q  e^{2s\hat \Phi(0)} g^2dxdadt +\int_{T-\bar a}^T\int_0^1\int_0^{T-t}e^{2s\hat \Phi(0)} g^2(t+\tau, \tau,x) d\tau dxdt+ \int_0^{\bar a}\int_0^1 e^{2s\hat \Phi(0)} v_T^2 dxda \\
& + \int_0^T \int_0^A\int_{\omega} s^2 \gamma^2 v^2e^{2s\sigma}   dx dadt + \int_{T_0}^{T}\int_0^{\zeta}\int_0^1e^{2s\hat \Phi(0)}  v^2 dxdadt\bigg)
\end{aligned}
\end{equation} 
for all $s \geq s_0$.
\end{proposition}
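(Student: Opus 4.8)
The proof is essentially an assembly of the two preceding propositions, so the plan is to start from the modified Carleman estimate \eqref{modif_Carl_estimate} of Proposition \ref{Prop_modif_Carl_estimate_1}, applied with $\delta=\zeta$, and then to absorb the boundary observation term $\int_0^T\int_0^1 e^{2s\widehat\Phi(0)}v^2(t,0,x)\,dxdt$ on its right-hand side by invoking the estimate of Proposition \ref{Prop_v(0)}. Both propositions are applicable under the stated hypotheses precisely because we take $T_0\in\left[\max\{T-\bar a,\frac{T}{2}\},T\right)$: the bound $T_0\ge\frac{T}{2}$ is the one demanded by Proposition \ref{Prop_modif_Carl_estimate_1}, while $T_0\ge T-\bar a$ is the one demanded by Proposition \ref{Prop_v(0)}, and the common choice $\zeta=\delta$ makes the local observation regions coincide.

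The crucial observation, which I would state first, is that $e^{2s\widehat\Phi(0)}$ is a constant factor that can be moved freely in and out of integrals. Indeed, since $\gamma(t,a)=\Theta(\frac{T}{2},a)$ for $t\in[0,\frac{T}{2}]$ (see \eqref{modif_weight_funct}), the number $\widehat\Phi(0)=\gamma(0,A)\psi(1)$ is a constant, and it is strictly negative because $\gamma(0,A)>0$ while $\psi(1)=-\|p\|_{L^\infty(0,1)}<0$. Hence $0<e^{2s\widehat\Phi(0)}\le 1$ for every $s>0$, so this factor commutes with integration and may be discarded whenever it multiplies a nonnegative quantity. In particular $\int_0^T\int_0^1 e^{2s\widehat\Phi(0)}v^2(t,0,x)\,dxdt=e^{2s\widehat\Phi(0)}\int_0^T\int_0^1 v^2(t,0,x)\,dxdt$, and I would bound the last integral by multiplying the whole estimate of Proposition \ref{Prop_v(0)} by the constant $e^{2s\widehat\Phi(0)}$.

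Carrying out the substitution, each term produced by Proposition \ref{Prop_v(0)} acquires the factor $e^{2s\widehat\Phi(0)}$. For the two source terms $\int_Q g^2$ and $\int_{T-\bar a}^T\int_0^1\int_0^{T-t}g^2(t+\tau,\tau,x)\,d\tau\,dx\,dt$ and for the datum term $\int_0^{\bar a}\int_0^1 v_T^2$, this factor is exactly the weight appearing in the target estimate \eqref{modif_Carl_estimate_2}. For the control term $\int_{T-\bar a}^T\int_0^A\int_\omega s^2\gamma^2 v^2 e^{2s\sigma}$ the extra factor $e^{2s\widehat\Phi(0)}\le 1$ is simply dropped, and since $[T-\bar a,T]\subset[0,T]$ this term is absorbed into the full control term of \eqref{modif_Carl_estimate_2}. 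Finally, the local observation term $e^{2s\widehat\Phi(0)}\int_{T_0}^T\int_0^\zeta\int_0^1 v^2=\int_{T_0}^T\int_0^\zeta\int_0^1 e^{2s\widehat\Phi(0)}v^2$ has the same time--age domain $[T_0,T]\times[0,\zeta]$ (recall $\zeta=\delta$) as the corresponding term already present in \eqref{modif_Carl_estimate}, so the two coincide and merge.

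There is no genuine analytic difficulty here: the statement is a bookkeeping combination of \eqref{modif_Carl_estimate} with the estimate of Proposition \ref{Prop_v(0)}. The only point I would treat carefully, and the one where a slip is easiest, is the consistent handling of the constant weight $e^{2s\widehat\Phi(0)}$ --- verifying that it is truly a constant (so it commutes with integration) and that it does not exceed $1$ (so the spurious factors on the control and observation terms can be discarded without spoiling the final form). Collecting all the contributions and relabelling the positive constant as $C=C(s,T_0,T^*,T,\bar a,\zeta)$ then yields \eqref{modif_Carl_estimate_2} for all $s\ge s_0$.
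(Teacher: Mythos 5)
Your proposal is correct and is exactly the argument the paper intends: the paper states this proposition as an immediate consequence of Propositions \ref{Prop_modif_Carl_estimate_1} and \ref{Prop_v(0)} with $\zeta=\delta$ and $T_0$ chosen in the intersection of the two admissible ranges, which is precisely your assembly. Your careful verification that $\widehat\Phi(0)<0$, so that $e^{2s\widehat\Phi(0)}\in(0,1]$ is a constant that can be inserted or discarded as needed, is the only point requiring any attention, and you handle it correctly.
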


As a consequence of the previous result, we have:

 \begin{proposition}\label{Prop_modif_Carl_estimate_new} Assume Hypotheses $\ref{ratesAss}$, $\ref{BAss01}$, $\ref{conditionbeta}$ and $\bar a \le T$. Then, there exists two positive constants $C=C(s, T,\bar a) $ and $s_0$ such that every solution $v \in \mathcal Z_T$ of  
\eqref{sys-adj-nonhom}
satisfies
\begin{equation}\label{modif_Carl_estimate_new}
\begin{aligned}
&\| e^{s\widehat{\Phi}(0)} v(0) \|_{L^2(Q_{A,1})}^2  + \int_Q e^{2s \Phi} v^2   \,dxdadt\\ 
&   \leq C\bigg(\int_Q e^{2s\hat \Phi(0)} g^2 dxdadt+ \int_0^{\bar a}\int_0^1e^{2s\hat \Phi(0)}  v_T^2dxda  + \int_0^T \int_0^A\int_{\omega} s^2 \gamma^2 v^2 e^{2s\sigma}  dx dadt \bigg)
\end{aligned}
\end{equation}
for all $s \geq s_0$. 
\end{proposition}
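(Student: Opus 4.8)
The plan is to deduce \eqref{modif_Carl_estimate_new} directly from estimate \eqref{modif_Carl_estimate_2} of Proposition \ref{Lemma_modif_Carl_estimate_1} by showing that, for a suitably fixed choice of the parameters $T_0$ and $\zeta$, the two extra terms on the right-hand side of \eqref{modif_Carl_estimate_2} — the convolution term $\int_{T-\bar a}^T\int_0^1\int_0^{T-t} e^{2s\widehat\Phi(0)} g^2(t+\tau,\tau,x)\,d\tau\,dx\,dt$ and the lower-order term $\int_{T_0}^{T}\int_0^{\zeta}\int_0^1 e^{2s\widehat\Phi(0)} v^2\,dx\,da\,dt$ — can both be absorbed into the terms $\int_Q e^{2s\widehat\Phi(0)} g^2$ and $\int_0^{\bar a}\int_0^1 e^{2s\widehat\Phi(0)} v_T^2$ already present on the right-hand side. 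The structural fact exploited throughout is that $\widehat\Phi(0)=\gamma(0,A)\psi(1)$ is a constant, so the weight $e^{2s\widehat\Phi(0)}$ factors out of every integral below and need only be reinstated at the end.

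First I would dispose of the convolution term. Since the weight is constant it equals $e^{2s\widehat\Phi(0)}\int_{T-\bar a}^T\int_0^1\int_0^{T-t} g^2(t+\tau,\tau,x)\,d\tau\,dx\,dt$, and the change of variables $(u,w)=(t+\tau,\tau)$, of unit Jacobian, maps the domain injectively into $\{(u,w): u\in[T-\bar a,T],\ w\in[0,\bar a]\}\subset(0,T)\times(0,A)$. Hence it is bounded by $e^{2s\widehat\Phi(0)}\int_Q g^2\,dx\,da\,dt=\int_Q e^{2s\widehat\Phi(0)} g^2\,dx\,da\,dt$, an admissible term in \eqref{modif_Carl_estimate_new}.

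The main work, and the principal obstacle, is the elimination of $\int_{T_0}^{T}\int_0^{\zeta}\int_0^1 v^2$: this cannot be absorbed into the left-hand side $\int_Q e^{2s\Phi}v^2$, because on $[T_0,T]$ the weight $e^{2s\Phi}=e^{2s\varphi}$ decays to $0$ as $t\to T^-$, whereas the extra term carries the non-degenerate constant weight $e^{2s\widehat\Phi(0)}$. Instead I would invoke Hypothesis \ref{conditionbeta} ($\beta\equiv0$ on $[0,\bar a]\times[0,1]$) together with the characteristic representation established in the proof of Proposition \ref{Prop_v(0)}. Concretely, I fix $T_0\in(\max\{T-\bar a,T/2\},T)$ strictly larger than $\widetilde T:=T-\bar a$ (possible since $\bar a\le T$ gives $\widetilde T\ge0$), and then choose $\zeta\in(0,\min\{A,\,T_0-\widetilde T\})$. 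For every $(t,a)\in[T_0,T]\times[0,\zeta]$ one has $t-a\ge T_0-\zeta\ge\widetilde T$, i.e. $t\ge\widetilde T+a$; thus the implicit formula \eqref{implicitformula} applies, and because the forward characteristic issued from $(t,a)$ remains in the age band $[0,\bar a]$ on which $\beta$ vanishes, it carries no boundary contribution from $v(\cdot,0,\cdot)$ and reads $v(t,a,\cdot)=S(T-t)v_T(T+a-t,\cdot)-\int_a^{T-t+a}S(s-a)g(t-a+s,s)\,ds$.

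Using the boundedness of the semigroup $(S(t))_{t\ge0}$, squaring, and applying Cauchy--Schwarz to the integral term, I would bound $\int_{T_0}^{T}\int_0^{\zeta}\int_0^1 v^2$ by $C\int_{T_0}^T\int_0^\zeta\|v_T(T+a-t,\cdot)\|_{L^2(0,1)}^2\,da\,dt+C\int_{T_0}^T\int_0^\zeta\int_a^{T-t+a}\|g(t-a+s,s,\cdot)\|_{L^2(0,1)}^2\,ds\,da\,dt$. In the first integral the substitution $b=T+a-t$ has range $b\le T+\zeta-T_0\le\bar a$, precisely because $\zeta\le T_0-\widetilde T$, so it is controlled by $C\int_0^{\bar a}\int_0^1 v_T^2$; in the second, a further change of variables identifies the integrand with $g^2$ over a subregion of $Q$ of bounded multiplicity, whence it is controlled by $C\int_Q g^2$. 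Reinstating the constant factor $e^{2s\widehat\Phi(0)}$ and inserting these two bounds, together with the bound already obtained for the convolution term, back into \eqref{modif_Carl_estimate_2} yields exactly \eqref{modif_Carl_estimate_new}, with $C$ depending only on $s,T,\bar a$ through the now-fixed $T_0$, $\zeta$ and $T^*$. The delicate point is the joint choice of $T_0$ and $\zeta$: it must force the characteristics to exit through $\{t=T\}$ (so that $v$ is expressed through $v_T$ and $g$ alone) while keeping the sampling age $T+a-t$ below $\bar a$; both requirements reduce to the single inequality $\zeta\le T_0-\widetilde T$, which is feasible exactly because $\bar a\le T$ and $T_0$ may be taken strictly above $\widetilde T$.
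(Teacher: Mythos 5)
Your proposal is correct and follows essentially the same route as the paper: it specializes Proposition \ref{Lemma_modif_Carl_estimate_1} to a choice of $T_0>\widetilde T$ and $\zeta\le T_0-\widetilde T$ (the paper takes $T_0=T-\bar a/2$, $\zeta\le\bar a/2$, a particular instance of your condition), and then kills the residual term $\int_{T_0}^{T}\int_0^{\zeta}\int_0^1 v^2$ via the characteristic formula \eqref{implicitformula}, the vanishing of $\beta$ on ages below $\bar a$, and the boundedness of the semigroup, while the convolution term is absorbed into $\int_Q g^2$ by the same change of variables as in \eqref{bo}. No gaps.
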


\begin{proof} 
The thesis follows by Proposition \ref{Lemma_modif_Carl_estimate_1} taking  
$T_0 = \ds T-\frac{\bar a}{2}$,  and $\zeta \le \frac{\bar a}{2}$; indeed, it is sufficient to estimate
\[
 \int_{T-\frac{\bar a}{2}}^{T}\int_0^{\zeta }\int_0^1 v^2 dxdadt.
\]
In particular, we will prove 
 that there exists $C>0$ such that
\begin{equation}\label{t=06}
\begin{aligned}
&\ds \int_{{T - \frac{\bar a}{2}}}^{ T}  \int_0^{\zeta }\int_0^1 v^2(t,a,x) dxdadt  \le  C \int_0^{\bar a} \int_0^1   v_T^2(a,x)dxda  +  C \int_{{T - \frac{\bar a}{2}}}^{T}   \int_0^{\bar a} \int_0^1 g^2(t, \tau,x) da d\tau  dt.
\end{aligned}
\end{equation}
In order to prove \eqref{t=06}, we use \eqref{implicitformula}; indeed
\[
t \ge T- \frac{ \bar a}{2}= T- \bar a + \frac{\bar a}{2}\ge T- \bar a + \zeta  \ge T- \bar a + a = \tilde T +a.
\]
Taking into account these considerations, using the assumption on $\beta$, the boundedness of $(S(t))_{t \ge0}$ and \eqref{inequality_delta_Phi}, one has:
\begin{equation*}
\begin{aligned}
&\ds \int_{{T - \frac{\bar a}{2}}}^{T}  \int_0^{\zeta }\int_0^1v^2(t,a,x)  dxdadt\le C  \int_{ T - \frac{\bar a}{2}}^{T}  \int_0^{\zeta }\int_0^1 v_T^2(T+a-t,x)dxdadt\\
&+  C \int_{{T - \frac{\bar a}{2}}}^{T}  \int_0^{\zeta }\int_0^1  \int_a^{T-t+a} g^2(t-a+\tau,\tau,x)d\tau dxdadt
\\&\le C  \int_{0}^{{\frac{\bar a}{2}}}  \int_0^{\zeta }\int_0^1   v_T^2(a+z,x) dxdadz  +  C \int_{{T - \frac{\bar a}{2}}}^{T}  \int_0^1  \int_0^{\bar a}g^2(z, \tau,x) d\tau dx dz\\
& \le  C \int_0^{\bar a} \int_0^1   v_T^2(a,x)dxda  +  C \int_{{T - \frac{\bar a}{2}}}^{T}  \int_0^1  \int_0^{\bar a}g^2(z, \tau,x) d\tau dx dz.
\end{aligned}
\end{equation*}
Hence, we obtain \eqref{t=06}.
Analogously, one can prove that
\begin{equation}\label{bo}
\int_{T-\bar a}^T\int_0^1 \int_0^{T-t}g^2(t+\tau, \tau,x) d\tau dxdt\le \int_{T-\bar a}^T\int_0^{\bar a} \int_0^1 g^2(z,\tau,x)dxd\tau dz.
\end{equation}
 and the thesis follows multiplying \eqref{t=06} and \eqref{bo} by the term $e^{2s\hat \Phi(0)} $.
\end{proof}
\begin{remark}\label{rem}
Observe that Proposition \ref{Prop_modif_Carl_estimate_new} can be applied in particular
to the following system:
\begin{equation}\label{sys-adj-nonhom1}
\begin{cases}
\ds \frac{\partial v}{\partial t} + \frac{\partial v}{\partial a}
+(k(x)v_{x})_x-\mu(t, a, x)v =- \beta(a,x) v(t,0,x) +g ,& (t,a,x) \in  Q,
\\[5pt]
v(t,a,0)=v(t,a,1) =0, &(t,a) \in Q_{T,A},\\
  v(t,A,x)=0, & (t,x) \in Q_{T,1},\\
  v(T,a,x) = v_T(a,x) \in D(\mathcal A), &(a,x) \in Q_{A,1},
\end{cases}
\end{equation}
where  
\begin{equation}\label{stimag}
g(t,a,x)=- \int_t^Tb(\tau,t,a,x) w(\tau,a,x) d\tau,
\end{equation}
with $w \in L^2(Q)$.
\end{remark}
\subsection{The case $k(1)=0$}

In this subsection we will consider the case when $k$ degenerates only at $x=1$ and as before assume that $b$, $\mu$ and $\beta$ satisfy \eqref{3}. Therefore, on $k$ we make additional assumptions:
\begin{assumptions}\label{BAss01'} The function
$k\in C^0[0,1]\bigcap C^1[0,1)$  is such that $k(1)=0$, $k>0$ on
$[0,1)$ and there exists $M_2 \in (0,2)$ such that
 $(x-1)k'(x) \le M_2 k(x)$ for a.e. $x\in [0,1]$ for all $x \in [0,1]$. Moreover, if $M_2\ge 1$ one has to require that there exists $\theta \in (0,M_2]$, such that the function $x \ds \mapsto \frac{k(x)}{|1-x|^\theta}$ is nonincreasing near $1$.
\end{assumptions}
Now, we consider, as in \cite{fAnona}, the following weight functions 
\begin{equation}\label{varphi2}
\bar\varphi(t,a,x):=\Theta(t,a)\bar\psi(x),
\end{equation}
where $\Theta$ is defined as in \eqref{Theta},
\begin{equation}\label{psi2} 
\bar\psi(x):=\bar p(x) - 2 \|\bar p\|_{L^\infty(0,1)}
\end{equation}
and
$
\displaystyle \bar p(x):=\int_{0}^x\frac{y-1}{k(y)}dy$. As before,
$\bar\varphi(t,a, x) <0$ for all $(t,a,x) \in Q$ and
$\bar  \varphi(t,a, x)  \rightarrow - \infty \, \text{ as } t \rightarrow
0^+, T^-$  or  $a \rightarrow
0^+$.  We also define
\begin{equation}
\bar \eta(t,a, x)=\Theta(t, a)\bar \Psi(x), \quad  \bar \Psi(x)=e^{\kappa\bar \rho(x)}-e^{2 \kappa\|\bar\rho\|_{\infty}},
\end{equation}
$\ds
(t, a, x) \in Q, \quad \kappa>0 \quad\text {and} \quad\bar \rho(x):= \mathfrak{d} \int_{0}^{x} \frac{1}{k(t)} d t, \quad\text { where} \quad\mathfrak{d}=\left\|k^{\prime}\right\|_{L^{\infty}(0,1)}$.
By taking the parameter $\kappa$ as in \eqref{cond_kappap},
we have $\max\limits_{x \in [0,1]}\bar\psi(x) \le \min\limits_{x \in [0,1]}\bar\Psi(x)$,
thus
\begin{equation}\label{compar_varphi_eta2}
\bar\varphi(t,a,x) \leq \bar\eta(t,a,x), \;\, \text{for all} \;(t,a,x) \in Q.
\end{equation}
Moreover, in this case
$\bar\psi$ is nonincreasing and $\bar \Psi$ is increasing. 
As weight time functions blowing up only at the final time $t=T$, we consider
\begin{equation}\label{modif_weight_funct'}
\begin{aligned}
& \bar\Phi(t,a,x) :=  \gamma(t,a)\bar\psi(x), \qquad \bar\sigma(t,a,x): = \gamma(t,a)\bar\Psi(x),  \quad \forall\; (t,a, x) \in Q, \\
& \widehat{\bar\Phi}(t):= \min_{a\in(0,A]}\max\limits_{x\in[0,1]} \bar\Phi(t,a,x)= \gamma(t,A)\bar\psi(0),  \quad \forall \;t \in (0,T).\\
& \bar\Phi^*(t, a):=\min\limits_{x\in[0,1]}\bar \Phi(t,a,x)= \gamma(t, a)\bar\psi(1),  \quad \forall\; (t,a) \in Q_{T,A},\\
\end{aligned}
\end{equation}
Here $\gamma$ is defined as in \eqref{modif_weight_funct}.  
As in the previous subsection, one can prove the following results.
\begin{corollary}\label{Cor'}
Assume Hypotheses $\ref{ratesAss}$ and $\ref{BAss01'}$. Then,
there exist two strictly positive constants $C$ and $s_0$ such that every
solution $z \in \mathcal Z_T$ of \eqref{sys-adj-nonhom}
satisfies, for all $s \ge s_0$,
\begin{align}\label{Carl_estimate-2'}
\int_{Q} \Big(s \Theta k  v_x^2 & + s^3\Theta^3  \frac{(x-1)^2}{k} v^2\Big)e^{2s\bar\varphi} \, dxdadt \notag\\
&\leq C \Big(\int_{Q}g^{2} e^{2s\bar \eta} dxdadt + \int_Q v^2(t,0,x) e^{2s\bar\eta} dxdadt \Big)\notag\\
&+ C \int_0^T \int_0^A\int_{\omega} s^2 \Theta^2 v^2 e^{2s\bar\eta}  dx dadt.
\end{align}
\end{corollary}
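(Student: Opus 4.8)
The plan is to reproduce verbatim the two–step argument of the subsection on $k(0)=0$, replacing the weights $\varphi,\eta,\psi$ by their barred counterparts $\bar\varphi,\bar\eta,\bar\psi$ and exploiting the symmetry between the degeneracy at $0$ and the one at $1$. Concretely, Corollary \ref{Cor'} stands to the intermediate estimate for system \eqref{adjoint} exactly as Corollary \ref{Cor} stands to Theorem \ref{Cor1}, so the substantive content is the barred analogue of Theorem \ref{Cor1}, after which the corollary follows by a one–line source splitting.

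First I would establish, for $z\in\mathcal Z_T$ solving \eqref{adjoint}, the estimate
\[
\int_Q \Big(s\Theta k z_x^2 + s^3\Theta^3 \frac{(x-1)^2}{k} z^2\Big) e^{2s\bar\varphi}\,dxdadt \le C\Big(\int_Q g^2 e^{2s\bar\eta}\,dxdadt + \int_0^T\int_0^A\int_\omega s^2\Theta^2 z^2 e^{2s\bar\eta}\,dxdadt\Big),
\]
following \cite[Theorem 4.2]{fAnona}: conjugate with the weight by setting $w:=e^{s\bar\varphi}z$, split the conjugated operator into its self–adjoint and skew–adjoint parts, and integrate by parts in $x$, $a$ and $t$. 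The decisive change is that now $\bar\psi_x=\frac{x-1}{k}$, hence $\bar\varphi_x(t,a,x)=\Theta(t,a)\frac{x-1}{k(x)}$, so the degeneracy is located at $x=1$; Hypothesis \ref{BAss01'}, namely the sign condition $(x-1)k'(x)\le M_2k(x)$ together with the monotonicity of $x\mapsto k(x)/|1-x|^\theta$ near $1$ when $M_2\ge1$, plays precisely the role that $xk'\le M_1k$ and the monotonicity of $k(x)/x^\theta$ played at $0$: it keeps the dominant distributed term positive and makes the boundary contributions at $x=1$ vanish. As in the proof of Theorem \ref{Cor1}, the sole deviation from \cite{fAnona} is that Caccioppoli's inequality is replaced by Lemma \ref{lemma_caccio}, which is formulated for an arbitrary strictly negative $\phi\in C^2(0,1)$ and therefore applies unchanged with $\phi=\bar\psi$ (recall $\bar\psi<0$).

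Next, following the proof of Corollary \ref{Cor}, I would rewrite the first equation of \eqref{sys-adj-nonhom} as
\[
\frac{\partial v}{\partial t}+\frac{\partial v}{\partial a}+(k v_x)_x - \mu v = \bar g, \qquad \bar g := g - \beta(a,x)\, v(t,0,x),
\]
apply the estimate above to $v$ with source $\bar g$, and use $(\alpha+\beta)^2\le 2\alpha^2+2\beta^2$ together with $\|\beta\|_\infty<\infty$ to split $\int_Q \bar g^2 e^{2s\bar\eta}$ into the contribution $\int_Q g^2 e^{2s\bar\eta}$ and the contribution $\int_Q v^2(t,0,x)e^{2s\bar\eta}$. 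This yields \eqref{Carl_estimate-2'}.

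The main obstacle is not conceptual but verificational: one must confirm that the boundary terms produced by the integration by parts genuinely vanish at the degenerate endpoint $x=1$ and that the associated lower–order terms there have the correct sign. In the $x=0$ case this rests on $k(x)/x^\theta$ being nondecreasing near $0$; the symmetric ingredient here is that $k(x)/|1-x|^\theta$ be nonincreasing near $1$, which simultaneously controls those boundary contributions and underwrites the Hardy–Poincar\'e inequality of \cite[Proposition 2.1]{Alabau2006} in the form
\[
\int_0^1 \frac{(x-1)^2}{k(x)}\, u^2\,dx \le C\int_0^1 k(x)\, u_x^2\,dx,
\]
which is what generates the weighted zeroth–order term $s^3\Theta^3\frac{(x-1)^2}{k}$ in \eqref{Carl_estimate-2'}. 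Checking these sign and monotonicity conditions at $x=1$ carefully, rather than invoking the symmetry with $x=0$ blindly, is where the real work lies; once they are secured, the computation is word for word that of the case $k(0)=0$.
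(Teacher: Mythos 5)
Your proposal is correct and follows the same route the paper intends: the paper itself only says ``as in the previous subsection'' for Corollary \ref{Cor'}, i.e.\ one first proves the barred analogue of Theorem \ref{Cor1} (following \cite[Theorem 4.2]{fAnona} with Lemma \ref{lemma_caccio} in place of the Caccioppoli inequality, now with the degeneracy and the hypotheses of \ref{BAss01'} located at $x=1$), and then concludes exactly as in Corollary \ref{Cor} by absorbing $\beta(a,x)v(t,0,x)$ into the source term $\bar g$. Your identification of where the symmetric monotonicity condition $k(x)/|1-x|^\theta$ nonincreasing near $1$ enters (boundary terms at the degenerate endpoint and the Hardy--Poincar\'e inequality) matches the paper's setup.
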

Proceeding as in Proposition \ref{Prop_modif_Carl_estimate_1}, one can prove the following result. 
 \begin{proposition}\label{Prop_modif_Carl_estimate_1'} Assume Hypotheses $\ref{ratesAss}$, $\ref{BAss01'}$,  and let $T_0 \in [T/2, T)$ . Then, for all $\delta \in (0,A)$,there exist two positive constants $s_0$  and $C=C(s, T_0,  \delta)$ such that every solution $v \in \mathcal Z_T$ of \eqref{sys-adj-nonhom} satisfies
\begin{equation}\label{modif_Carl_estimate'}
\begin{aligned}
&\|e^{s\widehat{\bar\Phi}(0)}  v(0) \|_{L^2(Q_{A,1})}^2  + \int_Q v^2 e^{2s\bar\Phi}  \,dxdadt \\
&\leq C \bigg(\int_{Q}e^{2s\hat {\bar {\Phi}}(0)} g^{2} dxdadt + \int_0^T \int_0^1e^{2s\hat{\bar{ \Phi}}(0)}  v^2(t,0,x)  dxdt\\
&  + \int_0^T \int_0^A\int_{\omega} s^2 \gamma^2 v^2 e^{2s\bar\sigma}  dx dadt + \int_{T_0}^{T}\int_0^{\delta}\int_0^1e^{2s\hat{ \bar{\Phi}}(0)}  v^2 dxdadt\bigg)
\end{aligned}
\end{equation}
for all $s \geq s_0$. 
\end{proposition}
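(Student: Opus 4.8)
The plan is to follow verbatim the strategy of Proposition \ref{Prop_modif_Carl_estimate_1}, replacing everywhere the weights $\varphi,\Phi,\widehat{\Phi},\Phi^*,\eta,\sigma$ (adapted to the degeneracy at $x=0$) by their barred counterparts $\bar\varphi,\bar\Phi,\widehat{\bar\Phi},\bar\Phi^*,\bar\eta,\bar\sigma$ (adapted to the degeneracy at $x=1$), and using the Carleman estimate \eqref{Carl_estimate-2'} of Corollary \ref{Cor'} in place of \eqref{Carl_estimate-2}. Concretely, first I would fix $T^*\in(T_0,T)$ and a cut-off $\xi\in C^\infty[0,T]$ with $0\le\xi\le1$, $\xi\equiv1$ on $[0,T_0]$ and $\xi\equiv0$ on $[T^*,T]$, set $\tilde\xi=\xi\,e^{s\widehat{\bar\Phi}(0)}$ and define $w=\tilde\xi v$. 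Then $w$ solves the system obtained from \eqref{sys-adj-nonhom} with right-hand side $\hat g=\tilde\xi_t v+\tilde\xi g-\beta(a,x)w(t,0,x)$ and the extra terminal condition $w(T,a,x)=0$. Applying the energy estimate \eqref{stimau} to $w$ yields $\|w(0)\|^2_{L^2(Q_{A,1})}+\|w\|^2_{L^2(Q)}\le C\int_Q\hat g^2$.

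Next I would read off the two left-hand contributions exactly as in the model proof: $\|w(0)\|^2_{L^2(Q_{A,1})}=\|e^{s\widehat{\bar\Phi}(0)}v(0)\|^2_{L^2(Q_{A,1})}$, while the inequality $\bar\Phi\le\widehat{\bar\Phi}(0)$ in $Q$ gives $\|w\|^2_{L^2(Q)}\ge\int_0^{T_0}\int_0^A\int_0^1 v^2 e^{2s\bar\Phi}$. On the right I would expand $\hat g^2$ into the three usual pieces, discard the cut-off factors via $0\le\xi\le1$, and use $\supp\xi_t\subset[T_0,T^*]$ to bound the $\tilde\xi_t v$ term by $C\int_{T_0}^{T^*}\int_0^A\int_0^1 e^{2s\widehat{\bar\Phi}(0)}v^2$. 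This produces the barred analogue of \eqref{estim-obs-v-1}, whose only genuinely non-elementary ingredient is the control of the intermediate term $\int_{T_0}^{T^*}\int_0^A\int_0^1 v^2 e^{2s\widehat{\bar\Phi}(0)}$.

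To treat that term I would split the age integral at $\delta$. On $[0,\delta]$ it is already one of the admissible right-hand side terms of \eqref{modif_Carl_estimate'}; on $[\delta,A]$ I would use that $\bar\Phi=\bar\varphi$ on $(T_0,T)\times Q_{A,1}$ together with the bound $-\bar\varphi\le-\bar\Phi^*(T^*,\delta)$, valid for $t\in[T_0,T^*]$, $a\in[\delta,A]$, $x\in[0,1]$, to factor out $e^{2s[\widehat{\bar\Phi}(0)-\bar\Phi^*(T^*,\delta)]}$ and reduce to $\int_{T_0}^{T}\int_0^A\int_0^1 v^2 e^{2s\bar\varphi}$. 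The key pointwise step is then the barred Hardy--Poincar\'e estimate: since $k$ degenerates at $1$ and $k(0)>0$, and since $x\mapsto(x-1)^2/k(x)$ is monotone with supremum $1/k(0)$ under Hypothesis \ref{BAss01'}, one has
\[
\int_0^1 v^2 e^{2s\bar\varphi}\,dx\le\frac{1}{k(0)}\int_0^1\frac{k(x)}{(x-1)^2}\big(v e^{s\bar\varphi}\big)^2dx\le C\int_0^1 k(x)\big(v e^{s\bar\varphi}\big)_x^2\,dx,
\]
where the last inequality is the Hardy--Poincar\'e inequality of \cite[Proposition 2.1]{Alabau2006} at the endpoint $x=1$. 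Using $\bar\varphi_x=\Theta\,(x-1)/k$, this is bounded by $C\int_0^1\big(k v_x^2+s^2\Theta^2\frac{(x-1)^2}{k}v^2\big)e^{2s\bar\varphi}$, i.e. exactly the left-hand side of \eqref{Carl_estimate-2'}. Applying \eqref{Carl_estimate-2'}, converting the $\Theta,\bar\eta$ weights into $\gamma,\bar\sigma$ through the monotonicity argument behind \eqref{dispesi} (with $\bar\eta\le\bar\sigma$ in place of $\eta\le\sigma$), and inserting the result back gives the barred analogue of \eqref{disv1}; collecting all contributions then yields \eqref{modif_Carl_estimate'}.

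The main obstacle is the barred Hardy--Poincar\'e step: one must verify that, for degeneracy at $x=1$, the correct monotonicity direction of $x\mapsto(x-1)^2/k(x)$ and the correct orientation of the Hardy--Poincar\'e inequality hold under Hypothesis \ref{BAss01'} (note that $\bar\psi$ is now nonincreasing and $\bar\Psi$ increasing, the mirror image of the $x=0$ case). Once this pointwise estimate and the weight comparison $\bar\varphi\le\bar\eta\le\bar\sigma$ coming from \eqref{compar_varphi_eta2} are in hand, every remaining step is a symmetric transcription of the proof of Proposition \ref{Prop_modif_Carl_estimate_1}, and the bookkeeping of the barred weights, though lengthy, is routine.
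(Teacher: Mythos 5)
Your proposal is correct and follows essentially the same route as the paper, which itself only says ``proceeding as in Proposition \ref{Prop_modif_Carl_estimate_1}'' and notes that the sole substantive change is replacing \eqref{inequality_delta_Phi} by $-\bar\varphi(t,a,x)\le-\bar\Phi^*(T^*,\delta)$ on $[T_0,T^*]\times[\delta,A]\times[0,1]$ --- exactly the step you identify. Your treatment of the one genuinely non-trivial transcription, namely that under Hypothesis \ref{BAss01'} the map $x\mapsto(x-1)^2/k(x)$ is nonincreasing (since $(x-1)k'\le M_2k<2k$ makes its derivative $(x-1)\bigl(2k-(x-1)k'\bigr)/k^2\le0$), so that it is bounded by $1/k(0)$ and the Hardy--Poincar\'e inequality applies at the endpoint $x=1$, is also correct.
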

We underline that in the proof of the previous Proposition, in place of \eqref{inequality_delta_Phi}, one has to use the following estimate
\[- \bar\varphi (t,a,x) \le -\bar \Phi^*(\tau, \delta)\]
for  all $(t,a) \in (T_0,\tau) \times (\delta, A)$ with $\tau \in \left(T_0, T\right)$ and  $\delta\in (0,A)$.

As in the previous subsection, one can estimate the two integrals $\int_0^T \int_0^1 v^2(t,0,x)  dxdt$ and  $\int_{T_0}^{T}\int_0^{\delta}\int_0^1 v^2  dxdadt$ improving the previous result in the following sense:

 \begin{proposition}\label{Prop_modif_Carl_estimate_new'} Assume Hypotheses $\ref{ratesAss},$ $\ref{conditionbeta}$,  $\ref{BAss01'}$ and $\bar a \le T$. Then, there exist two positive constants $C=C(s, T, \bar a) $ and $s_0$ such that every solution $v \in \mathcal Z_T$ of  
\eqref{sys-adj-nonhom}
satisfies
\begin{equation}\label{modif_Carl_estimate_new_1}
\begin{aligned}
&\|e^{s\widehat{\bar\Phi}(0)}  v(0) \|_{L^2(Q_{A,1})}^2  + \int_Q v^2 e^{2s\bar\Phi}  \,dxdadt \\
&   \leq C\bigg(\int_Q e^{2s\hat{\bar{ \Phi}}(0)} g^2dxdadt+ \int_0^{\bar a}\int_0^1e^{2s\hat{\bar{ \Phi}}(0)}  v_T^2dxda  + \int_0^T \int_0^A\int_{\omega} s^2 \gamma^2 v^2 e^{2s\bar\sigma}  dx dadt \bigg)
\end{aligned}
\end{equation}
for all $s \geq s_0$. 
\end{proposition}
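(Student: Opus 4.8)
The plan is to reproduce, with the barred weight functions, the three-step argument already carried out for the degeneracy at $x=0$: the trace bound of Proposition \ref{Prop_v(0)}, the combination performed in Proposition \ref{Lemma_modif_Carl_estimate_1}, and the elimination of the localized term in Proposition \ref{Prop_modif_Carl_estimate_new}. Throughout, $\varphi,\eta,\Phi,\sigma,\Phi^*,\widehat\Phi$ are replaced by $\bar\varphi,\bar\eta,\bar\Phi,\bar\sigma,\bar\Phi^*,\widehat{\bar\Phi}$, and the Carleman estimate \eqref{Carl_estimate-2'} of Corollary \ref{Cor'} is used in place of \eqref{Carl_estimate-2}. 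The starting point is the modified estimate \eqref{modif_Carl_estimate'} of Proposition \ref{Prop_modif_Carl_estimate_1'}, which under Hypotheses \ref{ratesAss}, \ref{conditionbeta}, \ref{BAss01'} and $\bar a\le T$ already controls the left-hand side of \eqref{modif_Carl_estimate_new_1} up to the two superfluous terms $\int_0^T\int_0^1 v^2(t,0,x)\,dxdt$ and $\int_{T_0}^T\int_0^\delta\int_0^1 v^2\,dxdadt$.

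First I would establish the exact analogue of Proposition \ref{Prop_v(0)} to absorb the trace term. The essential point is that the implicit representation formulas \eqref{implicitformula}--\eqref{v(0)}, derived by integrating along the characteristics in the age variable, depend only on the semigroup $(S(t))_{t\ge 0}$ generated by $\mathcal A_0-\mu Id$ and on Hypothesis \ref{conditionbeta}; they are insensitive to the location of the degeneracy. Since under \ref{BAss01'} the operator $\mathcal A_0=(k u_x)_x$ is still self-adjoint and nonpositive, the semigroup stays bounded and these formulas, together with the energy identities \eqref{3.56}, \eqref{eq10}, \eqref{t=0_new} and the characteristic bounds \eqref{eq2}, \eqref{t=02new}, carry over verbatim. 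The barred weights enter only in the step \eqref{estimav}: there one uses the monotonicity of $x\mapsto (x-1)^2/k(x)$ and the Hardy--Poincar\'e inequality at $x=1$ from \cite[Proposition 2.1]{Alabau2006}, together with the comparisons $\bar\varphi\le\bar\eta\le\bar\sigma$ following from \eqref{compar_varphi_eta2} and \eqref{dispesi}, and the bound $-\bar\varphi(t,a,x)\le-\bar\Phi^*(\tau,\delta)$ recorded just after Proposition \ref{Prop_modif_Carl_estimate_1'}.

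Combining this trace estimate with \eqref{modif_Carl_estimate'} as in Proposition \ref{Lemma_modif_Carl_estimate_1}, for $T_0\in[\max\{T-\bar a,T/2\},T)$ and $\zeta=\delta$, leaves only the localized term $\int_{T_0}^T\int_0^\zeta\int_0^1 v^2\,dxdadt$. To finish, as in Proposition \ref{Prop_modif_Carl_estimate_new}, I would specialize $T_0=T-\bar a/2$ and $\zeta\le\bar a/2$ and bound this term through the implicit formula \eqref{implicitformula}, which applies since $t\ge T-\bar a/2\ge\tilde T+a$ on the integration domain; using Hypothesis \ref{conditionbeta} and the boundedness of $(S(t))_{t\ge 0}$ reproduces \eqref{t=06} and \eqref{bo}, controlling the term by $\int_0^{\bar a}\int_0^1 v_T^2\,dxda$ and by a source contribution in $g$. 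Multiplying through by $e^{2s\widehat{\bar\Phi}(0)}$ and collecting the resulting bounds yields \eqref{modif_Carl_estimate_new_1}.

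I expect the single genuinely delicate point to be the Hardy--Poincar\'e step within the analogue of \eqref{estimav}: one must confirm that near the degeneracy $x=1$ the ratio $(x-1)^2/k(x)$ is monotone in the sense demanded by \cite[Proposition 2.1]{Alabau2006} under \ref{BAss01'}, and that $v e^{s\bar\varphi}$ indeed belongs to the weighted space $H^1_k(0,1)$ required for that inequality, which is guaranteed by the homogeneous condition $v(t,a,1)=0$. Everything else is a mechanical transcription of the $x=0$ argument, the substantive characteristic and semigroup estimates being unaffected by the change of degenerate endpoint.
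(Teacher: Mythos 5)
Your proposal is correct and follows essentially the same route as the paper, which itself only states that the result is obtained ``as in the previous subsection'' by transcribing Propositions \ref{Prop_v(0)}, \ref{Lemma_modif_Carl_estimate_1} and \ref{Prop_modif_Carl_estimate_new} with the barred weights, Corollary \ref{Cor'} in place of \eqref{Carl_estimate-2}, and the Hardy--Poincar\'e step carried out at $x=1$ via the monotonicity of $x\mapsto (x-1)^2/k(x)$. You have correctly identified the only points where the change of degenerate endpoint actually intervenes.
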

\section{Null controllability for the initial problem}\label{sect_null_control_nonhomog}
In this section we will prove the main result of this paper via a fixed point method. First of all we will prove a null controllability result for the following problem
\begin{equation}\label{problem001}\left\{
\begin{array}{lll}
\displaystyle y_t  + y_a- (k(x)  y_x )_x + \mu(t,a,x) y= \int\limits_0^t b(t,s,a,x)  w(s,a,x) \, ds + f(t,a,x) \chi_\omega & \text{in } Q, \\
y(t,a,1)= y(t,a,0)=0, & \text{in } Q_{T,A},  \\
y(0,a,x)= y_{0}(a,x),  &  \text{in }  Q_{A,1},\\
y(t,0,x)=\int_0^A \beta(a,x) y(t,a,x) da, & \text{in } Q_{T,1},\\
\end{array}
\right.\end{equation}
where $w \in L^2(Q)$ is fixed, then we extend this result to the initial problem.
For this reason we divide this section into two subsetcions. 
\subsection{Null controllability for \eqref{problem001}}
As a consequence of Proposition \ref{Prop_modif_Carl_estimate_new} or Proposition \ref{Prop_modif_Carl_estimate_new'}, we will obtain the null controllability  for  \eqref{sys-nonhom} with more regular initial data. Such result will play a fundamental role in obtaining null controllability for the initial problem \eqref{sys-memory1}. To this aim, following the arguments developed in \cite{Allal2020, AFS2020, FI1996,TG2016}, we denote by $s_0$ the parameter given in Proposition \ref{Prop_modif_Carl_estimate_new} or in Proposition \ref{Prop_modif_Carl_estimate_new'}. 
Now, we are ready to state the null controllability result for \eqref{sys-nonhom} under an additional assumption on the function $b$.
\begin{assumptions}\label{conditionb} Assume that there exists a positive constant $\gamma_0$ 
such that
\[
e^{2\frac{4^4 s \|p\|_{L^\infty(0,1)}}{T^4(T-t)^4a^4}}b \in L^{\infty}((0,T)\times Q),
\]
for all $s \ge \gamma_0$. Here $p$ is the same function of \eqref{psi}.

In this condition, $s$ should be fixed. Note also that, we only need to prove Theorem \ref{Thm_nul_ctrl_nonhomog_nondiv} for a fixed $s$ not for all $s \ge \max\{\gamma_0,s_0\}$.
\end{assumptions} 
\begin{theorem}\label{Thm_nul_ctrl_nonhomog_nondiv}
Assume  Hypotheses $\ref{ratesAss}$, $\ref{BAss01}$, $\ref{conditionbeta}$, $\ref{conditionb}$, $\bar a \le T$ and 
fix $w \in L^2(Q)$. Then, for any $y_0\in L^2(0,A; H^1_k(0,1))$, there exists a control function $f \in L^2(Q)$, such that the associated solution $y\in \mathcal Z_T$ of 
\eqref{problem001}
satisfies 
\[y(T,a,x)= 0, \quad \forall \; \;(a,x) \in  (\bar a, A) \times(0,1).\]

Moreover, there exists a positive constant $C$ such that the couple $(y, f)$ satisfies
\begin{equation}\label{mainestimate_nondiv}
\begin{aligned}
&\int_{Q}y^2e^{-2s\hat \Phi(0)}\,dx\,dt +
\int_0^T\int_0^A \int_\omega s^{-2} \gamma^{-2} f^{2} e^{-2s\sigma(t,x)}\,dx\,da dt + \int_0^{\bar a}\int_0^1 e^{-2s\hat \Phi(0)}  y^2(T,a,x)dxda\\
&\leq C   \Big(\|h e^{-s\Phi}\|^2_{L^2(Q)} +  \|y_0 e^{-s\hat{\Phi}(0)}\|^2_{L^2(Q_{A,1})}\Big),
\end{aligned}
\end{equation}
for all $s \ge \max\{\gamma_0,s_0\}$,
where
 $h$ is defined as
\begin{equation}\label{stimah}
h(t,a,x):=\int\limits_0^t b(t,s,a,x)  w(s,a,x)  ds.
\end{equation}
The same conclusions hold if Hypothesis $\ref{BAss01}$ is substituted by Hypothesis $\ref{BAss01'}$.
\end{theorem}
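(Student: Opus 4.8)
The plan is to deduce the controllability result from the observability-type inequality contained in Proposition \ref{Prop_modif_Carl_estimate_new} (or Proposition \ref{Prop_modif_Carl_estimate_new'}) by a duality argument of Fursikov--Imanuvilov type, exactly in the spirit of \cite{Allal2020, FI1996, TG2016}. Since $w$ is fixed, the source $h$ of \eqref{stimah} is a prescribed element of $L^2(Q)$, so that \eqref{problem001} is nothing but the nonhomogeneous system \eqref{sys-nonhom} with this particular $h$; moreover, as $y_0\in L^2(0,A;H^1_k(0,1))$, Theorem \ref{theorem_existence} guarantees $y\in\mathcal Z_T$ together with \eqref{stimau}. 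First I would consider the backward adjoint system \eqref{sys-adj-nonhom1} with $g\equiv 0$ and final datum $v(T,\cdot,\cdot)=v_T$ supported in $(\bar a,A)\times(0,1)$, and record, by integrating by parts in $t$, $a$ and $x$ (the boundary terms at $x=0,1$ vanishing by the boundary conditions and the degeneracy of $k$, the age terms being handled by $v(t,A,x)=0$ and the birth coupling $y(t,0,x)=\int_0^A\beta y\,da$), the duality identity
\[
\int_{(\bar a,A)\times(0,1)} y(T,a,x)\,v_T\,dxda = \int_{Q_{A,1}} y_0\,v(0)\,dxda + \int_Q h\,v\,dxdadt + \int_0^T\!\!\int_0^A\!\!\int_\omega f\,v\,dxdadt.
\]

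Restricting to such $v_T$ (so that the term $\int_0^{\bar a}\int_0^1 e^{2s\widehat{\Phi}(0)}v_T^2$ in \eqref{modif_Carl_estimate_new} drops out), the inequality of Proposition \ref{Prop_modif_Carl_estimate_new} with $g\equiv0$ reduces to a genuine observability estimate
\[
\|e^{s\widehat{\Phi}(0)}v(0)\|_{L^2(Q_{A,1})}^2 + \int_Q e^{2s\Phi}v^2\,dxdadt \le C\int_0^T\!\!\int_0^A\!\!\int_\omega s^2\gamma^2 v^2 e^{2s\sigma}\,dxdadt.
\]
I would then introduce the quadratic functional
\[
J(v_T)=\tfrac12\int_0^T\!\!\int_0^A\!\!\int_\omega s^2\gamma^2 v^2 e^{2s\sigma}\,dxdadt + \int_{Q_{A,1}} y_0\,v(0)\,dxda + \int_Q h\,v\,dxdadt
\]
on the completion $\mathcal H$ of $\{v_T:\ \supp v_T\subset(\bar a,A)\times(0,1)\}$ with respect to the observation seminorm $(\int_0^T\int_0^A\int_\omega s^2\gamma^2 v^2 e^{2s\sigma})^{1/2}$; the observability inequality shows that this seminorm is in fact a norm on $\mathcal H$ and that the quadratic part of $J$ is coercive and strictly convex.

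The continuity of the two linear terms is where the assumptions on the data enter: by Cauchy--Schwarz,
\[
\Big|\int_{Q_{A,1}} y_0 v(0)\Big|\le \|y_0 e^{-s\widehat{\Phi}(0)}\|_{L^2(Q_{A,1})}\,\|e^{s\widehat{\Phi}(0)}v(0)\|_{L^2(Q_{A,1})},\qquad \Big|\int_Q h v\Big|\le \|h e^{-s\Phi}\|_{L^2(Q)}\,\|e^{s\Phi}v\|_{L^2(Q)},
\]
and both right-hand factors are controlled by the observation norm through the observability inequality above. Here Hypothesis \ref{conditionb} is essential: it guarantees precisely that $h e^{-s\Phi}\in L^2(Q)$, because $e^{-s\Phi}$ blows up as $t\to T^-$ while the assumed decay of the kernel $b$ near $t=T$ compensates this growth (using $|h|\le\int_0^t|b||w|$ and $w\in L^2(Q)$). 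I expect this integrability point to be the main obstacle, since it is the only place where the structure of the memory kernel really matters. Granting it, the Lax--Milgram theorem (equivalently, the direct method of the calculus of variations) yields a unique minimizer $\hat v_T\in\mathcal H$ with associated adjoint solution $\hat v$; setting $f:=s^2\gamma^2 e^{2s\sigma}\hat v\,\chi_\omega$ and comparing the Euler--Lagrange equation of $J$ with the duality identity shows that $\int_{(\bar a,A)\times(0,1)}y(T)v_T=0$ for every admissible $v_T$, that is $y(T,a,x)=0$ on $(\bar a,A)\times(0,1)$.

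Finally, for the quantitative estimate \eqref{mainestimate_nondiv} I would test the Euler--Lagrange equation against $\hat v_T$ itself, obtaining $\int_0^T\int_0^A\int_\omega s^2\gamma^2 e^{2s\sigma}\hat v^2 = -\int_{Q_{A,1}}y_0\hat v(0)-\int_Q h\hat v$; bounding the right-hand side as above and absorbing the observation norm yields
\[
\int_0^T\!\!\int_0^A\!\!\int_\omega s^{-2}\gamma^{-2}e^{-2s\sigma}f^2\,dxdadt = \int_0^T\!\!\int_0^A\!\!\int_\omega s^2\gamma^2 e^{2s\sigma}\hat v^2\,dxdadt \le C\big(\|h e^{-s\Phi}\|_{L^2(Q)}^2+\|y_0 e^{-s\widehat{\Phi}(0)}\|_{L^2(Q_{A,1})}^2\big).
\]
The two remaining terms $\int_Q y^2 e^{-2s\widehat{\Phi}(0)}$ and $\int_0^{\bar a}\int_0^1 e^{-2s\widehat{\Phi}(0)}y^2(T)$ then follow from the energy inequality \eqref{stimau} applied to the trajectory driven by this $f$, after noting that $e^{-s\Phi}\ge1$, that $e^{-s\widehat{\Phi}(0)}$ is a positive constant, and that $s^{-2}\gamma^{-2}e^{-2s\sigma}$ is bounded below on $\omega\times(0,T)\times(0,A)$ for fixed $s$; these facts make $\|y_0\|$, $\|h\|$ and $\|f\chi_\omega\|$ all dominated by the weighted quantities on the right of \eqref{mainestimate_nondiv}. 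The case where Hypothesis \ref{BAss01} is replaced by Hypothesis \ref{BAss01'} is identical, invoking Proposition \ref{Prop_modif_Carl_estimate_new'} and the barred weights $\bar\Phi,\bar\sigma,\widehat{\bar\Phi}$ in place of $\Phi,\sigma,\widehat{\Phi}$.
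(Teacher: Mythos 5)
Your proposal is correct in outline, but it follows the classical HUM route (homogeneous adjoint, minimization over final data) rather than the Fursikov--Imanuvilov construction the paper actually uses, and the two differ in a way worth spelling out. The paper minimizes the weighted functional $J(y,f)$ of \eqref{extremal} over pairs $(y,f)$ already constrained to satisfy \eqref{problem001_new}, and characterizes the minimizer through a dual variable $\bar z$ obtained by Lax--Milgram on the completion $\mathcal P$ of the space $\mathcal P_0$ of smooth adjoint-type functions, with respect to the bilinear form $\Gamma(z_1,z_2)=\int_Q e^{2s\hat\Phi(0)}L_k^\star z_1\,L_k^\star z_2+\int\!\!\int\!\!\int_\omega s^2\gamma^2e^{2s\sigma}z_1z_2+\int_0^{\bar a}\!\int_0^1e^{2s\hat\Phi(0)}z_1(T)z_2(T)$; here the adjoint variable is \emph{not} required to solve the homogeneous equation --- its residual $L_k^\star z$ is one of the penalized quantities --- and the state and control are reconstructed explicitly as $\bar y=e^{2s\hat\Phi(0)}L_k^\star\bar z$, $\bar f=-s^2\gamma^2e^{2s\sigma}\bar z$, $\bar y(T)=e^{2s\hat\Phi(0)}\bar z(T)$ on $(0,\bar a)\times(0,1)$. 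The payoff is that the full left-hand side of \eqref{mainestimate_nondiv}, including the interior term $\int_Q y^2e^{-2s\hat\Phi(0)}$ and the term $\int_0^{\bar a}\!\int_0^1e^{-2s\hat\Phi(0)}y^2(T)$, is literally $\|\bar z\|_{\mathcal P}^2$ and is bounded in one stroke with an $s$-uniform constant coming from the continuity of the linear form $l$; the price is the extra transposition argument \eqref{trans} needed to check that $\bar y$ really is the trajectory driven by $\bar f$. Your HUM variant avoids that transposition step (since $y$ is the genuine solution from the outset and the Euler--Lagrange equation combined with your duality identity gives $y(T)=0$ on $(\bar a,A)\times(0,1)$ directly), but it only delivers the observation term $\int\!\!\int\!\!\int_\omega s^{-2}\gamma^{-2}e^{-2s\sigma}f^2$ natively, so you must recover the other two terms of \eqref{mainestimate_nondiv} a posteriori through the energy estimate \eqref{stimau} and pointwise comparisons of the weights; that works (all your weight comparisons $\Phi\le\hat\Phi(0)$, $\Phi<0$, $s^2\gamma^2e^{2s\sigma}$ bounded are correct), but the resulting constant degrades in $s$, which is harmless here only because Hypothesis \ref{conditionb} fixes $s$ anyway. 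Both routes rest on exactly the same two pillars, the modified Carleman estimate of Proposition \ref{Prop_modif_Carl_estimate_new} (which, as you note, becomes a genuine observability inequality once $g\equiv0$ and $\supp v_T\subset(\bar a,A)\times(0,1)$) and the verification that $he^{-s\Phi}\in L^2(Q)$ via Hypothesis \ref{conditionb}, which you correctly single out and which is precisely the computation \eqref{dis} of the paper.
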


\begin{proof}
Following the ideas in \cite{cara,TG2016}, fixed $s \geq s_0$, let us consider the functional
\begin{equation}\label{extremal}
J(y,f)= \Big(\int_{Q} y^2e^{-2s\hat \Phi(0)}\,dx da dt +
\int_0^T\int_0^A\int_\omega  s^{-2}\gamma^{-2} f^{2} e^{-2s\sigma}dx da dt  + \int_0^{\bar a}\int_0^1 e^{-2s\hat \Phi(0)} y^2(T)dxda\Big),
\end{equation}
with $f\in L^2(Q)$, where $(y,f)$ satisfies
\begin{equation}\label{problem001_new}\left\{
\begin{array}{lll}
\displaystyle y_t  + y_a- (k(x)  y_x )_x + \mu(t,a,x) y= \int\limits_0^t b(t,s,a,x)  w(s,a,x) \, ds + f(t,a,x) \chi_\omega & \text{in } Q, \\
y(t,a,1)= y(t,a,0)=0, & \text{in } Q_{T,A},  \\
y(0,a,x)= y_{0}(a,x),  &  \text{in }  Q_{A,1},\\

y(t,0,x)=\int_0^A \beta(a,x) y(t,a,x) da, & \text{in } Q_{T,1}\\
y(T,a,x)=0 ,&  \text{in }  (\bar a, A)\times (0,1).
\end{array}
\right.\end{equation}
The functional
$J$ attains its minimizer at a unique point  denoted as $(\bar{y},\bar{f})$ (see \cite{Lions,Lionsb}).
Setting
$$
L_{k}y:= y_t  + y_a- (k(x)  y_x )_x + \mu(t,a,x) y \qquad \text{in}\quad Q
$$
and $L_{k}^{\star}$ the (formally) adjoint operator of $L_{k}$, we will first prove that there exists a dual variable $\bar z$ such that
\begin{equation}\label{step0}
\left\{
  \begin{array}{ll}
\bar{y}= e^{2s\hat \Phi(0)} L_k^{\star} \bar{z}, &\quad \text{in}\quad Q,\\
\bar{y}(T)=  e^{2s\hat \Phi(0)} \bar z(T), & \quad \text{in}\quad (0,\bar a)\times (0,1),\\
\bar{f}= - s^2 \gamma^2 e^{2s\sigma}\bar{z},  &\quad \text{in}\quad Q,\\
\bar{z}=0, &\quad \text{on}\quad (0,T)\times (0,A)\times \{0,1\}\cup (0,T)\times \{A\}\times (0,1).
  \end{array}
\right.
\end{equation}

First of all, let us start by introducing the following linear space
$$
\mathcal{P}_0=\big\{z \in C^{\infty}(\overline{Q}): \; z \text{ satisfies } \eqref{sys-adj-nonhom1} \big\}.
$$
Clearly, $\mathcal P_0$ is a vector space. Now, introduce on it the bilinear form $\Gamma$:
\begin{equation*}
\begin{aligned}
\Gamma(z_1,z_2)&= \int_{Q} e^{2s\hat \Phi(0)} L_{k}^{\star} z_1 L_{k}^{\star} z_2\,dxda dt
+\int_0^T\int_0^A \int_\omega  s^{2}\gamma^2 e^{2s\sigma } z_1 z_2\,dxdadt\\
&+\int_0^{\bar a}\int_0^1e^{2s\hat \Phi(0)}  z_1(T,a,x)z_2(T,a,x)dxda.
\end{aligned}
\end{equation*}
In particular
\[
\begin{aligned}
\Gamma(z,z)&= \int_{Q} e^{2s\hat \Phi(0)}( L_{k}^{\star} z)^2\,dxda dt
+\int_0^T\int_0^A \int_\omega  s^{2}\gamma^2 e^{2s\sigma} z^2\,dxdadt+\int_0^{\bar a}\int_0^1e^{2s\hat \Phi(0)} z^2(T,a,x)dxda.
\end{aligned}
\]
Observe that if $\bar y$, $\bar f$ defined in \eqref{step0} satisfy \eqref{problem001_new} then we must  have
\begin{equation}\label{step00}
\Gamma(\bar{z},z)=\int_{Q} hz\,dxdadt + \int_0^A\int_{0}^{1}y_0 z(0)\,dxda,\quad \forall\; z\in \mathcal{P}_0.
\end{equation}
The key idea in this proof is to show that there exists exactly one $\bar z$ satisfying \eqref{step00} in an
appropriate class. We will then define $\bar y$ and $\bar f$ using \eqref{step0} and we will check that the couple
$(\bar y, \bar f)$ satisfies the desired properties.

Observe that the modified Carleman inequality \eqref{modif_Carl_estimate_new} holds for all $z \in \mathcal{P}_{0}$. Consequently, 
\begin{equation}\label{step000}
\begin{aligned}
&\|e^{s\widehat{\Phi}(0)}  z(0) \|_{L^2(Q_{A,1})}^2  + \int_Q z^2 e^{2s\Phi}  \,dxdadt \\
&  \leq C \bigg(\int_Qe^{2s\hat \Phi(0)} g^2dxdadt+ \int_0^{\bar a}\int_0^1 e^{2s\hat \Phi(0)}z^2(T,a,x)   dxda+ \int_0^T \int_0^A\int_{\omega} s^2 \gamma^2 z^2 e^{2s\sigma}  dx dadt\bigg)\\
&= C \Gamma(z,z).
\end{aligned}
\end{equation}
In particular, $\Gamma(\cdot,\cdot)$ is a strictly positive and symmetric bilinear form, that is, $\Gamma(\cdot,\cdot)$ is a scalar product in $\mathcal{P}_{0}$.

Denote by $\mathcal{P}$ the Hilbert space which is the completion of $\mathcal{P}_{0}$ with respect to the norm associated
to $\Gamma(\cdot,\cdot)$ (which we denote by $\|\cdot\|_{\mathcal{P}}$). Let us now consider the linear form $l$, given by
\begin{equation*}
l(z)=\int_{Q} hz\,dxdadt +  \int_0^A\int_{0}^{1}y_0 z(0)\,dxda, \quad
\forall \; z\in \mathcal{P}.
\end{equation*}
Now, observe that $h e^{-s\Phi}\in L^2(Q)$.
Indeed , setting $C_0:=2\frac{4^4}{T^4}\|p\|_{L^\infty(0,1)}$, by Hypothesis \ref{conditionb} and the estimate $e^{-s\Phi}\leq e^{\frac{sC_0}{(T-t)^4a^4}}$, we get that

 \begin{equation}\label{dis}
\begin{aligned}
&\int_{Q}   e^{-2s\Phi}\Big(\int_{0}^{t} b(t,\tau,a,x)  w(\tau,a,x)  d\tau\Big)^2 \,dx da dt \le C_T \int_Q   e^{-2s\Phi}\int\limits_0^t b^2(t,\tau,a,x)  w^2(\tau,a,x)  d\tau dxdadt\\
&
\le C_T \int_Q\int_0^t e^{\frac{2sC_0}{(T-t)^4a^4}}b^2(t,\tau,a,x)  w^2(\tau,a,x)  d\tau dxdadt\le C_T \int_Q w^2 dxdad\tau\\
&
= C_T \, e^{2s\hat \Phi(0)}\int_{Q} e^{-2s\hat \Phi(0)}w^2(\tau, a ,x)dxda d\tau <+\infty.
\end{aligned}
\end{equation}
Thus, by the Cauchy-Schwarz inequality and in view of \eqref{step000}, we have that
\begin{align*}
|l(z)| &\leq \left\|h e^{-s\Phi} \right\|_{L^2(Q)}  \|z e^{s\Phi}\|_{L^2(Q)}
 + \|y_0e^{-s\hat{\Phi}(0)} \|_{L^2(Q_{A,1})} \|z(0) e^{s\hat{\Phi}(0)}\|_{L^2(Q_{A,1})} \\
&\leq C    \Big(\|h e^{-s\Phi}\|_{L^2(Q)} +  \|y_0 e^{-s\hat{\Phi}(0)}\|_{L^2(Q_{A,1})}\Big)\|z\|_{\mathcal{P}}.
\end{align*}
Hence $l$ is a linear continuous form on $\mathcal{P}$ and, in view of Lax-Milgram's Lemma, there
exists one and only one $\bar{z}\in \mathcal{P}$ satisfying
\begin{equation}\label{step001}
\Gamma(\bar{z},z)= l(z),\quad \forall \;z\in \mathcal{P},
\end{equation}
i.e.
\eqref{step00}.
Moreover, we have
\begin{equation}\label{step002}
\|\bar{z}\|_{\mathcal{P}}\leq C \Big(\|h e^{-s\Phi}\|_{L^2(Q)} +  \|y_0 e^{-s\hat{\Phi}(0)}\|_{L^2(Q_{A,1})}\Big).
\end{equation}

Now, define $\bar y$ and $\bar f$ as in \eqref{step0}; thus by \eqref{step002}, it is easy to check that
\[
\begin{aligned}
\|\bar z\|^2_{\mathcal P}&= \int_{Q} e^{2s\hat \Phi(0)}( L_{k}^{\star} \bar z)^2\,dxda dt
+\int_0^T\int_0^A \int_\omega  s^{2}\gamma^2 e^{2s\sigma}\bar  z^2\,dxdadt+\int_0^{\bar a}\int_0^1e^{2s\hat \Phi(0)}\bar z^2(T,a,x)dxda\\
&= \int_{Q} e^{-2s\hat \Phi(0)}\bar y^2\,dxda dt +
\int_0^T\int_0^A \int_\omega s^{-2} \gamma^{-2}\bar f^{2} e^{-2s\sigma} \,dx\,da dt +  \int_0^{\bar a}\int_0^1 e^{-2s\hat \Phi(0)}\bar y^2(T, a,x)dxda.
\end{aligned}
\]
Thus,
\begin{equation}\label{step004}
\begin{aligned}
&\int\!\!\!\!\!\int_{Q} e^{-2s\hat \Phi(0)}\bar y^2\,dx\,dt +
\int_0^T\int_0^A \int_\omega s^{-2} \gamma^{-2}\bar f^{2} e^{-2s\sigma} \,dx\,da dt + \int_0^{\bar a}\int_0^1 e^{-2s\hat \Phi(0)} \bar y^2(T,a,x)dxda\\
&\leq C   \Big(\|h e^{-s\Phi}\|^2_{L^2(Q)} +  \|y_0 e^{-s\hat{\Phi}(0)}\|^2_{L^2(Q_{A,1})}\Big),
\end{aligned}
\end{equation}
which implies \eqref{mainestimate_nondiv}.

It remains to check that $\bar{y}$ is the solution of \eqref{problem001_new} corresponding to $\bar{f}$.  First of all, it is immediate that $\bar{y}$ and
$\bar{f}$ belong to  $L^2(Q)$. Denote by $\tilde{y}$ the (weak) solution of \eqref{sys-nonhom}
associated to the control function $f=\bar{f}$,  where $h$ is defined as in \eqref{stimah} , then $\tilde{y}$ is also the unique solution of \eqref{sys-nonhom} defined by transposition. In other words, $\tilde{y}$ is the unique function in $L^2(Q)$ satisfying
\begin{equation}\label{trans}
\begin{aligned}
&\int_{Q} \tilde{y} g\,dx dadt + \int_0^{\bar a}\int_0^1 \tilde y(T) z(T) dxda\\
&= \int_0^T\int_0^A \int_\omega \bar{f} z\,dxda dt + \int_{Q} h z\,dxdadt + \int_0^A\int_{0}^{1}y_0 z(0)\,dx da,
\end{aligned}
\end{equation}
where $g$ is defined in \eqref{stimag} and  $z$ is the solution of
\[
\begin{cases}
\ds \frac{\partial z}{\partial t} + \frac{\partial z}{\partial a}
+(k(x)z_{x})_x-\mu(t, a, x)z + \beta(a,x) z(t,0,x) =g(t,a,x),& (t,a,x) \in  Q,
\\[5pt]
z(t,a,0)=z(t,a,1) =0, &(t,a) \in Q_{T,A},\\
z(t,A,x)=0, & (t,x) \in Q_{T,1},\\
  z(T,a,x) = 0, &(a,x) \in (0,\bar a) \times (0,1).
\end{cases}
\]
According to \eqref{step0} and \eqref{step00}, we see that $\bar{y}$ also satisfies \eqref{trans}. Therefore, $\bar{y}=\tilde{y}$. Consequently, the control $\bar{f}\in L^2(Q)$ drives the state $\bar{y}\in \mathcal Z_T$ exactly to zero at time $T$ for all $(a,x) \in (\bar a,A) \times (0,1)$.
\end{proof}
\subsection{Null controllability for \eqref{sys-memory1}}
As a consequence of Theorem \ref{Thm_nul_ctrl_nonhomog_nondiv} , we will obtain the null controllability  for the initial problem with more regular initial data. 
\begin{theorem}\label{NCinitial}
Assume  Hypotheses $\ref{ratesAss}$, $\ref{BAss01}$, $\ref{conditionbeta}$, $\ref{conditionb}$ 
 and $\bar a \le T$. Then, for any $y_0\in L^2(0,A; H^1_k(0,1))$, there exists a control function $f \in L^2(Q)$, such that the associated solution $y\in \mathcal Z_T$ of 
\eqref{sys-memory1}
satisfies 
\begin{equation}\label{ncresult}
y(T,a,x)= 0, \quad \forall \;(a,x) \in  (\bar a, A) \times(0,1).
\end{equation}
The same conclusion holds if Hypothesis $\ref{BAss01}$ is substitued by Hypothesis $\ref{BAss01'}$.
\end{theorem}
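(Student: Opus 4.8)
The plan is to realise a solution of the memory system \eqref{sys-memory1} as a fixed point of the linearised control map furnished by Theorem \ref{Thm_nul_ctrl_nonhomog_nondiv}, and to obtain that fixed point through Kakutani's theorem. Fix $s\ge\max\{\gamma_0,s_0\}$ as dictated by Hypothesis \ref{conditionb}. For $w\in L^2(Q)$ set $h_w(t,a,x):=\int_0^t b(t,\tau,a,x)w(\tau,a,x)\,d\tau$ and define the multivalued map
\[
\Lambda(w):=\Big\{\,y\in\mathcal Z_T:\ \exists\, f\in L^2(Q)\ \text{such that}\ (y,f)\ \text{solves}\ \eqref{problem001_new}\ \text{and satisfies}\ \eqref{mainestimate_nondiv}\,\Big\}.
\]
By Theorem \ref{Thm_nul_ctrl_nonhomog_nondiv} one has $\Lambda(w)\neq\emptyset$ for every $w$, and any $\bar y\in\Lambda(\bar y)$ is, together with its control, a solution of \eqref{sys-memory1} satisfying \eqref{ncresult}: indeed, taking $w=\bar y$ turns the prescribed source $h_w$ of \eqref{problem001} into the genuine memory term of \eqref{sys-memory1}. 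Hence it suffices to produce a fixed point of $\Lambda$.

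I would verify the hypotheses of Kakutani's theorem in $L^2(Q)$ endowed with the (equivalent, since $\widehat{\Phi}(0)$ is a constant) weighted norm $\|w\|_\ast^2:=\int_Q e^{-2s\widehat{\Phi}(0)}w^2\,dxdadt$. First, since \eqref{problem001_new} is linear in $(y,f)$ and both the terminal constraint $y(T)=0$ on $(\bar a,A)\times(0,1)$ and the estimate \eqref{mainestimate_nondiv} define convex closed conditions, every value $\Lambda(w)$ is convex and closed. Second, upper semicontinuity (closed graph) follows in the usual manner: if $w_n\to w$ and $y_n\in\Lambda(w_n)$ with $y_n\to y$ in $L^2(Q)$, then by \eqref{mainestimate_nondiv} the associated controls $f_n$ are bounded, so along a subsequence $f_n\rightharpoonup f$; passing to the limit in the transposition formulation of \eqref{problem001_new} and using continuous dependence on the source $h_{w_n}\to h_w$ yields $y\in\Lambda(w)$.

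The crucial point, and the one I expect to be the main obstacle, is to exhibit a closed convex bounded set $K$ with $\Lambda(K)\subseteq K$ and $\Lambda(K)$ relatively compact. Relative compactness is the easy half: by the well-posedness estimate \eqref{stimau} every $y\in\Lambda(w)$ is bounded in $\mathcal Z_T$ by the data, and, since $\partial_a y$ can be recovered from the equation, an Aubin--Lions argument together with the compact embedding $H^1_k(0,1)\hookrightarrow\hookrightarrow L^2(0,1)$ shows that $\Lambda$ sends bounded sets to relatively compact subsets of $L^2(Q)$. The delicate half is the invariance $\Lambda(K)\subseteq K$ for a ball $K=\{\,w:\|w\|_\ast\le R\,\}$: combining \eqref{mainestimate_nondiv} with the kernel estimate \eqref{dis} gives a bound of the form $\|y\|_\ast^2\le \ell\,\|w\|_\ast^2+C\,\|y_0e^{-s\widehat{\Phi}(0)}\|_{L^2(Q_{A,1})}^2$, and this is precisely the step where Hypothesis \ref{conditionb} is indispensable, since it is what renders the weighted memory source $\|h_we^{-s\Phi}\|_{L^2(Q)}$ finite and controlled by $\|w\|_\ast$. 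One then selects $R$ so that $\ell R^2+C\|y_0e^{-s\widehat{\Phi}(0)}\|^2\le R^2$; the real difficulty is to guarantee $\ell<1$ for the fixed value of $s$ forced by Hypothesis \ref{conditionb}, which I would try to secure by inserting an auxiliary exponential time weight $e^{-\lambda t}$ and exploiting the Volterra (causal) structure of $h_w$ to make the memory coefficient small.

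Once $K$ is found, Kakutani's theorem (in the form for a closed convex bounded set with relatively compact image) produces $\bar y\in\Lambda(\bar y)$, which is the desired controlled solution of \eqref{sys-memory1} satisfying \eqref{ncresult}. The case in which Hypothesis \ref{BAss01} is replaced by Hypothesis \ref{BAss01'} (degeneracy at $x=1$) is handled verbatim, simply replacing Proposition \ref{Prop_modif_Carl_estimate_new} by Proposition \ref{Prop_modif_Carl_estimate_new'} and the weights $\Phi,\sigma,\widehat{\Phi}$ by $\bar\Phi,\bar\sigma,\widehat{\bar\Phi}$ throughout.
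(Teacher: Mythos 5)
Your proposal follows essentially the same route as the paper: Kakutani's fixed point theorem applied to the multivalued map $w\mapsto\Lambda(w)$ built from Theorem \ref{Thm_nul_ctrl_nonhomog_nondiv}, on the ball $B_R=\{y\in\mathcal Z_T:\|ye^{-s\widehat{\Phi}(0)}\|_{L^2(Q)}\le R\}$, with convexity, closedness, Aubin--Lions compactness and a closed-graph/upper-semicontinuity argument exactly as you describe. The one step you leave open --- securing the invariance $\Lambda(B_R)\subseteq B_R$, for which you propose inserting an auxiliary weight $e^{-\lambda t}$ and exploiting the Volterra structure --- is closed in the paper without any extra device: estimate \eqref{dis} already gives $\|h_w e^{-s\Phi}\|_{L^2(Q)}^2\le C_T\,e^{2s\widehat{\Phi}(0)}\|we^{-s\widehat{\Phi}(0)}\|_{L^2(Q)}^2$, and since $\widehat{\Phi}(0)=\gamma(0,A)\psi(1)<0$ the coefficient multiplying $R^2$ carries the small factor $e^{2s\widehat{\Phi}(0)}$ (Hypothesis \ref{conditionb} is assumed for all $s\ge\gamma_0$, so $s$ may still be taken large at this stage), after which $R$ is chosen large enough to absorb the $y_0$ term. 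Your exponential-weight idea would also work, but it is not needed.
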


\begin{proof}
Let $\mathcal{A}(w)$ the family of all controls 
$f \in L^2(Q)$ which satisfy the estimate
\eqref{mainestimate_nondiv} and drive the solution of \eqref{problem001} to zero at time $T$ for every $(a,x) \in  (\bar a, A) \times(0,1)$. In view of
Theorem \ref{Thm_nul_ctrl_nonhomog_nondiv}, this set is not empty. Now,
fix $R>0$, let us now introduce, for every $w\in B_{R}:= \{ y\in \mathcal Z_T: \|ye^{-s\hat \Phi(0)}\|_{L^2(Q)}\le R\}$, the multivalued map
$$\Lambda: B_R \subset \mathcal Z_T\rightarrow 2^{\mathcal Z_T}$$
with
\begin{align*}
\Lambda(w)=\displaystyle\Big\{&y\in \mathcal Z_T:  (y,f) \text{ solves }\eqref{problem001} \; \text{with} \; f\in\mathcal{A}(w) \Big\}.
\end{align*}\;

Observe that if $y\in \Lambda (w)$, then $y(T, a,x)=0$ for all $(a,x) \in (\bar a, A) \times (0,1)$ and
$f$ satisfies
\[\int_0^T\int_0^A \int_\omega s^{-2} \gamma^{-2} f^{2} e^{-2s\sigma(t,x)}\,dx\,da dt 
\leq C\Big(R^2+ \int_0^A\int_{0}^{1}e^{-2s\hat{\Phi}(0)}y_0^2 \,dxda\Big)\]
thanks to Theorem \ref{Thm_nul_ctrl_nonhomog_nondiv}. Recall that $w\in B_R$.

To achieve our goal, it will suffice to show that $\Lambda$ possesses at least one fixed point.
To this purpose, we shall apply Kakutani's fixed point Theorem (see \cite[Theorem 2.3]{cara}).


First of all, it is readily seen that $\Lambda(w)$ is a nonempty, closed and convex subset of $L^2(Q)$
for every $w\in B_R$. Then, we prove that $\Lambda(B_R)\subset B_R$ with sufficiently large $R>0$.
By \eqref{mainestimate_nondiv} and \eqref{dis}, we have

\begin{equation}\label{starstima}
\begin{aligned}
&\int_{Q}y^2e^{-2s\hat \Phi(0)}\,dx\,dt +
\int_0^T\int_0^A \int_\omega s^{-2} \gamma^{-2} f^{2} e^{-2s\sigma(t,x)}\,dx\,da dt + \int_0^{\bar a}\int_0^1e^{-2s\hat \Phi(0)} y^2_T(a,x)dxda\\
&\leq C   \Big(\|h e^{-s\Phi}\|^2_{L^2(Q)} +  \|y_0 e^{-s\hat{\Phi}(0)}\|^2_{L^2(Q_{A,1})}\Big)\\
&\leq   C \Big( e^{2s\hat \Phi(0)}R^2+ \int_0^A\int_0^1 e^{-2s\hat{\Phi}(0)}y_0^2\,dxda\Big)\le R^2
\end{aligned}
\end{equation}
 for $R$ large enough, we obtain
\begin{equation*}
\int_{Q}e^{-2s\hat \Phi(0)}y^2\,dxda dt
\leq R^2.
\end{equation*}

It follows that $\Lambda(B_R) \subset B_R$.
Furthermore, let $\{w_n\}$ be a sequence of $B_R$. The regularity assumption on $y_0$
and Theorem \ref{theorem_existence}, imply that the associated solutions $\{y_n\}$ are bounded in  $\mathcal Z_T$.
Therefore, $\Lambda(B_R)$ is a relatively compact subset of $L^2(Q)$ by
the Aubin-Lions Theorem \cite{simon}.

In order to conclude, we have to prove that $\Lambda$ is upper-semicontinuous under the $L^2(Q)$-topology.
First, observe that for any $w\in B_R$, we have at least $f \in L^2(Q)$ such that the corresponding solution $y\in B_R$.
Hence, taking $\{w_n\}$ a sequence in $B_R$, we can find a sequence of controls $\{f_n\}$ such that the corresponding solutions
$\{f_n\}$ is in $L^2(Q)$. Thus, let $\{w_n\}$ be a sequence satisfying $w_n \rightarrow w$ in $B_R$ and
$y_n \in \Lambda(w_n)$ such that $y_n \rightarrow y$ in $L^2(Q)$. We must prove that
$y \in \Lambda(w)$.
For every $n$, we have a control $f_n \in L^2(Q)$ such that the system
\begin{equation}\label{sys_n}
\left\{
\begin{array}{lll}
\displaystyle y_{n,t}  + y_{n,a}- (k(x)  y_{n,x} )_x + \mu(t,a,x) y_n= \int\limits_0^t b(t,s,a,x)  w_n(s,a,x) \, ds + f_n(t,a,x) \chi_\omega & \text{in } Q, \\
y_n(t,a,1)= y_n(t,a,0)=0, & \text{in } Q_{T,A},  \\
y_n(0,a,x)= y_{0}(a,x),  &  \text{in }  Q_{A,1},\\
y_n(t,0,x)=\int_0^A \beta(a,x) y_n(t,a,x) da, & \text{in } Q_{T,1},\\
\end{array}
\right.
\end{equation}
has a least one solution $y_n\in L^2(Q)$ that satisfies
\begin{equation*}
y_n(T,a,x)=0 \qquad \forall \; (a,x) \in (\bar a ,A)\times (0,1).
\end{equation*}
From Theorem \ref{theorem_existence} and \eqref{starstima}, it follows (at least for a subsequence) that
\begin{align*}
 f_n \rightarrow f \quad &\text{weakly in} \; L^2(Q),\notag \\
 y_n \rightarrow y \quad &\text{weakly in} \; \mathcal Z_T,\\
 &\text{strongly in} \; C(0, T;  L^2(Q_{A,1})).
\end{align*}
Passing to the limit in \eqref{sys_n}, we obtain a control $f \in L^2(Q)$ such that the corresponding solution $y$ to \eqref{problem001}
satisfies \eqref{ncresult}. This shows that $y\in \Lambda(w)$ and, therefore, the map $\Lambda$ is upper-semicontinuous.

Hence, the multivalued map $\Lambda$ possesses at least one fixed point, i.e., there exists $y \in \mathcal Z_T$ such that $y\in \Lambda(y)$.
By the definition of $\Lambda$, this implies that there exists at least one pair $(y,f)$ satisfying the conditions of the theorem.
The uniqueness of $y$ follows by Theorem \ref{theorem_existence}.
Thus the proof is complete.

\end{proof}

Clearly, Theorem  \ref{NCinitial}  holds also in a general domain $(t^*,T) \times (0,1)$ with suitable changes. Thanks to this fact, the following null controllability result holds for memory system \eqref{sys-memory1}. 
\begin{theorem}\label{Thm_null_Control_memo_2}
Assume  Hypotheses $\ref{ratesAss}$, $\ref{BAss01}$, $\ref{conditionbeta}$, $\ref{conditionb}$ 
 and $\bar a \le T$. Then, for any $y_0\in L^2(Q_{A,1})$, there exists a control function $f \in L^2(Q)$, such that the associated solution $y\in \mathcal W_T$ of 
\eqref{sys-memory1}
satisfies  \eqref{ncresult}.
The same conclusion holds if Hypothesis $\ref{BAss01}$ is substitued by Hypothesis $\ref{BAss01'}$.
\end{theorem}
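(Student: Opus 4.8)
The plan is to deduce this from Theorem~\ref{NCinitial}, whose only additional hypothesis is the higher regularity $y_0\in L^2(0,A;H^1_k(0,1))$. The entire task is therefore to relax the regularity of the initial datum from $L^2(0,A;H^1_k(0,1))$ down to $L^2(Q_{A,1})$, exploiting the smoothing effect of the uncontrolled equation together with the fact, observed just before the statement, that Theorem~\ref{NCinitial} remains valid on any cylinder $(t^*,T)\times(0,A)\times(0,1)$.

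First I would regularize the datum by letting the system evolve with no control. Given $y_0\in L^2(Q_{A,1})$, Theorem~\ref{prop-Well-posed_memory} provides a unique solution $y$ of \eqref{sys-memory1} with $f\equiv 0$ belonging to $\mathcal W_T$. In particular $y\in L^2\big(0,T;L^2(0,A;H^1_k(0,1))\big)$, so the map $t\mapsto \|y(t)\|^2_{L^2(0,A;H^1_k(0,1))}$ is integrable on $(0,T)$ and hence finite for a.e.\ $t$. Assuming $\bar a<T$ (the borderline case $\bar a=T$ being recovered by the same argument after an arbitrarily small reduction of $T$), I may thus fix some $t^*\in(0,T-\bar a)$ for which $y_1:=y(t^*)\in L^2(0,A;H^1_k(0,1))$.

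Next I would restart the clock at $t^*$ and invoke the general-domain version of Theorem~\ref{NCinitial} on $(t^*,T)\times(0,A)\times(0,1)$, which is legitimate because $\bar a\le T-t^*$ and $y_1$ now carries the required $H^1_k$-regularity. This yields a control $f_1\in L^2\big((t^*,T)\times Q_{A,1}\big)$ whose associated solution $\tilde y$ (lying in $\mathcal Z_T$ over $(t^*,T)$) satisfies $\tilde y(T,a,x)=0$ for all $(a,x)\in(\bar a,A)\times(0,1)$. The one point requiring care is the memory term: for $t\in(t^*,T)$ one splits $\int_0^t b\,y\,ds=\int_0^{t^*}b\,y\,ds+\int_{t^*}^t b\,\tilde y\,ds$, the first summand being a \emph{known} source produced by the free evolution of the previous step. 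This source is absorbed into the nonhomogeneous term of the intermediate problem \eqref{problem001} and is therefore handled by the Carleman-plus-Kakutani machinery exactly as in the proofs of Theorem~\ref{Thm_nul_ctrl_nonhomog_nondiv} and Theorem~\ref{NCinitial}; it is here that the restriction on $b$ in Hypothesis~\ref{conditionb}, relative to the final time $T$, is used.

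Finally I would glue the two pieces: set $f:=0$ on $(0,t^*)$ and $f:=f_1$ on $(t^*,T)$, and correspondingly take $y$ equal to the free solution on $(0,t^*)$ and to $\tilde y$ on $(t^*,T)$. Since the two pieces share the common trace $y_1$ at $t=t^*$ and satisfy the same renewal and boundary conditions, the glued function $y$ belongs to $\mathcal W_T$, solves \eqref{sys-memory1} with control $f\in L^2(Q)$, and satisfies \eqref{ncresult}; uniqueness follows from Theorem~\ref{theorem_existence}. The case in which Hypothesis~\ref{BAss01} is replaced by Hypothesis~\ref{BAss01'} is identical, using Proposition~\ref{Prop_modif_Carl_estimate_new'} in place of Proposition~\ref{Prop_modif_Carl_estimate_new}. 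I expect the genuine obstacle to be the honest bookkeeping of the memory across the time split — ensuring that the history contribution on $(0,t^*)$ enters the restarted problem as an admissible source compatible with Hypothesis~\ref{conditionb} — rather than the smoothing step, which is immediate from the $\mathcal W_T$-regularity.
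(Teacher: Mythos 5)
Your proposal is correct and is essentially the argument the paper intends: the paper omits the proof, referring to \cite[Theorem 5]{Allal2020}, and the remark immediately preceding the statement (that Theorem \ref{NCinitial} holds on a general cylinder $(t^*,T)\times(0,A)\times(0,1)$) exists precisely to enable your regularize--restart--glue strategy, including the treatment of the history part of the memory integral as a known source. The only point I would flag is your parenthetical fix for the borderline case $\bar a=T$: shrinking the horizon to $T'<T$ would violate the constraint $\bar a\le T'-t^*$ needed on the restarted cylinder, so that case is not actually recovered by "the same argument" — though the paper itself is silent on this edge case.
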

The proof is similar to the one of \cite[Theorem 5]{Allal2020} or of \cite[Theorem 3.2]{AFS2020}, so we omit it.

\subsection{Null controllability in the case  $k(0)=k(1)=0$}
In this subsection we will extend the null controllability result proved above to the degenerate parabolic equation 
\eqref{sys-memory1}
when $k$ vanishes at both extremities of the interval $(0, 1)$ and satisfies, as in \cite{Allal2020} in the case that $y$ is independent of $a$, one of the four following cases:
\begin{itemize}
\item weakly-weakly degenerate case (WWD): 
\begin{equation*}
\left\{
\begin{array}{lll}
k \in C([0,1]) \cap C^1((0,1)), \; k(0)=k(1)=0, \; k>0 \quad \text{in}\quad (0, 1), \\
\exists \, M_1 \in [0,1),\text{ s.t. }x k'(x) \leq M_1 k(x)\;\; \text{for a.e.} \, x \in [0,1],\\
\exists \; {M_2} \in [0,1),\text{ s.t. } (x-1) k'(x) \leq M_2 k(x)\;\;\text{for a.e.}\; x \in [0,1],
\end{array}
\right.
\end{equation*}
\item strongly-weakly degenerate case (SWD):
\begin{equation*}
\left\{
\begin{array}{lll}
k \in C([0,1]) \cap C^1([0,1)), \; k (0)=k (1)=0, \; k>0 \quad \text{in}\quad (0, 1), \\
\exists \,M_1\in [1,2), \text{ s.t. } x k'(x) \leq M_1 k (x) \;\;\text{for a.e.}\, x \in [0,1] \text{ and } \exists \;\theta \in (0,M_1]  \text{ s.t. }\\
\quad x \ds \mapsto \frac{k(x)}{x^\theta} \text{ in nondecreasing near } 0,\\
\exists \; {M_2} \in [0,1),  \text{ s.t. } M_2(x-1) k'(x) \leq M_2 k(x)\;\; \text{for a.e.} \; x \in [0,1],
\end{array}
\right.
\end{equation*}
\item weakly-strongly degenerate case (WSD): 
\begin{equation*}
\left\{
\begin{array}{lll}
k \in C([0,1]) \cap C^1((0,1]), \; k(0)=k(1)=0, \; k>0 \quad \text{in}\quad (0, 1), \\
\exists \, M_1 \in [0,1),\text{ s.t. }\quad x k'(x) \leq M_1 k(x)\; \;\text{for a.e.}\, x \in [0,1],\\
\exists \,M_2\in [1,2), \text{ s.t. } (x-1) k'(x) \leq M_2 k (x)\;\; \text{for a.e.}\, x \in [0,1] \text{ and } \exists \;\theta \in (0,M_2]  \text{ s.t. }\\
\quad x \ds \mapsto \frac{k(x)}{|1-x|^\theta} \text{ in nondecreasing near } 1,\\
\end{array}
\right.
\end{equation*}
\item strongly-strongly degenerate case (SSD): 
\begin{equation*}
\left\{
\begin{array}{lll}
k \in C^1([0,1]), \; k(0)=k(1)=0, \; a>0 \quad \text{in}\quad (0, 1), \\
\exists \,M_1\in [1,2), \text{ s.t. } x k'(x) \leq M_1 k (x) \;\; \text{for a.e.}\, x \in [0,1] \text{ and } \exists \;\theta \in (0,M_1]  \text{ s.t. }\\
\quad x \ds \mapsto \frac{k(x)}{x^\theta} \text{ in nondecreasing near } 0,\\
\exists \,M_2\in [1,2), \text{ s.t. } (x-1) k'(x) \leq M_2 k (x)\;\; \text{for a.e.} \, x \in [0,1] \text{ and } \exists \;\theta \in (0,M_2]  \text{ s.t. }\\
\quad x \ds \mapsto \frac{k(x)}{|1-x|^\theta} \text{ in nondecreasing near } 1,\\
\end{array}
\right.
\end{equation*}
\end{itemize}
A typical example is $ k(x) = x^{M_1} (1 - x)^{M_2}, \text{with} \; M_1, M_2 \in [0,2)$.

We remember that in all the previous cases \eqref{sys-memory1} is still  well-posed via Theorem \ref{prop-Well-posed_memory}.  As in \cite{Allal2020},  as a consequence of Theorem \ref{Thm_null_Control_memo_2}, one can prove the null controllability for the initial problem when $k$ vanishes at both extremities of the interval $(0, 1)$.
\begin{theorem}\label{Thm_null_Control_memo_4}
Assume that $k$ is $(WWD)$, $(SSD)$, $(WSD)$ or $(SWD)$,  Hypotheses $\ref{ratesAss}$, $\ref{conditionbeta}$, $\ref{conditionb}$ 
 and $\bar a \le T$. Then, for any $y_0\in L^2(Q_{A,1})$, there exists a control function $f \in L^2(Q)$, such that the associated solution $y\in \mathcal W_T$ of 
\eqref{sys-memory1}
satisfies 
\eqref{ncresult}.
\end{theorem}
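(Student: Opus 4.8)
The plan is to deduce the two-point degeneracy result from the one-sided Theorem \ref{Thm_null_Control_memo_2} by a localization-and-gluing argument, exploiting the fact that the control region $\omega=(\alpha,\rho)$ is strictly interior to $(0,1)$ and bounded away from both degeneracy points $x=0$ and $x=1$. The idea is to split the space interval into two overlapping pieces, each carrying a single degeneracy, solve the memory control problem on each piece by the already-established one-sided theorem, and then recombine the two states and controls through a partition of unity whose transition region sits inside $\omega$.

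More precisely, I would fix $\lambda_1,\lambda_2$ with $\alpha<\lambda_1<\lambda_2<\rho$ and set $I_1:=(0,\lambda_2)$, $I_2:=(\lambda_1,1)$, so that $I_1\cup I_2=(0,1)$ and the overlap $(\lambda_1,\lambda_2)$ is compactly contained in $\omega$. On $I_1$ the coefficient $k$ is positive on $(0,\lambda_2]$ and vanishes only at $0$; the (WWD),(SWD),(WSD),(SSD) clauses guarantee precisely the growth bound $xk'\le M_1k$ and, when $M_1\ge1$, the monotonicity of $x\mapsto k(x)/x^\theta$ near $0$, i.e. Hypothesis \ref{BAss01} rescaled to $I_1$. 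Symmetrically, on $I_2$ the coefficient vanishes only at $1$ and satisfies Hypothesis \ref{BAss01'}. Applying Theorem \ref{Thm_null_Control_memo_2} on each subinterval, with homogeneous Dirichlet data at the interior endpoint ($x=\lambda_2$ on $I_1$, $x=\lambda_1$ on $I_2$), with initial datum $y_0$ restricted to the subinterval and with a control supported in a subinterval of $\omega\cap I_j$, I obtain pairs $(y_1,f_1)$ and $(y_2,f_2)$, each solving \eqref{sys-memory1} on its subdomain and driving the state to zero at $t=T$ for all $a\in(\bar a,A)$.

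Next I would take a smooth partition of unity $\{\eta_1,\eta_2\}$ on $[0,1]$ subordinate to $\{I_1,I_2\}$, namely $\eta_1+\eta_2\equiv1$, $\supp\eta_j\subset I_j$, and crucially $\supp\eta_j'\subset(\lambda_1,\lambda_2)\subset\subset\omega$, and set $y:=\eta_1y_1+\eta_2y_2$. Since the $\eta_j$ depend on $x$ only, they commute with $\partial_t$, $\partial_a$ and with multiplication by $\mu$; the memory integral acts through the $(t,s)$ variables, so $\int_0^t b\,y\,ds=\eta_1\int_0^t b\,y_1\,ds+\eta_2\int_0^t b\,y_2\,ds$, and the age-integral in the renewal condition factors the same way. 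Hence $y$ inherits the lateral conditions $y(t,a,0)=y(t,a,1)=0$, the birth condition $y(t,0,x)=\int_0^A\beta y\,da$, the initial datum $y(0,\cdot)=y_0$ and the final condition \eqref{ncresult}. Computing $(k(\eta_1y_1+\eta_2y_2)_x)_x$, the only terms not reproducing $\eta_jL_ky_j$ are commutators carrying a factor $\eta_j'$ or $\eta_j''$, and these are supported in $(\lambda_1,\lambda_2)\subset\omega$. Collecting them together with $\eta_1f_1+\eta_2f_2$ yields a single $f\in L^2(Q)$ supported in $\omega$ for which $y$ solves \eqref{sys-memory1} and satisfies \eqref{ncresult}; existence and uniqueness of $y\in\mathcal W_T$ are guaranteed by Theorem \ref{prop-Well-posed_memory}.

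The step I expect to be the most delicate is checking that Hypothesis \ref{conditionb} on the kernel actually furnishes the kernel decay needed to invoke Theorem \ref{Thm_null_Control_memo_2} on each subinterval. The weight $p(x)=\int_0^x y/k(y)\,dy$ entering \ref{conditionb} is finite near $0$ but diverges near $1$ whenever $M_2\ge1$, so in the (WSD) and (SSD) cases the global constant $\|p\|_{L^\infty(0,1)}$ is not available and the condition must be read locally: on $I_1$ one uses $p$ over $(0,\lambda_2)$, while on $I_2$ one uses the mirror weight $\bar p$ over $(\lambda_1,1)$ adapted to the degeneracy at $1$. The point is that near each endpoint only the corresponding weight is active, and on the overlap $(\lambda_1,\lambda_2)$, where $k$ is bounded below, both $p$ and $\bar p$ are bounded; thus the exponential admissibility of $b$ required on $I_1$ and on $I_2$ follows from \eqref{conditionb1}-type decay up to harmless constants depending on $\|p\|_{L^\infty(0,\lambda_2)}$ and $\|\bar p\|_{L^\infty(\lambda_1,1)}$. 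Once this is settled the remaining verifications are routine, and the scheme is exactly the one used for the age-independent problem in \cite{Allal2020}.
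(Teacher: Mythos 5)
Your localization-and-gluing argument is exactly the route the paper intends: Theorem \ref{Thm_null_Control_memo_4} is stated there without a written proof, with a pointer to \cite{Allal2020} and to the one-sided result of Theorem \ref{Thm_null_Control_memo_2}, and the argument behind that reference is precisely your overlapping decomposition $I_1=(0,\lambda_2)$, $I_2=(\lambda_1,1)$ with a partition of unity whose transition zone sits inside $\omega$, the commutator terms (supported where $k$ is bounded below) being absorbed into the control. Your observation that Hypothesis \ref{conditionb} must be read with the local quantities $\|p\|_{L^\infty(0,\lambda_2)}$ and $\|\bar p\|_{L^\infty(\lambda_1,1)}$ in the (WSD)/(SSD) cases, since $\|p\|_{L^\infty(0,1)}$ is infinite when $M_2\ge 1$, is a correct and necessary clarification that the paper leaves implicit.
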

\section{Appendix} \label{appendix}
\subsection{Proof of Theorem \ref{theorem_existence}}\label{proof_existence} Since $\mathcal A$ is the infinitesimal generator of a strongly continuous semigroup on $L^2(Q_{A,1})$, it follows that if $y_0 \in L^2(Q_{A,1})$, then \eqref{sys-nonhom} has a unique solution $y \in C([0,T]; L^2(Q_{A,1}))$. Moreover, if $y_0 \in L^2(0,A; H^1_k(0,1))$, then $y \in L^2(0, T; L^2(0,A; H^1_k(0,1)))\cap H^1(0,T; L^2(Q_{A,1}))$. 
Multiplying the equation of \eqref{sys-nonhom} by $y$ and  integrating over $(0,A) \times (0,1)$, we obtain
\[
\begin{aligned}
&\frac{1}{2}\frac{d}{dt}\|y(t)\|^2_{L^2(Q_{A,1})}+ \frac{1}{2}\int_0^1y^2(t,A,x)dx -\frac{1}{2}\int_0^1y^2(t,0,x)dx +\int_0^A\int_0^1ky_x^2dxda \\
&=-\int_0^A\int_0^1\mu y^2dxda + \int_0^A\int_\omega fy dx da + \int_0^A\int_0^1 hy dx da.
\end{aligned}
\]
Hence, using the initial condition $y(t,0,x) = \int_0^A \beta(a,x) y(t,a,x) da$, the assumptions on $\beta$ and $\mu$ and the Jensen's inequa\-li\-ty, one has
\begin{equation}\label{stima1}
\begin{aligned}
&\frac{1}{2}\frac{d}{dt}\|y(t)\|^2_{L^2(Q_{A,1})} + \frac{1}{2}\int_0^1y^2(t,A,x)dx  +\int_0^A\int_0^1ky_x^2dxda \\
&\le \frac{C}{2}\int_0^A\int_0^1y^2(t,a,x)dxda + \frac{1}{2} \int_0^A\int_0^1\chi_\omega f^2 dx da+ \frac{1}{2} \int_0^A\int_0^1 h^2 dx da,
\end{aligned}
\end{equation}
where $C$ is a positive constant.
Since $\int_0^1y^2(t,A,x)dx $ and $\int_0^A\int_0^1ky_x^2dxda$ are positive, we deduce
\[
\frac{d}{dt}\|y(t)\|^2_{L^2(Q_{A,1})} \le C\|y(t)\|^2_{L^2(Q_{A,1})} + \|\chi_\omega f(t)\|^2_{L^2(Q_{A,1})}+  \|h(t)\|^2_{L^2(Q_{A,1})}.
\]
Setting $F(t):= \|y(t)\|^2_{L^2(Q_{A,1})} $ and multiplying the previous inequality by $e^{-Ct}$, one has
\begin{equation}\label{stima2}
\frac{d}{dt}(e^{-Ct}F(t)) \le  e^{-Ct}(\|\chi_\omega f(t)\|^2_{L^2(Q_{A,1})}+\|h(t)\|^2_{L^2(Q_{A,1})}).
\end{equation}
Integrating \eqref{stima2} over $(0,t)$, for all $t \in [0,T]$ it follows
\[
e^{-Ct}F(t) \le F(0) + \int_0^te^{-C\tau}(\|\chi_\omega f(\tau)\|^2_{L^2(Q_{A,1})}+\|h(\tau)\|^2_{L^2(Q_{A,1})})d\tau.
\]
Hence, for all $t \in [0,T]$,
\[
F(t) \le e^{CT}\left( F(0) + \int_0^T(\|\chi_\omega f(\tau)\|^2_{L^2(Q_{A,1})}+\|h(\tau)\|^2_{L^2(Q_{A,1})})d\tau\right)
\]
and
\begin{equation}\label{stima3}
\sup_{t \in [0,T] } \|y(t)\|^2_{L^2(Q_{A,1})} \le C\left(\|y_0\|^2_{L^2(Q_{A,1})}  + \|f\chi_\omega\|^2_{L^2(Q)}+ \|h\|^2_{L^2(Q)}\right).
\end{equation}
Therefore, by \eqref{stima1}, it follows
\[
\begin{aligned}
\frac{1}{2}\frac{d}{dt}\|y(t)\|^2_{L^2(Q_{A,1})} +\int_0^A\int_0^1ky_x^2dxda& \le \frac{C}{2}\int_0^A\int_0^1y^2(t,a,x)dxda + \frac{1}{2} \int_0^A\int_0^1 \chi_\omega f^2 dx da\\
& +  \frac{1}{2} \int_0^A\int_0^1 h^2 dx da.
\end{aligned}
\]
Integrating over $(0,T)$, we have
\begin{equation}\label{stima3'}
\begin{aligned}
&\frac{1}{2}\|y(T)\|^2_{L^2(Q_{A,1})} +\int_0^T\int_0^A\int_0^1ky_x^2dxdadt\le\frac{1}{2}\|y_0\|^2_{L^2(Q_{A,1})}+ \frac{C}{2}\int_0^T\int_0^A\int_0^1y^2(t,a,x)dxdadt\\
& + \frac{1}{2} \int_0^T\int_0^A\int_0^1 \chi_\omega f^2 dx dadt+\frac{1}{2} \int_0^T\int_0^A\int_0^1 h^2 dx dadt.
\end{aligned}
\end{equation}
Hence, by \eqref{stima3'},
\begin{equation}\label{stima4}
\begin{aligned}
\int_0^T\int_0^A\|\sqrt{k}y_x\|^2_{L^2(0,1)}dadt&\le\|y_0\|^2_{L^2(Q_{A,1})}+ C\int_0^T\|y(t)\|^2_{L^2(Q_{A,1})}dt+ \|\chi_\omega f\|^2_{L^2(Q)}+ \|h\|^2_{L^2(Q)}\\
&\le C\left(  \|y_0\|^2_{L^2(Q_{A,1})}  +\|\chi_\omega f\|^2_{L^2(Q
)} +\|h\|^2_{L^2(Q
)} 
\right)
\end{aligned}
\end{equation}
and \eqref{stimau} follows by \eqref{stima3} and \eqref{stima4} if $y_0 \in L^2(0,A; H^1_k(0,1))$. 

Using the density of $L^2(0,A; H^1_k(0,1))$ in $L^2(Q_{A,1})$, one can prove that \eqref{stimau} holds also if $y_0 \in L^2(Q_{A,1})$.
\subsection{Proof of Lemma \ref{lemma_caccio}}
This section is devoted to the proof of Caccioppoli's inequality \eqref{inequality_caccio}.

Let us consider a smooth function $\xi:[0,1] \rightarrow \mathbb{R}$ defined as follows
$$
\left\{\begin{array}{ll}
0 \leq \xi(x) \leq 1, & \text { for all } x \in[0,1] \\
\xi(x)=1, & x \in \omega^{\prime} \\
\xi(x)=0, & x \in(0,1) \backslash \omega
\end{array}\right.
$$
Since $z$ solves \eqref{adjoint}, then integrating by parts we obtain
\begin{align*}
0 &=\int_{0}^{T} \frac{d}{d t}\left(\int_{0}^{A} \int_{0}^{1}\left(\xi e^{s \psi}\right)^{2} z^{2} d x d a\right) d t \\
&=\int_{Q} 2 s \psi_{t}\left(\xi e^{s \psi}\right)^{2} z^{2}+2\left(\xi e^{s \psi}\right)^{2} z\left(-z_{a}-\left(k z_{x}\right)_{x}+\mu z+ g\right) d x d a d t \\
&=2 s \int_{Q} \psi_{t}\left(\xi e^{s \psi}\right)^{2} z^{2} d x d a d t+2 s \int_{Q} \psi_{a}\left(\xi e^{s \psi}\right)^{2} z^{2} d x d a d t - \int_{Q}\left( k (\xi^{2} e^{2 s \psi})_{x}\right)_x  z^2 d x d a d t \\
&+2 \int_{Q}\left(\xi^{2} e^{2 s \psi} k\right) z_{x}^{2} d x d a d t+2 \int_{Q} \xi^{2} e^{2 s \psi} \mu z^{2} d x d a d t+2 \int_{Q} \xi^{2} e^{2 s \psi} g z d x d a d t .
\end{align*}

Therefore,
\begin{align*}
2 \int_{Q} \xi^{2} e^{2 s \psi} k z_{x}^{2} d x d a d t &=-2 s \int_{Q} \psi_{t}\left(\xi e^{s \psi}\right)^{2} z^{2} d x d a d t-2 s \int_{Q} \psi_{a}\left(\xi e^{s \psi}\right)^{2} z^{2} d x d a d t \\
&+ \int_{Q}\left( k (\xi^{2} e^{2 s \psi})_{x}\right)_x  z^2 d x d a d t-2 \int_{Q} \xi^{2} e^{2 s \psi} \mu z^{2} d x d a d t \\
&-2 \int_{Q} \xi^{2} e^{2 s \psi} g z d x d a d t.
\end{align*}
Hence, taking into account the definition of $\xi$ and the fact that $|\Theta_a| \leq C \Theta^2$, $|\Theta_t| \leq C \Theta^2$, $k \in C^1(\overline{\omega'})$, $\inf\limits_{x\in \omega'} k(x) > 0$ and $\phi \in C^2(\overline{\omega'})$, and applying Young's inequality, we infer that

\begin{align*}
\inf _{\omega^{\prime}}\{k\} \int_{0}^{T} & \int_{0}^{A} \int_{\omega^{\prime}} e^{2 s \psi} z_{x}^{2} d x d a d t  \leq  C s  \int_{0}^{T} \int_{0}^{A} \int_{\omega} \Theta^2 e^{2s \psi} z^{2} d x d a d t \\
&+  C \int_{0}^{T} \int_{0}^{A} \int_{\omega} s^2 \Theta^2 e^{2s \psi} z^{2} d x d a d t+  \int_{0}^{T} \int_{0}^{A} \int_{\omega}  e^{2 s \varphi}  z_{x}^{2} d x d a d t \\
&+\|\mu\|_{L^{\infty}(Q)} \int_{0}^{T} \int_{0}^{A} \int_{\omega} e^{2s \psi} z^{2} d x d a d t+ \int_{0}^{T} \int_{0}^{A} \int_{\omega} e^{2 s \psi} g^{2} d x d a d t \\
& \leq C \int_{0}^{T} \int_{0}^{A} \big( g^2 + s^2 \Theta^2  z^{2}\big) e^{2s \psi} d x d a d t ,
\end{align*}
from which the conclusion follows.


\end{document}